\definecolor{refkey}{gray}{.75}
\definecolor{labelkey}{gray}{.5}
\newtheorem{Theorem}{Theorem}[section]
\newtheorem{Lemma}[Theorem]{Lemma}
\newtheorem{Proposition}[Theorem]{Proposition}
\newtheorem{Remark}[Theorem]{Remark}
\newtheorem{Claim}[Theorem]{Claim}
\newtheorem{Assumption}[Theorem]{Assumption}
\newtheorem{Example}[Theorem]{Example}
\definecolor{light}{gray}{.9}
\newcommand{\cA}{\ensuremath{\mathcal A}}
\newcommand{\cF}{\ensuremath{\mathcal F}}
\newcommand{\cG}{\ensuremath{\mathcal G}}
\newcommand{\E}{{\ensuremath{\mathbb E}} }
\newcommand{\bbE}{{\ensuremath{\mathbb E}} }
\newcommand{\N}{{\ensuremath{\mathbb N}} }
\renewcommand{\P}{{\ensuremath{\mathbb P}} }
\newcommand{\bbP}{{\ensuremath{\mathbb P}} }
\newcommand{\Q}{{\ensuremath{\mathbb Q}} }
\newcommand{\bbQ}{{\ensuremath{\mathbb Q}} }
\newcommand{\R}{{\ensuremath{\mathbb R}} }
\newcommand{\bbR}{{\ensuremath{\mathbb R}} }
\newcommand{\Z}{{\ensuremath{\mathbb Z}} }
\newcommand{\bbZ}{{\ensuremath{\mathbb Z}} }
\newcommand{\e}{{\rm e} }
\let\a=\alpha    \let\d=\delta  \let\ep=\varepsilon
 \let\g=\gamma       \let\l=\lambda
      \let\o=\omega      
  \let\s=\sigma \let\t=\tau   
\let\D=\Delta     \let\L=\Lambda 
\let\O=\Omega 
\definecolor{darkgreen}{rgb}{0,0.6,0}
\newcommand{\mic}{\textcolor{black}}
\newcommand{\ale}{\textcolor{black}}
\title[Regularity of biased 1D random walks  in random environment]{Regularity of biased 1D random walks in random environment}
\author[A.\ Faggionato]{Alessandra Faggionato}
\address{Alessandra Faggionato.
  Dipartimento di Matematica, Universit\`a di Roma ``La Sapienza''.
 P.le Aldo Moro 2, 00185 Roma, Italy}
\email{faggiona@mat.uniroma1.it}
\author[M.\ Salvi]{Michele Salvi}
\address{Michele Salvi.  {D\'epartement de Math\'ematiques Appliqu\'ees, \'Ecole Polytechnique, Route de Saclay, 91128 Palaiseau Cedex, France
and
INRA, Unit\'e MaIAGE, 78352 Jouy-en-Josas Cedex, France}}
\email{michele.salvi@polytechnique.edu}
\begin{document}
\begin{abstract} 
We study the asymptotic properties of nearest-neighbor random walks in 1d random environment under the influence of an external field of intensity $\l\in\R$. For ergodic shift-invariant environments, we show that the limiting velocity $v(\l)$ is always increasing and that it is everywhere analytic except at most in two points $\l_-$ and $\l_+$. When $\l_-$ and $\l_+$ are distinct, $v(\l)$ might fail to be continuous. We refine the assumptions in \cite{Z} for having a recentered CLT with diffusivity $\sigma^2(\l)$ and give explicit conditions for $\sigma^2(\l)$ to be analytic. For the random conductance model we show that, in contrast with the deterministic case, $\sigma^2(\l)$ is not monotone on the positive (resp.~negative) half-line and that it is not differentiable at $\l=0$. For this model we also prove the Einstein Relation, both in discrete and continuous time,  extending the result of  \cite{LD16}.

\bigskip

\noindent  \emph{AMS  subject classification (2010 MSC)}: 
60K37, 
60Fxx, 
82D30.

\smallskip

\noindent
\emph{Keywords}: random walk in random environment, asymptotic speed, central limit theorem, random conductance model, environment seen from the particle, steady states, Einstein relation.

\thanks{The present work was financially supported  by  PRIN 20155PAWZB "Large Scale Random Structures" and the European Union's Horizon 2020 research and innovation programme under the Marie Sklodowska-Curie Grant agreement No 656047.
 }
\end{abstract}


\bigskip

\maketitle

\section{Introduction}
The response of a system to an external field of intensity $\l$ is relevant in many applications. In particular, one is interested in the quantitative and qualitative behavior of some large-scale quantities when $\l$ varies. As an example we mention linear response theory, where the first order $\l$--expansion of the observed quantities is analyzed (see e.g.~\cite{KTH,R09}).

The above issues have been considered both for dynamical systems and for stochastic systems. 
For stochastic systems whose evolution depends on a random environment (modeling some structural disorder), one could further ask how the disorder influences the response. 
Here we consider the special case of 1d nearest--neighbor RWRE's, and focus on the $\l$--dependence of the asymptotic velocity $v(\l)$ and  the diffusion coefficient $\s^2(\l)$. Some non--rigorous results in this direction are provided in \cite{NJB1}. According to \cite{NJB1}, differently from  the higher dimensional case treated in \cite{NJB2}, the presence of disorder in one dimension can make $v(\l)$ and $\s^2(\l)$ irregular. This picture is confirmed by some of our rigorous results.

In this paper we investigate  the  behavior of the quantities $v(\l)$ and $\sigma^2(\l)$ as functions of the parameter $\l$. In particular, we focus on their monotonicity, differentiability and analyticity, and we derive the Einstein Relation for the random conductance model (RCM) extending the result of \cite{LD16}.  
The advantage of working with nearest neighbor walks on the one-dimensional lattice is that $v(\l)$ and $\s^2(\l)$ have an explicit representation in terms of suitable series (see \cite{Z}). 

\smallskip

Before entering in the details of our results we describe some previous contributions on related problems. 
The  monotone behavior of the speed of RWRE's in  dimension $d\geq 2$ has been considered in several papers. One of the most interesting and most studied models is that of a walk on the infinite supercritical percolation cluster, where the speed has been proved to be positive up to a critical value of $\l$ and equal to zero above this threshold \cite{BGP03, FH13}. This non-monotone nature of the speed as a function of the bias has been also recently observed for walks among elliptic conductances \cite{BGN}.
Results concerning   the continuity of the speed  have been obtained  e.g.~for a random walk in a one--dimensional percolation model \cite{GMM} and for  the 1d Mott random walk \cite{FGS1}  (in \cite{GMM} also the differentiability has been studied).
The behavior of one dimensional RWRE's that are transient but with zero-speed has been studied in \cite{ESTZ, ESZ, KKS} for i.i.d.~jump probabilities and in \cite{BS17} for the RCM.
The continuity of the diffusion matrix  at $\l=0$ has been derived in \cite{MP2} for diffusions in random environment. In the context of random walks on groups, analyticity of the speed and of the asymptotic variance has been proved in  \cite{HMM}, while  in \cite{Gouz}  the same result is proved  with dynamical ideas for general hyperbolic groups.
Finally, a particular attention has been devoted  to  the linear response of  RWRE's  for a weak  bias (also in higher dimension). 
This has lead to the proof of the Einstein relation, which claims the equivalence between the derivative of $v(\l)$ at $\l=0$ and the diffusion coefficient of the unperturbed process $\sigma^2(0)$ (see \cite{FGS2,GGN,GMP,G,KO1,KO2,LD16,LR,L1,L2}).
 
\smallskip 
 
 We now describe our results and outline the paper. We analyze in detail biased 1d nearest--neighbor RWRE's both in discrete and in continuous time. The discrete time model is introduced in Section \ref{discreto}, while the continuous time case is introduced in Section \ref{continuo}. In Section \ref{assunzioni} we state our main assumptions and introduce the concept of a reflection invariant environment. The random conductance model (RCM), which is of particular interest in what follows, appears in Section \ref{conduco}.

Section \ref{latte} is dedicated to the study of the asymptotic velocity $v_X(\l)$ of a generic  discrete time random walk as a function of the external bias, while Section \ref{miele} treats the  velocity $v_Y(\l)$ in the continuous time case.
We show that $v_X(\l)$ is analytic everywhere with exception of at most two values $\l_-\leq \l_+$, it is strictly increasing on $(-\infty, \l_-) $ and $(\l_+,+\infty)$ and it is zero on $(\l_-, \l_+)$ (cf.~Proposition \ref{marte1}). The same holds for $v_Y(\l)$ (cf.~Proposition \ref{marte10}). 
If $\l_-=\l_+$ then $v_X(\l)$ is continuous on all $\bbR$, see Proposition \ref{tazzina}. The corresponding result for $v_Y(\l)$ appears in Proposition \ref{tazzinabis}. Sections \ref{riflessioned} and \ref{riflessionec} deal with the reflection invariant environment case. Finally we exhibit examples with an irregular behavior of the speed. Example \ref{lorenzo1} (resp.~Example \ref{lorenzo3}) shows a pathological model for which $v_X(\l)$ (resp.~$v_Y(\l)$) is not continuous in $\lambda_{\pm}$. Even when the environment is given by a (genuinely random) i.i.d.~sequence of jump probabilities, $v_X(\l)$ is not differentiable at the two points $\l_-<\l_+$, see Example \ref{lorenzo15}. The discrete time RCM with i.i.d.~genuinely random  conductances   has speed  $v_X(\l)$ without second derivative at $\l_+=\l_-=0$, see Example \ref{lorenzo2}. In continuous time, the RCM regularizes (Example \ref{antonio}), but we provide another elementary model (see Example \ref{lorenzo4}) where $v_Y(\l)$ does not have second derivative at $\l_+=\l_-=0$.


Section \ref{sectioneinstein} is dedicated to the proof of the Einstein relation for the biased RCM. 
In Theorem \ref{einstein} 
we provide a shorter alternative proof to the one appearing in \cite{LD16} and also extend the result to more general hypothesis.

In Section \ref{caniefigli} we move to the study of the central limit theorem (CLT) when the random walk is ballistic, restricting to the case of discrete time. Theorem \ref{baldo} extends the CLT discussed in \cite{Z} and Proposition \ref{sviluppo} provides an alternative description of the diffusion coefficient $\s^2(\l)$. In Proposition \ref{analiticita} we give some conditions that guarantee analyticity of $\s^2(\l)$. 
Finally, in Subsection \ref{trilli} we gather some sufficient conditions for the CLT to hold that are easier to verify. Applications are given in Example \ref{pierpaolo2007} for the case of an environment given by i.i.d.~jump probabilities and  in Section  \ref{diff_RCM} for the RCM with i.i.d.~conductances.

In Section \ref{diff_RCM} we focus on $\sigma^2(\l)$ for the RCM with i.i.d.~conductances.
In Theorem \ref{suppongo} we explicitly calculate $\s^2(\l)$ and in Proposition \ref{unpofru} we prove that $\s^2(\l)$ is continuous everywhere, it is analytic on $\bbR\setminus\{0\}$ but it is not differentiable at $\l=0$ if the conductances are genuinely random.  Moreover, we show that, differently from the case of deterministic conductances, $\s^2(\l)$ is neither monotone on $[0,\infty)$ nor on $(-\infty,0]$.

Sections \ref{lorenzo1_esteso} and \ref{lorenzo4_esteso} are the discussions of Examples \ref{lorenzo1} and \ref{lorenzo4}, respectively. In the Appendix we collect some technical proofs.

%
%
%
%
%
%

\section{Models}\label{modelli}
In this section we introduce our nearest--neighbor random walks on $\bbZ$ and fix our notation. We distinguish between discrete time random walks and continuous time random walks. 

\subsection{Discrete time random walks}\label{discreto}
We first consider discrete time random walks on $\bbZ$ in random environment. To this aim we let  $\O=(0,1)^\bbZ$ be the space of environments endowed with the product topology and with a probability measure $\bbP$ ($\bbE$ will denote the associated expectation).
We write   $\o = (\o_x^+)_{x \in \bbZ}$ for a generic element of $\O$ and set $\o_x^- := 1-\o_x^+$. We introduce then an external force, or bias, of intensity $\l\in\R$. This results in modifying the environment in the following way: fixed   $ \l \in \bbR$ we define
\begin{equation}\label{nizza}
\o^-_x(\l):= \frac{ \o^-_x \e^{-\l}}{  \o^-_x \e^{-\l}+ \o^+_x \e^\l   } \,, \qquad \o^+_x(\l):= \frac{ \o^+_x \e^{\l}}{  \o^-_x \e^{-\l}+ \o^+_x \e^\l }\,.
\end{equation}

Given a realization  $\o$  of the environment, $(X_n ^{\o, \l})_{n \geq 0}$   will be  the discrete time random walk starting at the origin  and jumping from $x$ to $x\pm 1$ with probability $\o_x^\pm(\l)$. 
 We write $ P^{\,\o, \l}_0$ and $ E^{\,\o, \l}_0$  for  the associated probability and expectation, respectively, with the convention that we will write simply $X_n$ when dealing with  
$ P^{\,\o, \l}_0$,  $ E^{\,\o,\l}_0$. 
In particular, we have 
\[  P^{\,\o, \l}_0\bigl( X_{n+1}=x-1 \,|\, X_{n}=x\bigr) = \o_x^{-}(\l) \,, \qquad P^{\,\o, \l} _0\bigl( X_{n+1}=x+1 \,|\, X_{n}=x\bigr) =\o_x^+(\l)\,.\]
Finally we define 
\begin{equation}\label{datta} \rho_x(\l):=\frac{\o^-_x(\l)}{\o^+_x(\l)}\,.
\end{equation}

When $\l=0$ we will refer to the unperturbed random walk and 
omit the index $\l$, writing simply $X_n ^{\o}$, $ P^{\o}_0$ and $E^{\o}_0$,  $\rho_x$. Note in particular that we have 
\begin{equation}
\rho_x(\l)=\frac{\o^-_x(\l)}{\o^+_x(\l)}=\frac{\o^-_x}{\o^+_x}\e^{-2\l}= \rho_x \e^{-2\l} \,.
\end{equation}
We think  of $X_n ^{\o,\l}$ as a perturbation  of $X_n ^{\o}$ due to  the presence of an external  field of intensity $\l$.
\subsection{Continuous  time random walks}\label{continuo} 
When considering continuous time random walks, we let 
$ \Theta= \bigl(\, (0,+\infty)\times (0,+\infty)\, \bigr)^\bbZ$  be the space of environments endowed with the product topology and with a probability measure $\bbP$ ($\bbE$ will denote the associated expectation). We let $\xi:= \bigl(\, (r_x^-, r_x^+)\, \bigr)_{x \in \bbZ}$ be a generic element of $\Theta$. 
Fixed  $ \l \in \bbR$ we   set
\[
r_x^{-}(\l)= r_x^- \e^{- \l}\,, \qquad r_x^{+}(\l)= r_x^+ \e^{ \l}\,.
\]
Then 
\ale{$(Y_t ^{\xi, \l})_{t \geq 0}$} will denote the continuous time random walk on $\bbZ$ starting at the origin, having nearest--neighbour jumps    with probability rate for  a jump from $x$ to $x\pm 1$ given by $r_x^\pm(\l)$. Below (cf. Assumption \ref{uffina})  we will give conditions assuring that $(Y_t ^{\xi, \l})_{t \geq 0}$ is well defined a.s.~(i.e.~no explosion takes place a.s.).

We denote by $P^{\,\xi, \l}_0$ and $ E^{\,\xi, \l} _0$  the associated probability and expectation \mic{ and also in this case we will just write $Y_t$ for the random walk when it appears inside $P^{\,\xi, \l}_0$ or $ E^{\,\xi, \l} _0$}. In particular, we have  
\[ 
P_0^{\,\xi, \l} \bigl( \, Y_{t+dt} =x-1  \,|\, Y_t=x \, \bigr) = r_x^-(\l)dt\,, \qquad 
P_0^{\,\xi,\l} \bigl( \, Y_{t+dt} =x+1  \,|\, Y_t=x \, \bigr) = r_x^+(\l) dt\,.\]
When $\l=0$ we will refer to the unperturbed random walk and 
omit the index $\l$, writing simply \ale{$Y_t ^{\xi}$,} $ P^{\,\xi}_0$ and $ E^{\,\xi}_0$. 

\smallskip

We note that the associated  discrete time version recording only the jumps (the so called \emph{jump process}), has probability for a jump from $x $ to $x\pm 1$ given by 
\begin{equation}\label{corro1} \o_x^\pm (\l):=
\frac{ r_x^\pm (\l)}{ r_x^- (\l)+r_x^+ (\l)}
= \frac{
\frac{r_x^\pm }{r_x^-+r_x^+} \e^\l
}{\frac{r_x^- }{r_x^-+r_x^+} \e^{-\l}+\frac{r_x^+ }{r_x^-+r_x^+} \e^{\l}   }=
\frac{ \o^\pm_x \e^{\pm\l}}{  \o^-_x \e^{-\l}+ \o^+_x \e^\l   }
\,,
\end{equation}
where
\begin{equation}\label{corro2} \o_x^\pm := \frac{r_x^\pm }{r_x^-+r_x^+} \,.\end{equation}
Note  that identities \eqref{nizza} are satisfied. 
In particular,  the jump process associated to the perturbed continuous time random walk $(Y_t ^{\xi, \l})_{t \geq 0}$ is the 
perturbed Markov chain associated to the jump process of $(Y_t ^{\xi })_{t \geq 0}$.
When dealing with  continuous time random walks we will keep the definitions \eqref{corro1}, \eqref{corro2} and define $\rho _x$ and $\rho_x(\l)$ according to \eqref{datta}. Note that 
\[ \rho_x= \frac{r_x^-}{r_x^+}\,,\qquad  \rho_x(\l)= \rho_x \e^{-2\l} 
\,.
\]

\subsection{Assumptions on the environment}\label{assunzioni}
For both the discrete time and the continuous time random walks we will \textit{always} make the following assumption:
\begin{Assumption}[Main Assumption]\label{uffina}
The law $\bbP$ of the environment is   stationary and ergodic with respect to shifts   and    $\E[\log \rho_0]$ is well defined, with $\pm\infty$ as possible values.
 \end{Assumption}
 We point out that Assumption \ref{uffina} corresponds to  Assumption 2.1.1 in \cite{Z}. 
\begin{Lemma}
Under Assumption \ref{uffina},  for the continuous time random walk a.s.~explosion does not take place and therefore  $Y^{\xi, \l}_t$  is well defined for all times $t$. \end{Lemma}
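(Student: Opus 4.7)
The plan is to reduce non-explosion to divergence of a deterministic series along the embedded jump trajectory, and then to invoke Birkhoff's ergodic theorem. Let $(Z_n)_{n\geq 0}$ denote the discrete-time jump chain of $Y^{\xi,\l}$ and set $R_x:=r_x^-(\l)+r_x^+(\l)$. Conditionally on $\xi$ and on the path $(Z_n)$, the successive holding times $\tau_n$ are independent exponentials of rate $R_{Z_n}$, and for such a sequence $\sum_n \tau_n$ is a.s.\ finite if and only if $\sum_n R_{Z_n}^{-1}$ is finite (check both directions by monotone convergence and Kolmogorov's zero--one law). Hence non-explosion of $Y^{\xi,\l}$ will follow once I show that $\sum_{n\geq 0}R_{Z_n}^{-1}=\infty$ almost surely.

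Next I exploit the nearest-neighbor structure of $(Z_n)$. Since $r_x^\pm(\l)>0$ a.s., one has $\o_x^\pm(\l)\in(0,1)$ for every $x$, so the chain is irreducible, and the classical integer intermediate-value argument for nearest-neighbor walks gives the dichotomy: either $(Z_n)$ is recurrent and visits every site, or it is transient with $Z_n\to\pm\infty$, in which case it still visits every integer on the corresponding half-line through the origin (because otherwise the walk would have to cross $0$ infinitely often). In either case the range $\{Z_n:n\geq 0\}$ contains $\bbZ_{\geq M}$ or $\bbZ_{\leq M}$ for some (random) integer $M$, whence
\[
\sum_{n\geq 0}\frac{1}{R_{Z_n}}\;\geq\;\sum_{x\geq M}\frac{1}{R_x}\qquad\text{or}\qquad \sum_{n\geq 0}\frac{1}{R_{Z_n}}\;\geq\;\sum_{x\leq M}\frac{1}{R_x}.
\]

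The final step is Birkhoff. Because $R_0\in(0,\infty)$ a.s., the variable $R_0^{-1}\wedge 1$ is bounded and strictly positive, hence $\E[R_0^{-1}\wedge 1]>0$, and the stationarity and ergodicity of $\bbP$ imply that $\sum_{x\geq M}R_x^{-1}=\sum_{x\leq M}R_x^{-1}=\infty$ a.s.\ for every fixed $M\in\bbZ$. A union bound over the countably many values of $M$ then transfers the conclusion to the random threshold, which closes the argument. The only point requiring a bit of care is precisely this passage from a deterministic to a random $M$, but it is routine; observe in particular that the proof uses neither the finiteness of $\E[\log\rho_0]$ nor the existence of an invariant measure for the environment seen from the particle, only the positivity of the rates together with the stationarity and ergodicity of $\bbP$.
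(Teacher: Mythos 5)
Your argument is correct, and it reaches the conclusion by a partly different route than the paper. The skeleton is the same — pass to the embedded jump chain, observe that it visits a whole half-line, and use stationarity/ergodicity to find enough ``slow'' sites there — but the two key technical steps are replaced. Where the paper invokes \cite[Thm.~2.1.2]{Z} (the transience/recurrence trichotomy for 1d RWRE, which is exactly where the requirement in Assumption \ref{uffina} that $\E[\log\rho_0]$ be well defined enters), you use the elementary fact that for each fixed environment with $\o^\pm_x(\l)\in(0,1)$ the nearest-neighbor chain is irreducible, hence either recurrent (visits every site) or transient with $Z_n\to\pm\infty$, and in the transient case the nearest-neighbor (intermediate-value) property forces it to sweep the corresponding half-line; note that your parenthetical ``otherwise the walk would cross $0$ infinitely often'' is really the justification that a transient path cannot oscillate, while covering the half-line is the intermediate-value point — the content is right, the phrasing just conflates the two. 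Where the paper argues by stochastic domination (the $n$-th jump time dominates a sum of $f(n)$ i.i.d.\ exponentials attached to visits of the set $A=\{x: r^+_x(\l)+r^-_x(\l)\le M\}$, which is a.s.\ infinite on both half-lines by the ergodic theorem), you use the standard series criterion for non-explosion, $\sum_n \tau_n=\infty$ a.s.\ iff $\sum_n R_{Z_n}^{-1}=\infty$, together with Birkhoff applied to $R_0^{-1}\wedge 1$. The paper's route is shorter given that Zeitouni's theorem is used elsewhere anyway; yours is more self-contained and, as you point out, shows the lemma needs only positivity of the rates plus stationarity and ergodicity of $\bbP$ (in fact stationarity alone would do, since the a.s.\ positive limit of the Ces\`aro averages need not be constant), not the well-definedness of $\E[\log\rho_0]$.
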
 
  \begin{proof}
 Let $\o$ be defined by \eqref{corro2} and consider the associated biased discrete time random walk $(X^{\o, \l}_n)_{n\geq 0}$. Then one can introduce the continuous time random walk as a random time change of  $(X^{\o, \l}_n)_{n\geq 0}$ by imposing that, once arrived at site $x$, the random walk remains at $x$ for an exponential random time with mean $\bigl( r_x^+(\l)+ r_x^-(\l)\bigr)^{-1}$. Take now $M>0$ such that $\bbP(r_0^+(\l)+ r_0^-(\l) \leq M)>0 $. Consider the random set $A:=\{ x\in \bbZ\,:\, r_x^+(\l)+ r_x^-(\l)\leq M\}$. Then, by the  ergodic theorem and Assumption \ref{uffina},  a.s. $A\cap \bbZ_+$ and $A\cap \bbZ_-$ are infinite sets.   By Assumption \ref{uffina} and \cite[Thm.~2.1.2]{Z}, a.s. the random walk $(X^{\o, \l}_n)_{n\geq 0}$ visits an half-line of $\bbZ$. Hence $\lim _{n\to\infty} f(n)=+\infty$ where   $f(n) :=|\{ k \in \{0,1,\dots,n\}\,:\, X_k^{\o, \l}\in A\}|$. 
 As a consequence, for almost all realizations of $(X^{\o, \l}_n)_{n\geq 0}$, 
 when we condition to the realization of $(X^{\o, \l}_n)_{n\geq 0}$ we get that  the $n$--th jump of the continuous time random walk takes place at a random time $t_n$  which stochastically dominates the sum of $f(n)$ i.i.d. exponential random variables with mean $M^{-1}$. Therefore, $t_n$ goes to infinity as $n\to \infty$ a.s. 
   \end{proof}
  
While Assumption \ref{uffina} will always hold in what follows, in order to build special counterexamples  we will sometimes make the following assumption (this will be clearly specified in the text):

\begin{Assumption}[Reflection invariance - discrete time case]\label{jesus}
The law of the environment is left invariant by the spatial reflection with respect to the origin, i.e.~ by the transformation   $(\o_x^+)_{x\in \bbZ} \mapsto (\o_{-x}^-)_{x\in \bbZ}$.
\end{Assumption} 
 
Also in the continuous time setting we will sometimes consider models with a special symmetry:
\begin{Assumption}[Reflection invariance - continuous time case]\label{jesus_cont} 
The law of the environment is left invariant by the spatial reflection with respect to the origin, i.e.~ by the transformation   $\bigl(\, (r_x^-, r_x^+)\, \bigr)_{x \in \bbZ}   \mapsto  \bigl(\, (r_{-x}^+, r_{-x}^-)\, \bigr)_{x \in \bbZ}  $.
\end{Assumption}

\subsection{Random conductance model}\label{conduco} 
\ale{In what follows, when  
referring to  \emph{random conductances}, we will mean 
 a family of positive random variables $(c_x)_{x\in \bbZ}$, stationary and ergodic w.r.t.~shifts.  }
  The number $c_x $ is  called  the \emph{conductance}  of the edge $\{x,x+1\}$. 
 Then the discrete time \emph{random conductance model} (RCM) is given by  the random walk $X_n^{\o}$ where $\o^+_x := \frac{c_x}{c_{x-1}+c_x}$ and $\o^-_x :=
\frac{c_{x-1}}{c_{x-1}+c_x}$ for all $x \in \bbZ$. The random walk $X_n^{\o,\l}$ represents the biased discrete time  RCM.  
The continuous  time RCM  is given by  the random walk  \ale{$Y_t^{\xi }$}  where $\xi= \bigl(\, (r_x^-, r_x^+) \, \bigr)_{x \in \bbZ}\in \Theta $  satisfies    $r_x^+=c_x =  r_{x+1}^-$ for all  $x \in \bbZ$.    The random walk  \ale{$Y_t^{\xi ,\l}$}  represents the biased continuous time RCM.
\ale{
\begin{Remark}\label{fieno} 
Our  main assumption (see Assumption \ref{uffina}) for the random conductance model is satisfied if  $\E[|\log c_0|]<\infty$ since $\log \rho_0 = \log c_{-1} - \log c_0$. We point out that  $\E[|\log c_0|]<\infty$ if $c_0, \; c_0^{-1}  \in L^1(\bbP)$.
\end{Remark}}



\section{Discrete time asymptotic velocity $v_X(\l)$}\label{latte}

As in \cite[Eq.~(2.1.7)-(2.1.8)]{Z} we set
\begin{align*}
\bar S(\l)
	:=\frac{1}{\o^+_0(\l)}+\sum_{i=1}^\infty \frac{1}{\o^+_{-i}(\l)}\prod_{j=0}^{i-1}\rho_{-j}(\l)	\,,\qquad 
\bar F(\l)
	:=\frac{1}{\o^-_0(\l)}+\sum_{i=1}^\infty \frac{1}{\o^-_{i}(\l)}\prod_{j=0}^{i-1}\rho^{-1}_{j}(\l).
\end{align*}

\begin{Proposition}\label{ferb}  \cite[Theorem 2.1.9]{Z}
  The limit $v_X(\l):= \lim_{n\to \infty}\frac{X_n^{\o,\l} }{n} $ exists $\bbP\otimes P_0^\o$--a.s., is not random  and is characterized as follows:
\begin{itemize}
	\item[(a)]
$\E[\bar S(\l)]<\infty
	\Longrightarrow v_X(\l)=\frac{1}{\E\left[\bar S(\l)\right]}$;
	\item[(b)]
$\E[\bar F(\l)]<\infty
	\Longrightarrow v_X(\l)=-\frac{1}{\E\left[\bar F(\l)\right]}$;
	\item[(c)]
$\E[\bar S(\l)]=\infty \mbox{ and }\E[\bar F(\l)]=\infty
	\Longrightarrow v_X(\l)=0\,.$
\end{itemize}
\end{Proposition}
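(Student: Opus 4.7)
My plan is to follow Solomon's classical route: reduce the law of large numbers for $X_n^{\o,\l}$ to one for the hitting times $T_k := \inf\{n\geq 0 : X_n^{\o,\l} = k\}$ and then invert. First, I invoke the recurrence/transience dichotomy of \cite[Thm.~2.1.2]{Z}, already used in the preceding lemma: under Assumption \ref{uffina}, the sign of $\bbE[\log \rho_0(\l)] = \bbE[\log \rho_0] - 2\l$ determines whether the walk is transient to $+\infty$, transient to $-\infty$, or recurrent. I concentrate on the transient-to-$+\infty$ regime; case (b) follows by the parallel argument with left and right interchanged, which formally swaps $\bar S$ with $\bar F$ and flips the sign of the limiting velocity.

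A one-step analysis on the quenched mean passage time $h(x) := E_x^{\,\o,\l}[T_{x+1}]$ gives
\[
h(x) = 1 + \o_x^-(\l)\bigl(h(x-1) + h(x)\bigr),
\]
which rearranges to $h(x) = 1/\o_x^+(\l) + \rho_x(\l)\,h(x-1)$. Iterating this recursion backwards and using that transience to $+\infty$ forces the boundary contribution to vanish in the limit yields $E_0^{\,\o,\l}[T_1] = \bar S(\l)\in(0,+\infty]$.

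Next, I would derive $T_n/n\to \bbE[\bar S(\l)]$ almost surely and then invert through $T_{X_n}\le n<T_{X_n+1}$ to conclude $X_n/n\to 1/\bbE[\bar S(\l)]$. The increments $T_{k+1}-T_k$ are conditionally independent given $\o$ by the strong Markov property, and their quenched means $h(k)$ form a stationary ergodic sequence under the shift on $\O$; the cleanest way to transfer this to an a.s.~statement under $\bbP\otimes P_0^{\,\o,\l}$ is via the \emph{environment seen from the particle} chain, which under Assumption \ref{uffina} admits an explicit stationary measure absolutely continuous with respect to $\bbP$ in the transient regime, so Birkhoff applies. This establishes (a). For case (c), if both $\bbE[\bar S(\l)]$ and $\bbE[\bar F(\l)]$ diverge, either the walk is recurrent (oscillation forces $X_n/n\to 0$) or it is transient with infinite mean passage time, so $T_n/n\to+\infty$ and again $X_n/n\to 0$.

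The main obstacle lies in the ergodic step: the differences $T_{k+1}-T_k$ are neither stationary nor independent under the annealed law, so the environment-from-the-particle framework (or, alternatively, Kingman's subadditive ergodic theorem applied directly to $T_n$) is genuinely needed to pass from stationarity of the $h(k)$ to the a.s.~limit. A secondary subtlety is the case $\bbE[\bar S(\l)]=+\infty$, handled by a truncation-and-Fatou argument, together with the zero-speed conclusion in (c) under transience, which requires checking that sublinear escape is indeed compatible with the divergence of both series.
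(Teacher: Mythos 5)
The paper does not actually prove this proposition---it is quoted directly from \cite[Theorem 2.1.9]{Z}---and your proposal reproduces precisely the standard hitting-time proof of that result (recursion for $E^{\,\o,\l}_x[T_{x+1}]$ yielding $\bar S(\l)$, ergodic theorem for the stationary sequence $\tau_k=T_k-T_{k-1}$, inversion), which is also the strategy the authors adapt in Appendix~\ref{nonlaso} for the continuous-time analogue (Proposition~\ref{ariccia}). The one imprecision is the inversion step: the sandwich $T_{X_n}\le n<T_{X_n+1}$ is false as written and must be stated for the running maximum $k(n):=\max_{m\le n}X_m$, after which the lower bound on $X_n$ is recovered in discrete time from $X_n\ge k(n)-\bigl(n-T_{k(n)}\bigr)\ge k(n)-\tau_{k(n)+1}$ together with $\tau_{\ell+1}=o(\ell)$ --- this backtracking control is exactly \cite[Lemma 2.1.17]{Z} and the content of Claim~\ref{pisa2} in the appendix, where it is genuinely delicate only in continuous time.
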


\begin{Lemma}\label{buddha}
It holds
\begin{align*}
\bar S(\l)
	=1+2\sum_{i=0}^\infty \rho_0\rho_{-1}\cdots\rho_{-i}\,\e^{-2\l (i+1)}\,,\qquad
\bar F(\l)
	=1+2\sum_{i=0}^\infty \rho_0^{-1}\rho_{1}^{-1}\cdots\rho_{i}^{-1}\,\e^{2\l (i+1)}\,.
\end{align*}
\end{Lemma}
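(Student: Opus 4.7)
The plan is a direct computation starting from the identities in \eqref{nizza} and \eqref{datta}. The key preliminary observation is that
\[
\frac{1}{\o^+_x(\l)} \;=\; \frac{\o^-_x \e^{-\l}+\o^+_x \e^{\l}}{\o^+_x \e^{\l}} \;=\; 1 + \rho_x \e^{-2\l} \;=\; 1 + \rho_x(\l),
\]
and symmetrically $1/\o^-_x(\l) = 1 + \rho_x(\l)^{-1}$. This turns each reciprocal appearing in $\bar S(\l)$ and $\bar F(\l)$ into something that interacts nicely with the product of $\rho$'s.

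For $\bar S(\l)$ I would set $A_i := \prod_{j=0}^{i-1}\rho_{-j}(\l)$ for $i \geq 1$ and $A_0 := 1$, so that $A_i = \rho_0\rho_{-1}\cdots\rho_{-(i-1)}\e^{-2\l i}$. Substituting the identity above into the definition gives
\[
\bar S(\l) = (1+\rho_0(\l)) + \sum_{i=1}^\infty \bigl(1+\rho_{-i}(\l)\bigr) A_i = 1 + A_1 + \sum_{i=1}^\infty A_i + \sum_{i=1}^\infty A_{i+1},
\]
using that $\rho_{-i}(\l)A_i = A_{i+1}$ and $\rho_0(\l) = A_1$. Shifting indices in the last sum and combining yields $\bar S(\l) = 1 + 2\sum_{i=1}^\infty A_i$, which after relabeling $k = i-1$ is exactly the claimed formula. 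All manipulations are legitimate at the level of nonnegative series (possibly diverging to $+\infty$), so no convergence issue arises.

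The computation for $\bar F(\l)$ is the mirror image: one replaces $\o^+$ by $\o^-$, $\rho_{-j}$ by $\rho_j^{-1}$, and $\e^{-2\l}$ by $\e^{2\l}$, and applies the same telescoping. Since both formulas are essentially algebraic rearrangements of convergent or divergent series of nonnegative terms, there is no genuine difficulty — the main thing to watch is the bookkeeping so that the factor of $2$ and the shift in the exponent of $\e^{\pm 2\l(i+1)}$ come out correctly.
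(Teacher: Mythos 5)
Your proposal is correct and follows essentially the same route as the paper: the key identity $1/\o^+_x(\l)=1+\rho_x\e^{-2\l}$, expansion of the product, and an index shift to combine the two resulting series. The bookkeeping (the factor $2$ and the exponent $\e^{-2\l(i+1)}$) comes out correctly, and your remark that all manipulations are valid for nonnegative, possibly divergent, series is exactly the right justification.
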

\begin{proof}
We observe that $\frac{1}{\o^+_x(\l)}=1+\rho_x\e^{-2\l}$.	Hence
\begin{align*}
\bar S(\l)
	&=1+\rho_0\e^{-2\l} + \sum_{i=1}^\infty (1+\rho_{-i}\e^{-2\l})\rho_0\rho_{-1}\cdots\rho_{-i+1}\e^{-2i\l}\\
	&=1+\rho_0\e^{-2\l} + \sum_{i=1}^\infty \rho_0\rho_{-1}\cdots\rho_{-i+1}\e^{-2i\l}+\sum_{i=1}^\infty \rho_0\rho_{-1}\cdots\rho_{-i+1}\rho_{-i}\e^{-2(i+1)\l}\\
	&=1 + \sum_{i=1}^\infty \rho_0\rho_{-1}\cdots\rho_{-i+1}\e^{-2i\l}+\sum_{i=0}^\infty \rho_0\rho_{-1}\cdots\rho_{-i+1}\rho_{-i}\e^{-2(i+1)\l}\, .
\end{align*}
The proof for $\bar F(\l)$ is similar.
\end{proof}

To describe some regularity properties of the asymptotic velocity $v_X(\l)$ we introduce the thresholds $\l_-$ and $\l_+$ as follows:
\[  \l_-:= \sup \{ \l\in \bbR  \,:\, v_X(\l) <0\}\,, \qquad \l_+:= \inf \{ \l \in \bbR \,:\, v_X(\l) >0\}\,,\]
with the convention that  \ale{$\sup \emptyset =-\infty$ and $\inf \emptyset =+\infty$. }

\begin{Proposition}\label{marte1}  
 The velocity 
$v_X(\l) $ is   increasing in $\l$ and $\l_-\leq \l_+$. Moreover,    $v_X(\l)$ is strictly increasing and analytic on $(-\infty , \l_-)$ and on $(\l_+, +\infty)$, while it is zero on $(\l_-, \l_+)$.
 \end{Proposition}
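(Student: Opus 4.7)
The plan relies on the explicit representation of Lemma \ref{buddha} together with the three-case dichotomy of Proposition \ref{ferb}. Since $\rho_x > 0$ a.s.\ and $\l \mapsto \e^{-2\l(i+1)}$ is strictly decreasing, the series for $\bar S(\l)$ is pathwise strictly decreasing in $\l$; taking expectations, $\l \mapsto \E[\bar S(\l)]$ is non-increasing in $\l$ and, on the set where it is finite, strictly decreasing. Symmetrically, $\l \mapsto \E[\bar F(\l)]$ is non-decreasing in $\l$ and strictly increasing where finite. In particular the finiteness sets of $\E[\bar S]$ and $\E[\bar F]$ are, respectively, upward- and downward-closed subsets of $\bbR$; by Proposition \ref{ferb} they coincide with $\{\l : v_X(\l) > 0\}$ and $\{\l : v_X(\l) < 0\}$.

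From this, monotonicity of $v_X$ follows by a short case split: if $\l_1 < \l_2$ with $v_X(\l_1), v_X(\l_2) > 0$, strict decrease of $\E[\bar S]$ gives $v_X(\l_1) < v_X(\l_2)$; the same reasoning applies if both are negative, using $\E[\bar F]$; the mixed cases are immediate from the sign. The inequality $\l_- \leq \l_+$ is then forced, for otherwise a point $\l \in (\l_+, \l_-)$ would have to satisfy simultaneously $v_X(\l) > 0$ and $v_X(\l) < 0$. On the middle interval $(\l_-, \l_+)$ neither $\E[\bar S(\l)]$ nor $\E[\bar F(\l)]$ is finite, so Proposition \ref{ferb}(c) gives $v_X \equiv 0$. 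Strict monotonicity on the two outer intervals is inherited from the strict monotonicity of $\E[\bar S]$ and $\E[\bar F]$ there, through $x \mapsto \pm 1/x$.

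It remains to prove real analyticity on $(\l_+, +\infty)$; the interval $(-\infty, \l_-)$ is treated symmetrically using $\bar F$. Setting $a_i := \E[\rho_0 \rho_{-1} \cdots \rho_{-i}] \in [0, +\infty)$, Tonelli's theorem yields
\[
\E[\bar S(\l)] \;=\; 1 + 2 \sum_{i=0}^{\infty} a_i \, \e^{-2\l(i+1)}, \qquad \l > \l_+,
\]
with the right-hand side finite. Given $\l_0 > \l_+$, pick $\l_* \in (\l_+, \l_0)$; each summand is entire in $\l \in \bbC$ and dominated on the half-plane $\{\Re \l \geq \l_*\}$ by the summable sequence $a_i \, \e^{-2\l_*(i+1)}$. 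Normal convergence yields a holomorphic extension of $\E[\bar S(\cdot)]$ to $\{\Re \l > \l_*\}$, hence real analyticity at $\l_0$. Since $\E[\bar S(\l)] \geq 1 > 0$, the reciprocal $v_X(\l) = 1/\E[\bar S(\l)]$ is real analytic on $(\l_+, +\infty)$. The only mildly subtle point I anticipate is the preliminary step, namely extracting from Proposition \ref{ferb} the equivalence between the sign of $v_X$ and the finiteness of the two series, together with the resulting interval structure of the finiteness sets; once this is in place, the remaining arguments reduce to routine monotonicity and normal-convergence reasoning on a shifted half-plane.
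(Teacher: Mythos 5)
Your proposal is correct. The monotonicity and interval-structure part coincides with the paper's argument: both read off from Lemma \ref{buddha} that $\E[\bar S]$ is non-increasing (strictly decreasing where finite, since for $\l>\l_+$ one has $\bar S(\l)<\infty$ a.s.\ and the pathwise difference is a sum of strictly positive terms) and $\E[\bar F]$ is non-decreasing, then combine with the trichotomy of Proposition \ref{ferb}; your explicit observation that the finiteness sets of $\E[\bar S]$ and $\E[\bar F]$ are upward- and downward-closed and coincide with $\{v_X>0\}$ and $\{v_X<0\}$ is exactly what the paper uses implicitly. Where you genuinely diverge is the analyticity step. The paper reduces to Lemma \ref{latticini} in the Appendix, a real-variable criterion based on the Pringsheim--Boas theorem: one shows $h(\l)=\E[\bar S(\l)]$ is $C^\infty$ with $|h^{(k)}(\l)/k!|^{1/k}$ locally bounded, using the elementary bound $(cn)^k/k!\le \e^{cn\ep}/\ep^k$. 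You instead extend each term $a_i\e^{-2\l(i+1)}$ to an entire function of complex $\l$ and invoke normal convergence on the half-plane $\{\Re\l\ge\l_*\}$ with $\l_*\in(\l_+,\l_0)$ to get a holomorphic extension, hence real analyticity on restriction. Both are valid; your route is shorter and uses only the Weierstrass convergence theorem, while the paper's real-variable lemma avoids complexification and is reused verbatim for the analyticity of $\s^2(\l)$ in Proposition \ref{analiticita}, which is presumably why the authors package it as a standalone appendix lemma.
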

 \begin{Remark}\label{marte2} Due to the above proposition,  $v_X(\l)$ is analytic everywhere with possible exception at $\l_-, \l_+$, where it can be irregular (even discontinuous, see Section \ref{sec_piatti}). 
 \end{Remark}
\begin{proof}[Proof of Proposition \ref{marte1}] Due to the representation given in Lemma \ref{buddha} one gets that 
the function $\bar S : \bbR \to [0, +\infty]$  is decreasing,  and that the function $\bar F: \bbR \to [0,+\infty]$ is increasing.  Combining this observation with  Proposition \ref{ferb}, one gets that $v_X(\l) $ is   increasing in $\l$, thus implying that $\l_-\leq \l_+$ and that $v_X(\l)=0$ on $(\l_-,\l_+)$.  Note that $\l_+ =  \inf \{ \l \in \bbR \,:\, \bbE[ \bar S(\l) ] <\infty \}$. Hence, for $ \l > \l_+$ we have $\bar S(\l) <\infty$ $\bbP$--a.s.  This property and the form of $\bar S$ given in Lemma \ref{buddha}  allow to conclude that, given  $ \l_+ < \l<\l'$, 
 $ \bar S(\l) > \bar S(\l')$ $\bbP$--a.s. Since $\bbE [  \bar S(\l)]$ and  $\bbE [  \bar S(\l')]$ 
are finite, we then conclude that  $\bbE [  \bar S(\l)]> \bbE [  \bar S(\l')]$ and therefore that $v_X(\l) < v_X( \l')$  (cf.  Item (a) in Proposition \ref{ferb}). In a similar way, one proves that $v_X(\l)$ is strictly increasing in $(-\infty, \lambda_-)$.

It remains to prove that $v_X(\l)$ is analytic  on $(-\infty , \l_-)$ and on $(\l_+, +\infty)$. We show its analyticity on  $(\l_+, +\infty)$, the case $(-\infty , \l_-)$ is similar. Since $ v_X(\l)=\frac{1}{\E[\overline{S}(\l)]} $  and $\E[\bar S(\l)]$ is finite and positive on 
$(\l_+, +\infty)$, it is enough to prove that the map $\l \mapsto h(\l):=\bbE[\bar S(\l)]$ is analytic on $(\l_+, +\infty)$. 
This  follows  from  Lemma \ref{latticini} in Appendix, which is based on 
the Theorem of  Pringsheim-Boas (cf. \cite[Thm.~3.1.1]{KP}).
\end{proof}

 As already pointed out, there are models for which  $v_X(\l)$ is discontinuous at $\l_-$ or $\l_+$. On the other hand, if $\l_-=\l_+$ this cannot happen:
\begin{Proposition} \label{tazzina}
If  $\l_-=\l_+$, then    $v_X(\l)$ is continuous. Moreover, it must be $\l_-=\l_+=\E[\log\rho_0]$ and $v_X(\l_\pm)=0$.
\end{Proposition}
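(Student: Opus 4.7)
The plan is to identify $\l_*:=\l_-=\l_+$ with the Solomon recurrence threshold $\l_c$ of the perturbed walk, so that $v_X(\l_*)=0$ comes for free; continuity at $\l_*$ then follows from monotone convergence applied to the series $\bar S(\l)$ and $\bar F(\l)$ of Lemma \ref{buddha}.

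I first invoke the Solomon dichotomy \cite[Thm.~2.1.2]{Z}: under Assumption \ref{uffina}, the biased walk is transient to $+\infty$ iff $\E[\log\rho_0(\l)]=\E[\log\rho_0]-2\l<0$, transient to $-\infty$ iff this quantity is positive, and recurrent iff it vanishes. Let $\l_c$ denote the unique value with $\E[\log\rho_0(\l_c)]=0$. A walk transient to $+\infty$ has $v_X\geq 0$, so $\{v_X>0\}\subseteq[\l_c,+\infty)$, whence $\l_+\geq\l_c$; by symmetry $\l_-\leq\l_c$. Under the hypothesis $\l_-=\l_+$ these inequalities squeeze $\l_*=\l_c$. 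At $\l_c$ the walk is recurrent, so $\limsup X_n=+\infty$ and $\liminf X_n=-\infty$ a.s., which is incompatible with a nonzero asymptotic speed; thus $v_X(\l_*)=0$, and by Proposition \ref{ferb} case (c) must apply, forcing $\E[\bar S(\l_*)]=\E[\bar F(\l_*)]=+\infty$. This also pins down $\l_\pm$ in terms of $\E[\log\rho_0]$ as claimed (up to the factor of $2$ introduced by the convention in \eqref{nizza}).

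For continuity, Proposition \ref{marte1} already gives analyticity on $\R\setminus\{\l_*\}$, so only $\l_*$ needs treatment. By Lemma \ref{buddha}, $\l\mapsto\bar S(\l)$ is pointwise decreasing, so as $\l\downarrow\l_*$ we have $\bar S(\l)\uparrow\bar S(\l_*)$; monotone convergence yields $\E[\bar S(\l)]\uparrow+\infty$, hence $v_X(\l)=1/\E[\bar S(\l)]\downarrow 0$. The symmetric argument using the pointwise-increasing map $\l\mapsto\bar F(\l)$ gives $v_X(\l)\uparrow 0$ as $\l\uparrow\l_*$. Combined with $v_X(\l_*)=0$, this yields continuity at $\l_*$. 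The main obstacle is the sandwich step $\l_-\leq\l_c\leq\l_+$: without Solomon's criterion one could not a priori exclude the pathology $\E[\bar S(\l_*)]<\infty$, which would produce a jump discontinuity of $v_X$ at $\l_*$ with a strictly positive right-value. Zeitouni's Theorem 2.1.2 is the precise input that forces the unique recurrent parameter to lie between the two strictly-monotone analytic branches of $v_X$, and hence, under our hypothesis, to coincide with them.
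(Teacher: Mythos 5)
Your proposal is correct, but it follows a genuinely different route from the paper's. The paper stays entirely inside the series framework: by Jensen's inequality and stationarity it bounds $\E[\rho_0\cdots\rho_i]\geq \e^{(i+1)\E[\log\rho_0]}$ (and the analogous bound for $\rho_0^{-1}\cdots\rho_i^{-1}$), deduces $\l_-\leq\E[\log\rho_0]\leq\l_+$ so that under the hypothesis the common value is forced, notes that both $\E[\bar S]$ and $\E[\bar F]$ diverge there (hence zero speed by Proposition \ref{ferb}(c)), and finally excludes a jump by showing, again from the same Jensen bound, that $\E[\bar S(\cdot)]$ is bounded below by a geometric series that diverges as the bias approaches the critical value from above. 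You instead import Zeitouni's transience/recurrence trichotomy (\cite[Thm.~2.1.2]{Z}), which the paper uses elsewhere, to sandwich the recurrent parameter $\l_c$ between $\l_-$ and $\l_+$ and to obtain $v_X(\l_*)=0$ directly from a.s.~oscillation (with Proposition \ref{ferb} then forcing $\E[\bar S(\l_*)]=\E[\bar F(\l_*)]=\infty$), and you replace the explicit lower bound at the continuity step by monotone convergence, $\E[\bar S(\l)]\uparrow\E[\bar S(\l_*)]=\infty$ as $\l\downarrow\l_*$. Both arguments are sound; the paper's is self-contained and quantitative, while your MCT step isolates cleanly the only mechanism by which a jump at $\l_+$ can occur, namely $\E[\bar S(\l_+)]<\infty$, which is precisely the pathology of Example \ref{lorenzo1}. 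Two small points. First, your value $\l_\pm=\tfrac12\E[\log\rho_0]$ is in fact the correct normalization: it is what Lemma \ref{buddha} (exponent $\e^{-2\l(i+1)}$) and the deterministic case of Example \ref{lorenzo15} give, whereas the statement and the paper's own proof drop a factor $2$ in the exponent; your parenthetical about the convention \eqref{nizza} is therefore well taken. Second, as written your $\l_c$ is undefined when $\E[\log\rho_0]=\pm\infty$; you should dispose of that case in one line (e.g.~if $\E[\log\rho_0]=+\infty$ then $\E[\log\rho_0(\l)]>0$ for every $\l$, so $v_X\leq 0$ everywhere, $\l_+=+\infty$, the hypothesis forces $\l_-=+\infty$, and continuity is already contained in Proposition \ref{marte1}), exactly as the paper does with its ``nothing left to prove'' sentence.
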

\begin{proof}
The continuity for $\l>\l_+$ and for $\l<\l_-$ is given by Proposition \ref{marte1}. Let us  check that $\l_-=\l_+=\E[\log\rho_0] $ and that $v_X(\l)$ is continuous at 
 $\l=\E[\log\rho_0]$ when this value is finite.
First of all we notice that $\E[\bar S(\l)]$ is finite iff $\sum_{i=0}^\infty \E[\rho_0\cdots\rho_i]{\rm e}^{-\l(i+1)}$ is finite \ale{(see Lemma \ref{buddha}). Since, by Jensen's inequality,}
\begin{align}\label{sanpaolo}
\E[\rho_0\cdots\rho_i]
	=\E[{\rm e}^{\log\rho_0+\dots+\log\rho_i}]
	\geq {\rm e}^{\E[\log\rho_0+\dots+\log\rho_i]}
	= {\rm e}^{\E[\log\rho_0](i+1)}\,,
\end{align}
we always have that $\l_+\geq \bar \l:=\E[\log\rho_0]$.

Analogously, $\E[\bar F(\l)]$ is finite iff $\sum_{i=0}^\infty \E[\rho_0^{-1}\cdots\rho_i^{-1}]{\rm e}^{\l(i+1)}$ is finite \ale{ and }
\begin{align*}
\E[\rho_0^{-1}\cdots\rho_i^{-1}]	
	=\E[{\rm e}^{-\log\rho_0-\dots-\log\rho_i}]
	\geq {\rm e}^{\E[-\log\rho_0-\dots-\log\rho_i]}
	= {\rm e}^{-\E[\log\rho_0](i+1)}\,,
\end{align*}
so that it is always true that $\l_-\leq \bar \l:=\E[\log\rho_0]$. It follows that, if $\l_-=\l_+$, it must be $\l_-=\l_+=\bar\l=\E[\log\rho_0]$. If $\E[\log\rho_0]=\pm \infty$, there is nothing left to prove.

\smallskip

From now on we assume that $\bar \l=\E[\log\rho_0]$ is finite.  From  the  observations we just made, it also follows that $v_X(\bar \l)=0$, since at $\bar\l$ both $\E[\bar S(\l)]$ and $\E[\bar F(\l)]$ are infinite.
W.l.o.g.~we assume now by contradiction that there is a discontinuity to the right of $\bar\l$. If this is the case, we must have $\lim_{\varepsilon\to 0}v_X(\bar\l+\varepsilon)>0$, or equivalently $\lim_{\varepsilon\to 0}\E[\bar S(\bar \l+\varepsilon)]<\infty$. But this is in contradiction with the following:
\begin{align*}
\lim_{\varepsilon\to 0} \E[\bar S(\bar \l+\varepsilon)]
	&= 1+2\lim_{\varepsilon\to 0}\sum_{i=0}^\infty \E[\rho_0\cdots\rho_i]{\rm e}^{-(\bar \l+\varepsilon)(i+1)}\\
	&\geq 1+2\lim_{\varepsilon\to 0}\sum_{i=0}^\infty {\rm e}^{\bar \l(i+1)}{\rm e}^{-(\bar \l+\varepsilon)(i+1)}\\
	&=1+2\lim_{\varepsilon\to 0}\sum_{i=0}^\infty {\rm e}^{-\varepsilon(i+1)}
	=1+2\lim_{\varepsilon\to 0}\frac{{\rm e}^{-\varepsilon}}{1-{\rm e}^{-\varepsilon}}
	=\infty\,,
\end{align*}
where for the inequality we have used \eqref{sanpaolo}.
\end{proof}

\subsection{Reflection invariance case}\label{riflessioned}
In the particular case of reflection invariant environments (cf.~Assumption \ref{jesus}) we have the following:


\begin{Proposition}\label{fiumicino}
Suppose Assumption \ref{jesus} to be satisfied. Then it holds
\begin{align}\label{flauto_bis} 
v_X(\l)=-v_X(-\l)\,.
\end{align}
In particular, $v_X(0)=0$ and, if $v_X$ has $n$-th derivative at $0$ with $n$ even, then  this derivative must be $0$.
Moreover, 
the following dichotomy holds for $\l\geq 0$:
\begin{align*}
\begin{cases}
\E[\bar S(\l)]<+\infty&\Longrightarrow \quad v_X(\l)=\frac{1}{\E[\overline{S}(\l)]}>0\,,\\
\E[\bar S(\l)]=+\infty &\Longrightarrow \quad v_X(\l)=0\,.
\end{cases}
\end{align*}
\end{Proposition}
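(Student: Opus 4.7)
My plan is to convert the reflection symmetry on $\omega$ into the natural symmetry $\bar S(\lambda) \leftrightarrow \bar F(-\lambda)$ at the level of the series representation provided by Lemma~\ref{buddha}, and then read everything off Proposition~\ref{ferb}. The reflection map $(\omega_x^+)_{x\in\bbZ}\mapsto(\omega_{-x}^-)_{x\in\bbZ}$ sends $\rho_x = \omega_x^-/\omega_x^+$ to $\omega_{-x}^+/\omega_{-x}^- = \rho_{-x}^{-1}$, so Assumption~\ref{jesus} is equivalent to the distributional identity $(\rho_x)_{x\in\bbZ} \stackrel{d}{=} (\rho_{-x}^{-1})_{x\in\bbZ}$ under $\bbP$.

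Substituting this identity into the formulas of Lemma~\ref{buddha} yields $\bar S(\lambda) \stackrel{d}{=} \bar F(-\lambda)$, because the product $\rho_0\rho_{-1}\cdots\rho_{-i}$ is replaced by $\rho_0^{-1}\rho_1^{-1}\cdots\rho_i^{-1}$ while the factor $\e^{-2\lambda(i+1)}$ is unchanged. In particular $\E[\bar S(\lambda)] = \E[\bar F(-\lambda)]$ for every $\lambda$. Plugging this into Proposition~\ref{ferb} case by case: in case (a) at $\lambda$ one gets $v_X(\lambda) = 1/\E[\bar S(\lambda)] = 1/\E[\bar F(-\lambda)] = -v_X(-\lambda)$ via case (b) at $-\lambda$; case (b) at $\lambda$ is symmetric; and if case (c) holds at $\lambda$ then both $\E[\bar S(-\lambda)]$ and $\E[\bar F(-\lambda)]$ are also infinite, so case (c) holds at $-\lambda$ and $v_X(\lambda) = 0 = -v_X(-\lambda)$. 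This proves \eqref{flauto_bis}. Setting $\lambda = 0$ gives $v_X(0) = 0$, and differentiating the identity $n$ times yields $v_X^{(n)}(\lambda) = -(-1)^n v_X^{(n)}(-\lambda)$; evaluating at $\lambda = 0$ with $n$ even forces $v_X^{(n)}(0) = 0$.

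For the dichotomy, the plan is to rule out case (b) of Proposition~\ref{ferb} on $[0,+\infty)$. By Proposition~\ref{marte1} the map $\lambda \mapsto v_X(\lambda)$ is non-decreasing, so the equality $v_X(0) = 0$ just obtained gives $v_X(\lambda) \geq 0$ for every $\lambda \geq 0$; case (b) would instead produce the strictly negative value $-1/\E[\bar F(\lambda)]$, a contradiction. Hence on $[0,+\infty)$ only cases (a) and (c) can occur: if $\E[\bar S(\lambda)] < \infty$, case (a) gives $v_X(\lambda) = 1/\E[\bar S(\lambda)]$, which is strictly positive since $\bar S(\lambda) \geq 1$; otherwise case (c) applies and $v_X(\lambda) = 0$.

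I do not expect a serious obstacle: the whole argument is driven by the single distributional identity between $(\rho_x)$ and $(\rho_{-x}^{-1})$ and by the closed form of Lemma~\ref{buddha}. The only piece of bookkeeping that requires care is that Proposition~\ref{ferb} has a three-case, not two-case, structure, so one must verify that the symmetry $\lambda \leftrightarrow -\lambda$ is compatible with all three cases; this is exactly what the identity $\bar S(\lambda) \stackrel{d}{=} \bar F(-\lambda)$ provides, and what makes the dichotomy on $[0,+\infty)$ come out cleanly.
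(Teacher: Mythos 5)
Your proof is correct. For the antisymmetry \eqref{flauto_bis} you are making explicit what the paper dismisses with ``follows by symmetry'': the reflection $(\o_x^+)_{x\in\bbZ}\mapsto(\o_{-x}^-)_{x\in\bbZ}$ sends $\rho_x$ to $\rho_{-x}^{-1}$, so Assumption \ref{jesus} gives $(\rho_x)_{x\in\bbZ}\stackrel{d}{=}(\rho_{-x}^{-1})_{x\in\bbZ}$, hence $\E[\bar S(\l)]=\E[\bar F(-\l)]$ by Lemma \ref{buddha}, and the three cases of Proposition \ref{ferb} then match up exactly as you describe; this is the intended argument, just written out. Where you genuinely diverge from the paper is the dichotomy. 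The paper rules out case (b) of Proposition \ref{ferb} on $[0,+\infty)$ by a direct estimate: reflection invariance forces $\rho_0\stackrel{d}{=}\rho_0^{-1}$, hence $\E[\log\rho_0]=0$, and Jensen's inequality applied to the series of Lemma \ref{buddha} gives $\E[\bar F(\l)]\geq 1+2\sum_{i\geq 0}\e^{2\l(i+1)}=\infty$ for every $\l\geq 0$. You instead rule it out indirectly: from \eqref{flauto_bis} you get $v_X(0)=0$, monotonicity of $v_X$ (Proposition \ref{marte1}) gives $v_X(\l)\geq 0$ for $\l\geq 0$, and a finite $\E[\bar F(\l)]$ would force the strictly negative value $-1/\E[\bar F(\l)]$, a contradiction. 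Both routes are valid, and there is no circularity since Proposition \ref{marte1} precedes Proposition \ref{fiumicino} and does not use it. The trade-off is minor: the paper's Jensen argument is self-contained at the level of the series and produces the quantitative fact $\E[\bar F(\l)]=\infty$ directly, whereas yours is shorter but imports the monotonicity result; both deliver the same dichotomy, with positivity of $v_X(\l)$ in case (a) coming from $\bar S(\l)\geq 1$ either way.
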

\begin{proof} Identity \eqref{flauto_bis} follows by symmetry, while the identity $v^{(n)}_X(0)=0$ (for even $n$'s) follows from \eqref{flauto_bis}.
 By Proposition \ref{ferb} to get the dichotomy  it is enough to check that $\E[\bar F]=\infty$ for $\l\geq 0$. Since $\rho_0=
 \frac{\o_0^-}{\o_0^+}$ and $\rho_0^{-1}=\frac{\o_0^+}{\o_0^-}$, by Assumption  \ref{jesus} we have that $\rho_0$ and $\rho_0^{-1}$ have the same law. In particular, $\E[\log \rho_0]=0 $ and therefore $\E[\log(\rho_0^{-1}\rho_{1}^{-1}\cdots\rho_{i}^{-1})]=0$. By Jensen's inequality and Lemma \ref{buddha} \ale{we conclude that }
\begin{align*}
\E[\bar F]
	&=1+2\sum_{i=0}^\infty \E[\rho_0^{-1}\rho_{1}^{-1}\cdots\rho_{i}^{-1}]\,\e^{2\l (i+1)}\\
	&\geq1+2\sum_{i=0}^\infty \e^{\E[\log(\rho_0^{-1}\rho_{1}^{-1}\cdots\rho_{i}^{-1})]}\,\e^{2\l (i+1)}=1+2\sum_{i=0}^\infty \e^{2\l (i+1)} =\infty.\qedhere
\end{align*}
\end{proof}

\subsection{Examples of models with irregular asymptotic velocity $v_X(\l)$}\label{sec_piatti}
We conclude this section with three examples.  In Example \ref{lorenzo1}    $v_X(\l)$ is not continuous at $\l_+$. This example is rather exotic and if one is interested in models violating  e.g. the   analiticity of $v_X(\l)$, then it is enough to consider random walks with i.i.d.~and genuinely random  $\omega_i^+$ (see Example \ref{lorenzo15})  or the  RCM  with i.i.d.~and genuinely random conductances (see Example \ref{lorenzo2}). In Example \ref{lorenzo15} $v_X(\l)$ is not differentiable in $\l_- $, $\l_+$ (in this case $\l_-<\l_+$), while in 
  Example \ref{lorenzo2}  $v_X(\l)$ has not second derivative at $0=\l_-=\l_+$.

\begin{Example}\label{lorenzo1}
$v_X(\l)$ is in general not continuous as in  the following model. Fixed the \ale{parameters}  $A>0$ and $\g>2$,  we  first introduce   the random variables $r(k,k-1)$ and $r(k,k+1)$, $k\in \bbZ$.   We set  $r(k,k-1):=A$ for all $k\in\Z$. To define  $r(k,k+1)$ we proceed as follows. We let $\tilde \tau=(\tilde\tau_k)_{k\in\Z}$ be a renewal point process on $\bbZ$  \ale{ such that  $\tilde\tau_0=0$ and, for $k\not=0$, 
 $\P(\tilde{\tau}_{k+1}- \tilde{\tau}_{k} \geq j)=c/j^\gamma$ for any $j  \in\N_+$.  Here,   $c$ is   the appropriate renormalizing constant and $\N_+$ is the set of positive integers.}
 We write $\t = (\tau_k)_{k\in\Z}$ for the renewal point process  given by the $\bbZ$--stationary version of $\tilde \tau$ (see Section \ref{lorenzo1_esteso}). 
For $k\in\Z$,  we set 
\[
r(k,k+1):= \begin{cases}
1\quad\mbox{if }- k\notin \tau\,,\\
2 \quad\mbox{if }-k\in \tau\,.
\end{cases} 
\]
Finally, we take $\o_k^+:=\frac{r(k,k+1)}{r(k,k-1)+ r(k,k+1)}$, $\o_k^-:=\frac{r(k,k-1)}{r(k,k-1)+ r(k,k+1)}$.
Then Assumption \ref{uffina} is satisfied,  $v_X(\l)> 0$ for $\l\geq \l_+=\frac 12 \log A $, while $v_X(\l)\leq 0$  for $\l<\l_+$. In particular,   $v_X(\l)$ has a discontinuity at $\l_+$. \ale{In addition,  $\l_-$ is finite and $\l_-<\l_+$.} 
\end{Example}
The discussion of the above example is given in Section \ref{lorenzo1_esteso}.

\begin{Example}\label{lorenzo15} Consider the case of i.i.d.~$\o_i^+$'s such that $\bbE[\log \rho_0]$ is well defined (this
assures that Assumption \ref{uffina} is satisfied). Then $\l_+=\tfrac 12\log \E[\rho_0] $ and $\l_-=-\tfrac 12\log \E[1/\rho_0]$. Moreover, if the $\o_i^+$'s are  genuinely random  then $\l_-<\l_+$ and $v_X(\l)$ is not differentiable at $\l_+$ if $\l_+$ is finite.  The same holds  for $\l_-$.
\end{Example}
\begin{proof}[Discussion of Example \ref{lorenzo15}]
By applying Proposition \ref{ferb} and  Lemma \ref{buddha}, we have  $\l_+=\tfrac 12\log \E[\rho_0] $, $\l_-=-\tfrac 12\log \E[1/\rho_0]$ and 
\begin{align}\label{joe}
v_X(\l)=
\begin{cases}
\frac{1-\E[\rho_0]\e^{-2\l}}{1+\E[\rho_0]\e^{-2\l}}& \mbox{if } \l>\l_+\,,\\
0 &  \mbox{if } \l_-\leq \l \leq \l_+\,,\\
-\frac{1-\E[1/\rho_0]\e^{2\l}}{1+\E[1/\rho_0]\e^{2\l}} &\mbox{if } \l<\l_-\,.
\end{cases}
\end{align}
In particular, $v_X(\l)$ is continuous.
Let us now restrict to genuinely random variables $\o_i^+$.
By Jensen inequality we  have $\l_-<\l_+$.
We also  notice that the right derivative of $v_X(\l)$ for $\l\geq \l_+$ is $\frac{4\E[\rho_0]\e^{-2\l}}{(1+\E[\rho_0]\e^{-2\l})^2}$, which is equal to $1$ in $\l_+$  if $\l_+$ is finite. Hence $v_X(\l)$ is not differentiable in $\l_+$. Similar considerations hold for $\l_-$.
\end{proof}

\medskip

Let us now consider a discrete time  RCM with \ale{with 
conductances $(c_x)_{x\in \bbZ}$ (see Section \ref{conduco})}.  We collect some observations which will be used in the next Example \ref{lorenzo2}.
Since $\rho_x=\frac{c_{x-1}}{c_x}$ we have $\rho_0\rho_{-1}\cdots\rho_{-i}=\frac{c_{-i-1}}{c_0}$. By Lemma \ref{buddha}
we then get 
$\bar S(\l)=1+2\sum_{i=0}^\infty \frac{c_{-i-1}}{c_0}\e^{-2\l(i+1)}$ and 
by using the translation invariance of the conductances  we conclude that 
\begin{align}\label{scacco}
\E[\bar S(\l)]=1+2\sum_{i=1}^{\infty}\E\Big[\frac{c_0}{c_i}\Big]\e^{-2\l i}\,.
\end{align}
Finally, we note   that for the RCM Assumption \ref{jesus} is equivalent to saying that the sequences $(c_x)_{x\in\Z}$ and $(c_{-x})_{x\in\Z}$ have the same law. In particular, if the conductances are i.i.d. as in Example \ref{lorenzo2} below, Assumption \ref{jesus} is satisfied. 


\begin{Example}\label{lorenzo2}
Consider the discrete time RCM with  i.i.d.~conductances such that  $\E[c_0]<\infty$, $\E[1/c_0]<\infty$ and $c_0$ is not almost surely constant. Then  the model is reflection invariant and at $\l_{+}=\l_-=0$  $v_X(\l)$ is continuous, has first derivative but has no second derivative. 
\end{Example}
\begin{proof}[Discussion of Example \ref{lorenzo2}]
 \ale{As discussed above and due to Remark \ref{fieno},    Assumptions \ref{uffina} and  \ref{jesus} are satisfied}.
Take $\l\geq 0$ and let $A:=\E[c_0]$ and $B:=\E[1/c_0]$. Note that by Jensen's inequality $AB>1$, since $c_0$ is non deterministic. By \eqref{scacco} we have 
\begin{align*}
\E[\bar S(\l)]=1+2AB\,\frac{\e^{-2\l}}{1-\e^{-2\l}}= \frac{  1-\e^{-2\l}+2AB\e^{-2\l}}{ 1-\e^{-2\l}}\, .
\end{align*}
By  Proposition \ref{fiumicino} we conclude that $\l_+=0$ and
\begin{equation}\label{v_RCM}
v_X(\l)=\frac{1-\e^{-2\l}}{1-\e^{-2\l}+2AB\e^{-2\l}}\,, \qquad \l\geq0\,.
\end{equation}
A similar analysis can be done for $\l<0$.
By a Taylor expansion, \ale{for $\l \geq 0$} we have 
\begin{equation*}
\begin{split}
v_X(\l)
	&= \frac{ 2\l - 2 \l^2 + o(\l^2) }{ 2\l + 2AB- 4AB \l  +o(\l)    }
	=\frac{1}{AB}\cdot  \frac{  \l -  \l^2 + o(\l^2) }{  1+ (1/AB - 2 ) \l  +o(\l)    } \\
	&=\frac{1}{AB} \bigl(  \l -  \l^2 + o(\l^2)  \bigr )\Bigl(  1- \big(\frac{1}{AB} - 2 \big) \l  +o(\l)   \Bigr)
	=\frac{1}{AB} \l + \frac{AB-1}{(AB)^2} \l^2 + o(\l^2)\,. 
  \end{split}
  \end{equation*}
Since $v(\l)=-v(-\l)$, the above Taylor expansion shows that $v(\l)$ is continuous and has a first derivative at $\l=0$. On the other hand, $ \partial_{\l\l}^+v_X(0)/2 =  (AB-1)/(AB)^2 \ale{>0}$ (recall that $AB>1$). By Proposition \ref{fiumicino} we conclude that  $v_X(\cdot)$ has no second derivative at $0$. 
\end{proof}

\begin{Remark} 
 In Example \ref{lorenzo15} with  $\rho_i$ equal to a fixed constant $C$, we have $v_X(\l)=\frac{1-C\e^{-2\l}}{1+C\e^{-2\l}}$ for each $\l\in\R$. In particular, $v_X(\l)$ is everywhere analytic. Analogously, in Example \ref{lorenzo2} with  conductances   equal to  a fixed constant, then $v_X(\l)=\frac{1-\e^{-2\l}}{1+\e^{-2\l}}$ and therefore  $v_X(\l)$  is again everywhere analytic. In particular, in both cases  the emergence of the irregularity of the asymptotic velocity $v_X$ corresponds to the randomness  of the  environment.
\end{Remark}

\section{Continuous time asymptotic velocity $v_Y(\l)$}\label{miele}

\bigskip

We set 
\begin{align*}
& \hat S(\l)
	:=\frac{1}{r^+_0(\l)
	}+\sum_{i=1}^\infty \frac{1}{ r^+_{-i}(\l)}
	\prod_{j=0}^{i-1}\rho_{-j}(\l)	\,,\\
&\hat F(\l)
	:=\frac{1}{r^-_0(\l)}+\sum_{i=1}^\infty \frac{1}{r_i^-(\l)
	 }\prod_{j=0}^{i-1}\rho^{-1}_{j}(\l).
\end{align*}

\begin{Proposition}\label{ariccia}   The limit $v_Y(\l):= \lim_{t\to \infty}\frac{Y_t^{\xi,\l} }{t} $ exists \ale{$\bbP\otimes P_0^{\xi, \l}$--a.s.,} is not random  and is characterized as follows:
\begin{itemize}
	\item[(a)]
$\E[\hat S(\l)]<\infty
	\Longrightarrow v_Y(\l)=\frac{1}{\E[\hat S(\l)]}$;
	\item[(b)]
$\E[\hat F(\l)]<\infty
	\Longrightarrow v_Y(\l)=-\frac{1}{\E[\hat F(\l)]}$;
	\item[(c)]
$\E[\hat S(\l)]=\infty \mbox{ and }\E[\hat F(\l)]=\infty
	\Longrightarrow v_Y(\l)=0\,.$
\end{itemize}
\end{Proposition}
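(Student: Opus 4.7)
The plan is to adapt the discrete-time argument of Proposition~\ref{ferb} to continuous time via a mean hitting-time computation and Birkhoff's ergodic theorem. Write $H_n := \inf\{t \ge 0 : Y_t^{\xi,\l} = n\}$ for the hitting times and let $\sigma$ denote the left shift on $\Theta$. As a first step I would absorb $\l$ into the environment by setting $\tilde r_x^\pm := r_x^\pm(\l)$; the family $\tilde\xi = ((\tilde r_x^-,\tilde r_x^+))_{x\in\bbZ}$ is still stationary and ergodic under $\bbP$, with $\bbE[\log\tilde\rho_0] = \bbE[\log\rho_0]-2\l$, so Assumption~\ref{uffina} is preserved. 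Since $Y^{\xi,\l}$ has the law of the unbiased walk in environment $\tilde\xi$, it is enough to prove the statement for $\l=0$, which I assume from now on, dropping $\l$ from the notation.

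Second, I would identify $\hat S$ as a quenched mean hitting time. For $g_x := E_x^\xi[H_{x+1}]$, conditioning on the time and direction of the first jump from $x$ and using the strong Markov property yields the recursion $g_x = (r_x^++r_x^-)^{-1} + \rho_x\, g_{x-1}$ with $\rho_x = r_x^-/r_x^+$. Iterating (the boundary term vanishes whenever the walk is transient to $+\infty$) gives
\[
g_0 = \sum_{i\ge 0}\frac{1}{r_{-i}^+}\prod_{j=0}^{i-1}\rho_{-j} = \hat S,
\]
so that $\bbE[\hat S] = \bbE\otimes E_0^\xi[H_1]$, which is the continuous-time analogue of the discrete-time identity $\bbE[\bar S] = \bbE\otimes E_0^{\,\o}[\tau_1]$.

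Third, I would apply Birkhoff's ergodic theorem to $H_n$. The embedded jump chain of $(Y_t)$ is a discrete-time RWRE with the same ratios $\rho_x$, so by the Zeitouni dichotomy (\cite[Thm.~2.1.2]{Z}) the walk is either transient to $+\infty$, transient to $-\infty$, or recurrent, according to the sign of $\bbE[\log\rho_0]$. In case (a), a Solomon-type argument shows that $\bbE[\hat S]<\infty$ forces $\bbE[\log\rho_0]\le 0$, and hence transience to $+\infty$ and $H_n<\infty$ a.s. By the strong Markov property at $H_{n-1}$, the quenched mean increments are $E^\xi[H_n-H_{n-1}] = \psi(\sigma^{n-1}\xi)$ with $\psi(\xi):=E_0^\xi[H_1]$; Birkhoff applied to the stationary sequence $\psi\circ\sigma^k$, combined with a quenched law of large numbers absorbing the centred fluctuations $H_n-H_{n-1}-\psi(\sigma^{n-1}\xi)$, yields $H_n/n\to\bbE[\hat S]$ $\bbP\otimes P_0^\xi$-a.s.

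Finally, the bracket $H_n\le t<H_{n+1}$ gives $n/H_{n+1}\le Y_t/t\le n/H_n$, both bounds tending to $1/\bbE[\hat S]$, proving (a). Part (b) follows by applying (a) to the reflected walk $-Y_t$, which swaps $r_x^-$ with $r_x^+$ and turns $\hat S$ into $\hat F$. In case (c), either the walk is recurrent (whence $Y_t/t\to 0$ by 1D recurrence of the jump chain) or transient, in which case the ergodic argument above gives $H_n/n\to +\infty$ and hence $Y_t/t\to 0$. I expect the main obstacle to be the a.s.\ control of the centred hitting-time fluctuations in step three, especially when $H_1$ has heavy tails or $\bbE[\hat S]=+\infty$; this is the standard technical step in the 1D ergodic-theorem approach but requires truncation or a martingale estimate to be carried out cleanly.
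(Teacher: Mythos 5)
Your overall architecture (hitting times, the recursion $g_x = 1/r_x^+ + \rho_x g_{x-1}$ identifying $\bbE[\hat S(\l)]$ with the annealed mean of $\tau_1 = T_1 - T_0$, and Birkhoff applied to the stationary ergodic sequence of crossing times) is exactly the paper's, and your reduction of $\l$ into the environment and your treatment of parts (b) and (c) are fine. However, there is a genuine gap at the final step of part (a): the sandwich $n/H_{n+1}\le Y_t^{\xi,\l}/t\le n/H_n$ for $H_n\le t<H_{n+1}$ is false on the left. If $H_n\le t<H_{n+1}$ then indeed $Y_t^{\xi,\l}\le n$ (the walk has not yet reached $n+1$), but $Y_t^{\xi,\l}$ can be far below $n$ because the walk may have backtracked arbitrarily deep after first hitting $n$; the process $Y_t^{\xi,\l}$ is not monotone, so only the upper bound $\limsup_t Y_t^{\xi,\l}/t\le 1/\bbE[\hat S(\l)]$ comes for free from $T_n/n\to\bbE[\hat S(\l)]$. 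The matching lower bound $\liminf_t Y_t^{\xi,\l}/t\ge 1/\bbE[\hat S(\l)]$ is precisely the continuous-time analogue of \cite[Lemma 2.1.17]{Z} and is where the new difficulty lies: with unbounded jump rates the walk could in principle backtrack a macroscopic distance in negligible time. The paper's proof of this step shows that on the event that the walk drops below $\ell-2\ep\ell$ between $T_\ell$ and $T_{\ell+1}$, the crossing time $\tau_{\ell+1}$ stochastically dominates a sum of order $\ep\ell$ i.i.d.\ exponentials with bounded mean (using the ergodic density of sites with $r_x^+(\l)+r_x^-(\l)\le M$), and then combines a Cram\'er bound with Borel--Cantelli and the fact that $\tau_{\ell+1}/\ell\to 0$ to rule this out eventually. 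Some such argument is unavoidable and is missing from your proposal; the obstacle you flag (centred fluctuations of $H_n-H_{n-1}$) is actually a non-issue, since one applies the ergodic theorem directly to the annealed stationary sequence $\{\tau_i\}_{i\ge1}$ rather than to the quenched means.

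A secondary, repairable imprecision: in the identity $g_0=\hat S(\l)$ you dismiss the boundary term by saying it ``vanishes whenever the walk is transient to $+\infty$.'' Transience alone does not control $\rho_0(\l)\cdots\rho_{-m}(\l)\,E_0^{\theta^{-m-1}\xi,\l}[\tau_1]$, since the last factor may be large or infinite. The paper handles this by truncating $\tau_1$ at a level $M$, observing that finiteness of $\bbE[\hat S(\l)]$ forces $\frac{1}{r_{-m}^+(\l)}\rho_0(\l)\cdots\rho_{-m+1}(\l)\to 0$ a.s., and extracting via ergodicity a subsequence $m_k$ with $r_{-m_k}^+(\l)\ge\varepsilon$ along which the bare product vanishes; you should incorporate a similar truncation.
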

The proof of Proposition \ref{ariccia}  \ale{has some intersection with the one for the discrete  time case, see \cite[Theorem 2.1.9]{Z}.  The main difference is related to the continuous time   version of  \cite[Lemma 2.1.17]{Z}, since  new  phenomena  have to be controlled. For completeness,  the proof of Proposition \ref{ariccia}   is given in Appendix \ref{nonlaso}}.

For the computation of $\E[\hat S(\l)]$ and $\E[\hat F(\l)]$ we have the following fact (which can be easily verified):
\begin{Lemma}\label{porchetta}
It holds:
\begin{align}
& \E[\hat S(\l)]
		= \E\Big[\frac{\e^{-\l}}{r_0^+}\big(1+\sum_{i=1}^\infty\rho_1\rho_2\cdots\rho_i\e^{-2\l i}\big)\Big]\,, \label{argento1}\\
& \E[\hat F(\l)]
	= \E\Big[\frac{\e^{\l}}{r_0^-}\big(1+\sum_{i=1}^\infty\rho_{-1}^{-1}\rho_{-2}^{-1}\cdots\rho^{-1}_{-i}\e^{2\l i}\big)\Big]\,. \label{argento2}
\end{align}
\end{Lemma}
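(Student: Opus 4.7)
The plan is to obtain both identities by elementary bookkeeping: substitute the explicit $\l$--dependence of the rates and ratios, factor out the pure exponentials, and then use shift invariance of $\bbP$ (Assumption \ref{uffina}) to relocate each summand so that its prefactor is $1/r_0^{\pm}$. Finally, Tonelli allows one to exchange expectation and sum since all terms are non-negative.

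First I would unfold the definitions. Since $r_x^+(\l)=r_x^+\e^{\l}$ and $\rho_x(\l)=\rho_x\e^{-2\l}$, one obtains
\begin{equation*}
\hat S(\l)=\frac{\e^{-\l}}{r_0^+}+\e^{-\l}\sum_{i=1}^{\infty}\frac{\e^{-2\l i}}{r_{-i}^+}\,\rho_0\rho_{-1}\cdots\rho_{-(i-1)},
\end{equation*}
and analogously
\begin{equation*}
\hat F(\l)=\frac{\e^{\l}}{r_0^-}+\e^{\l}\sum_{i=1}^{\infty}\frac{\e^{2\l i}}{r_{i}^-}\,\rho_0^{-1}\rho_{1}^{-1}\cdots\rho_{i-1}^{-1}.
\end{equation*}

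Next I would take expectations term-by-term (legitimate by Tonelli, as everything is non-negative and $[0,+\infty]$-valued). For the $i$-th summand in $\hat S(\l)$, I apply the shift $\theta_{i}$ of the environment: by stationarity of $\bbP$,
\begin{equation*}
\E\Bigl[\frac{1}{r_{-i}^+}\,\rho_0\rho_{-1}\cdots\rho_{-(i-1)}\Bigr]=\E\Bigl[\frac{1}{r_{0}^+}\,\rho_i\rho_{i-1}\cdots\rho_1\Bigr].
\end{equation*}
Pulling the common factor $\e^{-\l}/r_0^+$ inside the expectation and re-collecting the series yields \eqref{argento1}. The identity \eqref{argento2} follows by the same argument with the shift $\theta_{-i}$, which turns $1/r_i^-$ into $1/r_0^-$ and $\rho_0^{-1}\cdots\rho_{i-1}^{-1}$ into $\rho_{-i}^{-1}\cdots\rho_{-1}^{-1}$.

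There is no real obstacle: the only point worth watching is the sign of the shift and the re-indexing of the product, which one has to do once for each of the two formulas. Tonelli is unconditional here because the integrand is non-negative, so both sides of the two identities are simultaneously finite or infinite, which is all that is needed for the subsequent applications of Proposition \ref{ariccia}.
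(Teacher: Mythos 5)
Your proof is correct: unfolding the $\l$--dependence, applying Tonelli (valid since all summands are non-negative), and shifting each term by $\pm i$ via stationarity is exactly the elementary verification the paper leaves to the reader (the paper states the lemma "can be easily verified" and gives no proof). The re-indexing of the products and the signs of the shifts are all handled correctly.
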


Similarly to the discrete time case we introduce the thresholds $\l_-$ and $\l_+$ as 
\[  \l_-:= \sup \{ \l\in \bbR  \,:\, v_Y(\l) <0\}\,, \qquad \l_+:= \inf \{ \l \in \bbR \,:\, v_Y(\l) >0\}\,,\]
with the convention that  \ale{$\sup \emptyset =-\infty$ and $\inf \emptyset =+\infty$. }
%
 
\smallskip


Having  Proposition ~\ref{ariccia} and Lemma \ref{porchetta}, by exactly the same arguments used to derive Proposition \ref{marte1} we have:

\begin{Proposition}\label{marte10}  
 The velocity 
$v_Y(\l) $ is   increasing in $\l$ and $\l_-\leq \l_+$. Moreover,    $v_Y(\l)$ is strictly increasing and analytic on $(-\infty , \l_-)$ and on $(\l_+, +\infty)$, while it is zero on $(\l_-, \l_+)$.
 \end{Proposition}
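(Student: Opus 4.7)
The plan is to follow line-by-line the discrete-time proof of Proposition \ref{marte1}, replacing Lemma \ref{buddha} by Lemma \ref{porchetta} and Proposition \ref{ferb} by Proposition \ref{ariccia}. First I would use the expansions of Lemma \ref{porchetta}: pushing the sums inside the expectations gives
\[\E[\hat S(\l)] = \E\Big[\frac{\e^{-\l}}{r_0^+}\Big] + \sum_{i\geq 1} \E\Big[\frac{\e^{-(2i+1)\l}}{r_0^+}\rho_1\rho_2\cdots\rho_i\Big],\]
and an analogous expression for $\E[\hat F(\l)]$. Since every summand is a nonnegative multiple of a monotone exponential in $\l$, the map $\l\mapsto\E[\hat S(\l)]:\R\to[0,+\infty]$ is non-increasing while $\l\mapsto\E[\hat F(\l)]:\R\to[0,+\infty]$ is non-decreasing. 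Plugging this into Proposition \ref{ariccia} forces $v_Y$ to be non-decreasing in $\l$, which immediately gives $\l_-\leq\l_+$ and $v_Y\equiv 0$ on $(\l_-,\l_+)$.

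Next, for the strict monotonicity on $(\l_+,+\infty)$ I would repeat the argument of Proposition \ref{marte1}. By definition $\l_+=\inf\{\l:\E[\hat S(\l)]<\infty\}$, hence $\hat S(\l)<\infty$ $\P$-a.s.~for $\l>\l_+$. For any $\l_+<\l<\l'$, every single term in the series defining $\hat S$ is strictly decreased, so $\hat S(\l)>\hat S(\l')$ $\P$-a.s.; since both expectations are finite this transfers to $\E[\hat S(\l)]>\E[\hat S(\l')]$, and item (a) of Proposition \ref{ariccia} yields $v_Y(\l)<v_Y(\l')$. The symmetric argument on $(-\infty,\l_-)$ uses $\hat F$ and item (b).

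Finally, analyticity on $(\l_+,+\infty)$ reduces to analyticity of $h(\l):=\E[\hat S(\l)]$ on this interval, since $h$ is finite and strictly positive there and $v_Y=1/h$. Setting $z=\e^{-2\l}$ and pulling the scalar prefactor $\e^{-\l}$ outside, $h(\l)$ becomes a power series in $z$ with nonnegative real coefficients $\E[\rho_1\cdots\rho_i/r_0^+]$ which converges at every real $\l>\l_+$, so the appendix Lemma \ref{latticini} — itself an application of the Pringsheim--Boas theorem — applies verbatim. The main (minor) obstacle I would double-check is that the extra factor $1/r_0^+$ appearing in the continuous-time coefficients, as compared to the discrete-time series in Lemma \ref{buddha}, causes no problem: the coefficients remain nonnegative reals, which is all that the Pringsheim--Boas argument needs. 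The case $(-\infty,\l_-)$ is handled symmetrically from $\E[\hat F(\l)]$ via item (b) of Proposition \ref{ariccia}.
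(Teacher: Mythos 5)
Your proposal is correct and is precisely what the paper does: the paper's proof of Proposition \ref{marte10} consists of the single remark that, given Proposition \ref{ariccia} and Lemma \ref{porchetta}, the arguments of Proposition \ref{marte1} apply verbatim. Your additional check that the coefficients $\E[\rho_1\cdots\rho_i/r_0^+]$ are still nonnegative reals, so that Lemma \ref{latticini} (Pringsheim--Boas) applies despite the extra $1/r_0^+$ factor, is exactly the one point worth verifying and you verify it correctly.
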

 \begin{Remark}\label{marte20} Due to the above proposition,  $v_Y(\l)$ is analytic everywhere with possible exception at $\l_-, \l_+$, where it can be irregular (even discontinuous, see Section \ref{sec_posate}). 
 \end{Remark}

By the same arguments used to prove Proposition  \ref{tazzina} one easily  gets the following:
\begin{Proposition} \label{tazzinabis} Assume that $\bbE\bigl[ 1/r_0^\pm \bigr]<+\infty$,  $\bbE\bigl[ \log r^\pm _0\bigr]<+\infty$ and 
  $\l_-=\l_+$. Then    $v_Y(\l)$ is continuous, $\l_-=\l_+=\E[\log\rho_0]$ and $v_Y(\l_\pm)=0$.
\end{Proposition}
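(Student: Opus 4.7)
The plan is to replicate the proof of Proposition~\ref{tazzina} with $\hat S, \hat F$ playing the role of $\bar S, \bar F$ and with Lemma~\ref{porchetta} in place of Lemma~\ref{buddha}. Continuity (indeed analyticity) of $v_Y$ away from $\l_\pm$ is already furnished by Proposition~\ref{marte10}, so under the hypothesis $\l_-=\l_+$ the entire task reduces to identifying this common threshold with $\bar\l:=\E[\log\rho_0]$, checking $v_Y(\bar\l)=0$, and ruling out a jump at $\bar\l$.

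First I would establish the two-sided bound $\l_-\leq\bar\l\leq\l_+$. Starting from
\[
\E[\hat S(\l)] = \e^{-\l}\E\!\left[\tfrac{1}{r_0^+}\right] + \e^{-\l}\sum_{i=1}^{\infty}\E\!\left[\tfrac{\rho_1\cdots\rho_i}{r_0^+}\right]\e^{-2\l i},
\]
I would apply Jensen's inequality to $\log(\rho_1\cdots\rho_i/r_0^+)$ to obtain
\[
\E\!\left[\tfrac{\rho_1\cdots\rho_i}{r_0^+}\right] \geq C\,\e^{\bar\l\,i},\qquad C:=\e^{-\E[\log r_0^+]}\in(0,+\infty),
\]
where finiteness and positivity of $C$ rests on $\E[\log r_0^+]<+\infty$, while the $i=0$ term is separately controlled by $\E[1/r_0^+]<+\infty$. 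Plugging this bound into the series produces a geometric obstruction that forces $\E[\hat S(\l)]=+\infty$ whenever $\l\leq\bar\l$, hence $\l_+\geq\bar\l$ via Proposition~\ref{ariccia}(a). The symmetric manipulation of $\E[\hat F(\l)]$, invoking $\E[1/r_0^-]<+\infty$ and $\E[\log r_0^-]<+\infty$, gives $\l_-\leq\bar\l$; combined with the assumed equality $\l_-=\l_+$, both are squeezed to $\bar\l$. The same Jensen bound evaluated at $\l=\bar\l$ shows $\E[\hat S(\bar\l)]=\E[\hat F(\bar\l)]=+\infty$, so $v_Y(\bar\l)=0$ by Proposition~\ref{ariccia}(c).

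Second, continuity at $\bar\l$ I would prove by contradiction, mirroring Proposition~\ref{tazzina}. A right-discontinuity would force $\lim_{\ep\downarrow 0}\E[\hat S(\bar\l+\ep)]<+\infty$; but the same Jensen lower bound, evaluated at $\l=\bar\l+\ep$, gives for every $\ep>0$ an explicit geometric series that blows up as $\ep\downarrow 0$, contradicting the assumed finiteness of the limit. A left-discontinuity is ruled out by the symmetric argument applied to $\hat F$.

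The only delicate point compared to the discrete-time proof is the extra factor $1/r_0^{\pm}$ appearing inside every summand of $\hat S$ and $\hat F$. The proposition's hypotheses are precisely calibrated to absorb this: $\E[1/r_0^{\pm}]<+\infty$ takes care of the stand-alone $i=0$ term, and $\E[\log r_0^{\pm}]<+\infty$ is exactly what Jensen's inequality needs in order to produce a finite positive multiplicative constant in the lower bound, so that the geometric-series argument of Proposition~\ref{tazzina} transfers with no essential change.
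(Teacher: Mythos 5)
Your overall scheme is exactly the paper's: the paper proves Proposition \ref{tazzinabis} by simply invoking the same arguments as Proposition \ref{tazzina}, and your proposal spells out precisely that transfer, with $\hat S,\hat F$ and Lemma \ref{porchetta} in place of $\bar S,\bar F$ and Lemma \ref{buddha}, and with Proposition \ref{marte10} covering everything away from the common threshold. Your treatment of the extra factor $1/r_0^{\pm}$ is also the right use of the extra hypotheses: Jensen applied to $\log(\rho_1\cdots\rho_i/r_0^+)$ produces the constant $C=\e^{-\E[\log r_0^+]}$, whose positivity uses $\E[\log r_0^+]<+\infty$, while $\E[1/r_0^\pm]<\infty$ guarantees $\E[\log r_0^\pm]>-\infty$ (since $\log^+(1/r_0^\pm)\leq 1/r_0^\pm$), so $\log r_0^\pm\in L^1(\P)$, $\bar\l=\E[\log\rho_0]$ is finite, and Jensen is legitimately applicable; this is the only point where the continuous-time case needs anything beyond the discrete-time proof.

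There is, however, one step that does not follow as written. From your bound $\E[\rho_1\cdots\rho_i/r_0^+]\geq C\,\e^{\bar\l i}$, the series $\sum_{i\geq1}\E[\rho_1\cdots\rho_i/r_0^+]\,\e^{-2\l i}$ is bounded below by $C\sum_{i\geq1}\e^{(\bar\l-2\l)i}$, which diverges precisely when $2\l\leq\bar\l$, i.e.\ $\l\leq\bar\l/2$ --- not ``whenever $\l\leq\bar\l$'' as you assert; the same factor affects your blow-up-as-$\ep\downarrow0$ continuity argument, which works at $\tfrac12\bar\l$ but not at $\bar\l$ when $\bar\l\neq0$. Carried out consistently, your method yields $\l_-\leq\tfrac12\E[\log\rho_0]\leq\l_+$ and, under $\l_-=\l_+$, identifies the common value as $\tfrac12\E[\log\rho_0]$, with $v_Y=0$ and continuity there. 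To be fair, this factor of $2$ is not your invention: since $\rho_x(\l)=\rho_x\e^{-2\l}$, the same discrepancy sits in the paper's proof of Proposition \ref{tazzina} (where $\e^{-\l(i+1)}$ appears in place of Lemma \ref{buddha}'s $\e^{-2\l(i+1)}$) and hence in the printed identification $\l_\pm=\E[\log\rho_0]$ itself; e.g.\ constant rates $r_x^+\equiv 1$, $r_x^-\equiv a$ give $\l_-=\l_+=\tfrac12\log a=\tfrac12\E[\log\rho_0]$. So the approach is correct and complete in structure, but you should either track the exponent and state the conclusion as $\l_-=\l_+=\tfrac12\E[\log\rho_0]$, or accept that the asserted divergence of $\E[\hat S(\l)]$ on all of $(-\infty,\bar\l]$ is not delivered by your displayed bound.
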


In the continuous time setting, the RCM has always a regular asymptotic velocity $v_Y(\l)$:
\begin{Example}\label{antonio}
For the \ale{continuous time } RCM  \ale{satisfying our main assumption (see Remark \ref{fieno})} it holds
\begin{align}\label{lento}
v_Y(\l)
	=\frac{\e^{\l}-\e^{-\l}}{\E[1/c_0]}\,.
\end{align}
In particular $v_Y\equiv 0$ iff $\E[1/c_0]=+\infty$. Moreover, $v_Y$ is always an analytic function of $\l$.
\end{Example}
\begin{proof}[Discussion of Example \ref{antonio}]
Since $\rho_i = \frac{c_{i-1}}{c_i}$  from \eqref{argento1} we have
\begin{equation}
\begin{split}
\E[\hat S(\l)]& = \E\Big[\frac{\e^{-\l}}{c_0}\Big(1+\sum_{i=1}^\infty\frac{c_0}{c_1}\frac{c_1}{c_2}\cdots\frac{c_{i-1}}{c_i}\e^{-2\l i}\Big)\Big]=
 \E\Big[\frac{\e^{-\l}}{c_0} +   \sum_{i=1}^\infty \frac{\e^{-\l}}{c_i }   \e^{-2\l i}\Big]\\
& 
	= \E\Big[\frac1{c_0}\Big]\e^{-\l}\sum_{i=0}^{\infty}\e^{-2\l i}
	\,,
\end{split}
\end{equation} thus implying that 
\begin{equation}\label{panino1}
\E[\hat S(\l)] =
\begin{cases}
\E\Big[\frac1{c_0}\Big]\frac{1}{\e^\l-\e^{-\l}} & \l >0\,,\\
+ \infty & \l \leq 0 \,.
\end{cases}
\end{equation}
Similarly   from \eqref{argento2} we have 
\begin{equation*}
\begin{split}
\E[\hat F(\l)]& = 
\ale{
\E\Big[\frac{\e^{\l}}{c_{-1}}\Big(1+\sum_{i=1}^\infty\frac{c_{-1}}{c_{-2} }\frac{c_{-2}}{c_{-3} }\cdots\frac{c_{-i}}{c_{-i-1}}\e^{2\l i}\Big)\Big]
}
=
 \E\Big[\frac{\e^{\l}}{c_{-1}} +   \sum_{i=1}^\infty \frac{\ale{\e^{\l}}}{ \ale{c_{-i-1}} }   \e^{2\l i}\Big]\\
& 
	= \E\Big[\ale{\frac1{c_{0}}}\Big]\e^{\l}\sum_{i=0}^{\infty}\e^{2\l i}
	\,,
\end{split}
\end{equation*}
thus implying that 
\begin{equation}\label{panino2}
\E[\hat F(\l)] =
\begin{cases}
\E\Big[\frac1{c_0}\Big]\frac{1}{\e^{-\l}-\e^{\l}} & \l <0\,,\\
+ \infty & \l \geq  0 \,.
\end{cases}
\end{equation}
From 
\eqref{panino1}, \eqref{panino2} and Proposition \ref{ariccia} we get \eqref{lento}. The conclusion then follows from \eqref{lento}. \end{proof}

\subsection{Reflection invariance}\label{riflessionec}
In the case of reflection invariant environments (cf.~Assumption \ref{jesus_cont}) we prove an analogous of Proposition \ref{fiumicino}:
 
 \begin{Proposition}\label{redenzione} 
Suppose Assumption \ref{jesus_cont} to be satisfied. 
Then it holds
\begin{align}\label{flauto} 
v_Y(\l)=-v_Y(-\l)\,. 
\end{align}
In particular, $v_Y(0)=0$ and, if $v_Y$ has $n$-th derivative at $0$ with $n$ even, then  this derivative must be $0$.
Moreover, the following dichotomy holds for $\l\geq 0$:
\begin{align}\label{gioventu}
\begin{cases}
	\E[\hat S(\l)]<+\infty&\Longrightarrow 
	\quad v_Y(\l)={1}\Big/{\E\Big[\frac{\e^{-\l}}{r_0^+}\big(1+\sum_{i=1}^\infty\rho_1\rho_2\cdots\rho_i\e^{-2\l i}\big)\Big]}\in(0,+\infty)\\
	\E[\hat S(\l)]=+\infty &\Longrightarrow \quad v_Y(\l)=0 \,.
\end{cases}
\end{align}

\end{Proposition}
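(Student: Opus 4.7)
The plan is to mirror the discrete-time argument of Proposition \ref{fiumicino}, the only new wrinkle being that in the continuous-time case the series $\hat S$ and $\hat F$ carry environment-dependent prefactors $1/r^\pm$ beyond the $\rho$'s.

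The cleanest route to the symmetry \eqref{flauto} is to introduce the reflected environment $\tilde\xi$ defined by $\tilde r_x^\pm := r_{-x}^\mp$ (so that $\tilde\rho_x = \rho_{-x}^{-1}$). A straightforward index substitution, using $\tilde r_{-i}^+(\l)=r_i^-(-\l)$ and $\tilde\rho_{-j}(\l)=\rho_j^{-1}(-\l)$, shows that the series $\hat S$ and $\hat F$ evaluated in the environment $\tilde\xi$ become respectively $\hat F$ and $\hat S$ evaluated in the environment $\xi$ with $\l$ replaced by $-\l$. Assumption \ref{jesus_cont} gives $\tilde\xi\stackrel{d}{=}\xi$, hence $\bbE[\hat S(\l)]=\bbE[\hat F(-\l)]$ and $\bbE[\hat F(\l)]=\bbE[\hat S(-\l)]$. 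Going through the three cases of Proposition \ref{ariccia} then produces $v_Y(\l)=-v_Y(-\l)$. (Equivalently at the process level, the jump rates of $(Y_t^{\xi,\l})_{t\ge 0}$ and of $(-Y_t^{\tilde\xi,-\l})_{t\ge 0}$ agree, so they have the same law, and the velocity is deterministic.) Evaluating \eqref{flauto} at $\l=0$ yields $v_Y(0)=0$, and differentiating $n$ times at $0$ gives $v_Y^{(n)}(0)=(-1)^{n+1}v_Y^{(n)}(0)$, which forces the even-order derivatives to vanish.

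For the dichotomy on $[0,+\infty)$: by Proposition \ref{marte10} the function $v_Y$ is nondecreasing, and together with $v_Y(0)=0$ this yields $v_Y(\l)\ge 0$ for all $\l\ge 0$. But case (b) of Proposition \ref{ariccia} would produce a strictly negative velocity whenever $\bbE[\hat F(\l)]<+\infty$, so this case is ruled out on the positive half-line, i.e.\ $\bbE[\hat F(\l)]=+\infty$ for every $\l\ge 0$. Only cases (a) and (c) of Proposition \ref{ariccia} remain, which give precisely the dichotomy \eqref{gioventu}, with the explicit form of the denominator supplied by the first identity of Lemma \ref{porchetta}. The only mildly delicate part of the plan is the index substitution implementing the reflection symmetry for $\hat S$ and $\hat F$; once it is carried out, the rest of the proof is bookkeeping.
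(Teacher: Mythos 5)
Your proof is correct, and for the symmetry \eqref{flauto} it is essentially the paper's argument: the key identity $\E[\hat F(\l)]=\E[\hat S(-\l)]$ is exactly \eqref{pongo}, which the paper derives by applying Assumption \ref{jesus_cont} to the expectation formulas \eqref{argento1}--\eqref{argento2} of Lemma \ref{porchetta}, while you obtain it by reflecting the environment at the level of the series $\hat S,\hat F$ (or of the process itself); your substitutions $\tilde r_{-i}^+(\l)=r_i^-(-\l)$ and $\tilde\rho_{-j}(\l)=\rho_j^{-1}(-\l)$ are correct, so this is the same computation in different clothing. Where you genuinely diverge is the dichotomy, whose crux is that $\E[\hat F(\l)]=+\infty$ for all $\l\ge 0$. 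The paper proves this via the comparison $\E[\hat F(\l)]=\E\bigl[\tfrac{\e^{\l}}{r_0^+}\bigl(1+\sum_{i\ge1}\rho_1\cdots\rho_i\e^{2\l i}\bigr)\bigr]\ge\E[\hat S(\l)]$ for $\l\ge0$, which settles the case $\E[\hat S(\l)]=\infty$, and then disposes of the case $\E[\hat S(\l)]<\infty$ by the contradiction between parts (a) and (b) of Proposition \ref{ariccia}. You instead rule out part (b) wholesale: $v_Y(0)=0$ from \eqref{flauto} together with the monotonicity of $v_Y$ from Proposition \ref{marte10} gives $v_Y(\l)\ge0$ on $[0,\infty)$, whereas case (b) would force $v_Y(\l)<0$. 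This is logically sound and a bit shorter, and there is no circularity since Proposition \ref{marte10} rests only on Proposition \ref{ariccia} and Lemma \ref{porchetta}; its only cost is importing the monotonicity statement, while the paper's comparison argument is self-contained within the reflection identity. In both treatments the explicit denominator in \eqref{gioventu} is just \eqref{argento1}.
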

\begin{proof}
By Assumption \ale{\ref{jesus_cont},} \eqref{argento1} and \eqref{argento2} we have that \begin{align}\label{pongo}
\E[\hat F(\l)]
	&=\E\Big[\frac{\e^{\l}}{r_0^-}\big(1+\sum_{i=1}^\infty\rho_{-1}^{-1}\rho_{-2}^{-1}\cdots\rho_{-i}^{-1}\e^{2\l i}\big)\Big]\nonumber\\
	&=\E\Big[\frac{\e^{\l}}{r_0^+}\big(1+\sum_{i=1}^\infty\rho_1\rho_2\cdots\rho_i\e^{2\l i}\big)\Big]
	=\E[\hat S(-\l)]\,,
\end{align}
which implies $v_Y(\l)=-v_Y(-\l)$ by Proposition \ref{ariccia}. The second property, 
\ale{concerning the derivatives of $v_Y$, follows from \eqref{flauto}. Finally, to  prove the dichotomy, } 
it is enough to check that $\E[\hat F(\l)]=\infty$ whenever $\l\geq 0$.
From \eqref{pongo} we clearly see that, for $\l\geq 0$, $\E[\hat F(\l)]=\E\Big[\frac{\e^{\l}}{r_0^+}\big(1+\sum_{i=1}^\infty\rho_1\rho_2\cdots\rho_i\e^{2\l i}\big)\Big]\geq \E[\hat S(\l)]$. This implies that, whenever $\l\geq 0$ and $\E[\hat S(\l)]=\infty$, also $\E[\hat F(\l)]=\infty$. If on the other hand we have $\l\geq 0$ and $\E[\hat S(\l)]<\infty$, then again we must have $\E[\hat F(\l)]=\infty$. Otherwise we would have, by parts (a) and (b) of Proposition \ref{ariccia}, $v_Y(\l)=\E[\hat S(\l)]^{-1}$ and at the same time $v_Y(\l)=-\E[\hat F(\l)]^{-1}$, which is a contradiction. 
\end{proof}

\subsection{Examples of models with irregular asymptotic velocity $v_Y(\l)$}\label{sec_posate}

\begin{Example}\label{lorenzo3}
Recall Example \ref{lorenzo1} and in particular the random variables $r(x,x\pm 1)$ introduced there. 
Consider the continuous time random walk with jump rates $r_x^\pm:=r(x,x\pm 1)$. Then its asymptotic velocity $v_Y(\l)$ is discontinuous at $\l_+$.
\end{Example}
The proof of the above statement follows from the same arguments presented in Section \ref{lorenzo1_esteso} and therefore is omitted.

\medskip

\begin{Example}\label{lorenzo4} The following model is reflection invariant and its asymptotic velocity $v_Y(\l)$ has no second derivative at $\l_+=\l_-=0$. 
The  jump rates 
 are given by the following. First of all we sample two independent sequences of  positive   i.i.d.~  r.v.'s $(a_m^+)_{m\in\Z}$ and $(a_m^-)_{m\in\Z}$. We call $A:=\E[a_0^+]$ and $B:=\E[1/a_0^+]$  and suppose $A$ and $B$ to be finite.
Then we toss a fair coin and do the following  (see Figure \ref{pocoyo1000}):
\begin{itemize}
	\item If it comes Heads, for all $m\in\Z$ we put 
	\begin{align*}
		\begin{cases}
		r_{2m+1}^+&:=a_m^+=r_{2m+2}^-\,, \\
		r_{2m+1}^-&:=a_m^-=r_{2m+2}^+\,;
		\end{cases}
	\end{align*}
	\item If it comes Tails, for all $m\in\Z$ we put 
	\begin{align*}
		\begin{cases}
		r_{2m}^+&:=a_m^+=r_{2m+1}^-\,,\\
		\ale{r_{2m}^-}&:=a_m^-=r_{2m+1}^+\,.
		\end{cases}
	\end{align*}
\end{itemize}
\end{Example}
The above Example \ref{lorenzo4} is discussed in Section \ref{lorenzo4_esteso}.


\begin{figure}[!ht]
  \centering
\begin{center}
	\centering
	\mbox{\hbox{\includegraphics[width=0.7\textwidth]{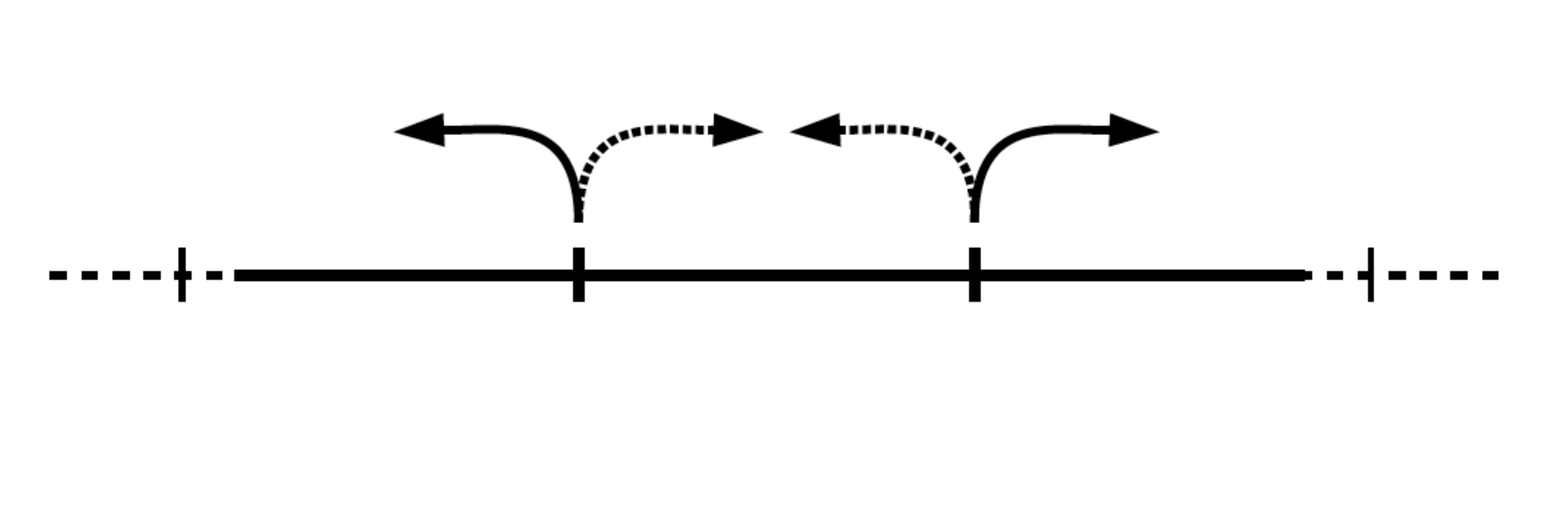}}}
	\end{center}
  \setlength{\unitlength}{0.12\textwidth}  
	\begin{picture}(0,0)
		\put(7.6,0.5){$\tau_k^+$}
		\put(-1.25,1.95){{$a_m^-$}}	 
		\put(0.9,1.95){{$a_m^-$}}	 
		\put(0.22,1.95){{$a_m^+$}}	
		\put(-0.55,1.95){{$a_m^+$}}
		\put(0.4,0.75){{$2m+2$}}	
		\put(-1.1,0.75){{$2m+1$}}  
		\put(2,0.75){\textit{Heads}  }
		\put(0.4,0.3){{$2m+1$}}	
		\put(-0.9,0.3){{$2m$}}  
		\put(2,0.3){\textit{Tails}  }
	\end{picture}
	\caption{Jump rates for Example \ref{lorenzo4}}\label{pocoyo1000}
\end{figure}


\section{Einstein Relation}\label{sectioneinstein}

In this section we give the proof of the Einstein relation for a discrete time random walk among random conductances. 
 We point out that any 1d nearest-neighbor random walk in random environment, for which  the process \emph{environment viewed from the walker}  admits a reversible distribution, is indeed a random walk among random conductances.

For the continuous time case the Einstein relation follows easily from the expression \eqref{lento} of $v_Y(\l)$, since  the diffusion coefficient of the unbiased random walk is given by   $\s^2(0)= 2/\bbE[ 1/c_0] $.
 We recall that, assuming $\bbE[c_0]<\infty$,  one can prove   an annealed invariance principle with diffusion coefficient $2/\bbE[ 1/c_0] $ (cf. \cite[Eq. (4.22)]{demasi} and references therein), while a quenched invariance principle can be proved by means of the corrector 
under the additional assumption $\bbE[1/c_0]<\infty$. 

For what concerns the discrete time random walk among random conductances we recall that, 
assuming $A:=\bbE[c_0]$ and  $B:=\bbE[1/c_0]$ to be finite,  in the unbiased case (i.e. when $\l=0$)  an annealed (also quenched)  CLT  holds with diffusion coefficient $\s^2(0)=1/AB$ (see   \cite[Eq. (4.20), (4.22)]{demasi} and  \cite[Eq. (4.20) and Exercise 3.12]{B11}).

We prove the  Einstein relation under two different sets of assumptions: the first one requires the ergodicity of the conductances and some moments conditions; we point out that this is the equivalent set of conditions that \cite{LD16} would require in our setting, but we give an alternative, shorter proof. We extend this result including \ale{a} different hypothesis, just requiring the weakest possible integrability of the conductances and very mild mixing conditions.
\begin{Theorem}\label{einstein}
\ale{Consider  a random sequence of conductances $(c_x)_{x\in\Z}$, stationary and ergodic w.r.t. shifts.}
Suppose \ale{that   at least} one of the following two conditions holds:
\begin{itemize}
\item[(i)] \ale{$c_0\in L^p(\P)$ and $1/c_0\in L^q(\P)$ with  $p,q\in [1,\infty]$, $p^{-1}+q^{-1}=1$};
\item[(ii)] \ale{$\lim_{i\to\infty}\E[c_0/c_i]=\E[c_0]\E[1/c_0]$,
with $\E[c_0]<\infty$ and $\E[1/c_0]<\infty$.}
\end{itemize}
Then  \ale{the discrete time RCM satisfies the Einstein relation: i.e. $\frac{d }{d\l}  v_X(\l)  |_{\l=0}=\sigma^2(0)$.
}
\end{Theorem}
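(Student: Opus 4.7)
The plan is to compute $v_X'(0)$ directly from the explicit representation \eqref{scacco} and compare it with the known value $\sigma^2(0) = 1/(AB)$ for the unbiased discrete-time RCM, where $A := \E[c_0]$ and $B := \E[1/c_0]$. Under either hypothesis one has $A, B < \infty$ and, by stationarity, $\E[\log\rho_0] = 0$. Moreover $\sum_{i\geq 1}\E[c_0/c_i] = +\infty$ in both cases (in (ii) because $\E[c_0/c_i]\to AB > 0$; in (i) via the Ces\`aro convergence established below), so $\E[\bar S(0)] = \E[\bar F(0)] = +\infty$ and $v_X(0) = 0$. Setting $a_i := \E[c_0/c_i] \geq 0$, formula \eqref{scacco} yields
\[
v_X'(0^+) \;=\; \lim_{\l \downarrow 0}\frac{v_X(\l)}{\l} \;=\; \lim_{\l \downarrow 0}\frac{1}{\l \,+\, 2\l \sum_{i\geq 1} a_i\,\e^{-2\l i}},
\]
so the task reduces to proving the Tauberian-type limit $\lim_{\l \downarrow 0} 2\l \sum_{i\geq 1} a_i\,\e^{-2\l i} = AB$, which would then give $v_X'(0^+) = 1/(AB) = \sigma^2(0)$.

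Under (ii) this limit is direct: write $a_i = AB + r_i$ with $r_i \to 0$; the principal part $2\l AB \sum_{i\geq 1}\e^{-2\l i} = AB \cdot 2\l \e^{-2\l}/(1-\e^{-2\l})$ converges to $AB$, while the remainder $2\l\sum r_i \e^{-2\l i}$ is controlled by an $\varepsilon$--$N$ splitting on the index $i$. Under (i) the coefficients $a_i$ are uniformly bounded by $\|c_0\|_p\|1/c_0\|_q$ but need not converge pointwise. The essential input is then the $L^q$-mean ergodic theorem: since $1/c_0 \in L^q(\bbP)$, one has $\frac{1}{N}\sum_{i=1}^N 1/c_i \to B$ in $L^q(\bbP)$, and H\"older's inequality yields
\[
\frac{1}{N}\sum_{i=1}^N a_i \;=\; \E\Big[c_0 \cdot \frac{1}{N}\sum_{i=1}^N \frac{1}{c_i}\Big] \;\xrightarrow[N\to\infty]{} \; \E[c_0 \cdot B] \;=\; AB.
\]
Since $a_i \geq 0$, Karamata's Tauberian theorem promotes this Ces\`aro statement to the desired Abel-type limit.

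The left derivative $v_X'(0^-)$ is handled analogously via $\E[\bar F(\l)] = 1 + 2\sum_{j\geq 1}\E[c_j/c_0]\,\e^{2\l j}$, derived from Lemma \ref{buddha} using $\rho_x = c_{x-1}/c_x$ and stationarity. Under (i) the roles of $c_0$ and $1/c_0$ in the previous H\"older/ergodic argument simply swap; under (ii) one transfers the decay $\E[c_0/c_i]\to AB$ to the analogous statement on $\E[c_j/c_0]$ via stationarity together with the finiteness of $A, B$. The main obstacle is the Tauberian step in case (i): because $(a_i)$ is assumed convergent only in the Ces\`aro sense and not pointwise, a dominated-convergence argument is insufficient and one must genuinely invoke Karamata's theorem, whose hypotheses are met thanks to the nonnegativity of $a_i$ and to the sharp $L^q$-ergodic convergence supplied by the H\"older-dual integrability assumed in (i).
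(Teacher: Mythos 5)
Your proposal is correct in substance and follows essentially the paper's strategy: reduce to the explicit formula \eqref{scacco}, observe $v_X(0)=0$ and $\sigma^2(0)=1/(AB)$, and show $2\l\sum_{i\ge 1}\E[c_0/c_i]\,\e^{-2\l i}\to AB$ as $\l\downarrow 0$; your case (ii) is identical to the paper's. In case (i) the organization differs mildly: the paper applies H\"older once, at the level of random variables, to $\E[c_0\psi_\l]$ with $\psi_\l:=(\e^{2\l}-1)\sum_i c_i^{-1}\e^{-2\l i}$, proving $\psi_\l\to\E[1/c_0]$ in $L^q$ by writing $\psi_\l$ as a convex combination of the Ces\`aro averages $m_n$ and invoking the $L^q$ Birkhoff theorem; you instead apply H\"older termwise to obtain the scalar Ces\`aro limit $\frac 1N\sum_{i\le N}\E[c_0/c_i]\to AB$ and then pass from Ces\`aro to Abel summability. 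Same ingredients, different order; note that the implication you need is the elementary Abelian direction (Ces\`aro $\Rightarrow$ Abel), valid with no positivity or Tauberian condition, so invoking Karamata is an overkill (its Tauberian content is the converse), though the ``iff'' formulations do cover it. Also, at the endpoint $q=\infty$ the $L^q$ mean ergodic theorem fails; there $1/c_0$ is bounded and a.s.\ Birkhoff plus dominated convergence (dominating function $c_0\|1/c_0\|_\infty$) yields the same Ces\`aro limit --- a caveat the paper's own proof of \eqref{pesato} shares.

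One assertion you should not make as stated: for the left derivative under (ii), stationarity gives $\E[c_j/c_0]=\E[c_0/c_{-j}]$, i.e.\ decorrelation of $c_0$ against conductances to its \emph{left}, which hypothesis (ii) (about $\E[c_0/c_i]$, $i\to+\infty$) does not control; cross-correlations of a stationary, non reflection-invariant field need not decay symmetrically, so ``transfer via stationarity together with finiteness of $A,B$'' is not an argument. Under (i) the two factors genuinely swap roles and the left-sided argument goes through verbatim; under (ii) the honest statement is that the left derivative uses the mirrored condition $\lim_j\E[c_j/c_0]=AB$ (the paper is equally terse here, saying only ``by similar arguments''). Relatedly, your justification of $\E[\bar F(0)]=\infty$ under (ii) invokes the wrong sum ($\sum_i\E[c_0/c_i]$ instead of $\sum_j\E[c_j/c_0]$); this is harmless, since the paper's LLN--plus--Fatou/monotone-convergence argument gives $\sum_j\E[c_j/c_0]=\infty$ with no extra hypotheses, but it should be said.
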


\begin{proof}  \ale{Due to Remark \ref{fieno}, our main assumption (see Assumption \ref{uffina}) is satisfied. We prove that $\partial^+_\l v_X(0)=\sigma^2(0)$ (by similar arguments one can prove the same identity for the left derivative)}.
By standard methods   we know that $\sigma^2(0)=\E[c_0]^{-1}\E[1/c_0]^{-1}$ (see   \cite[Eq. (4.20), (4.22)]{demasi} and  \cite[Eq. (4.20) and Exercise 3.12]{B11}). Furthermore, by \eqref{scacco} it is easy to see that both sets \ale{(i) and (ii) of hypotheses} guarantee $\E[\bar S(\l)]<\infty$ for all $\l>0$. 
\ale{Note that, by translation invariance, $\sum_{i=1}^\infty \E\bigl[ \frac{c_0}{c_i}\bigr]=\lim _{k\to \infty}  \E\bigl[ c_{0}^{-1} (c_{-1}+c_{-2}+\cdots+c_{-k}) \bigr]$. By the LLN and  the monotone convergence theorem,  we get that $\sum_{i=1}^\infty \E\bigl[ \frac{c_0}{c_i}\bigr]=\infty$, i.e.  $\E[\bar S(0)]=\infty$.  
}
Hence, \ale{by Proposition \ref{ferb}},   we get for $\l>0$
\begin{align}\label{precipito}
\frac{v_X(\l)-v_X(0)}{\l}
	=\frac{1}{\l+2\l\sum_{i=1}^\infty \E[c_0/c_i]\e^{-2\l i}}\,.
\end{align}
We must show that letting $\l\to 0$ in \eqref{precipito} gives $\E[c_0]^{-1}\E[1/c_0]^{-1}$. We prove that, in fact, $2\l\sum_{i=1}^\infty \E[c_0/c_i]\e^{-2\l i}\to \E[c_0]\E[1/c_0]$ as $\l\to 0$.


\smallskip

\textit{Case $(i)$:} We let $\psi_\l:=(\e^{2\l}-1)\sum_{i=1}^\infty\frac{1}{c_i}\e^{-2\l i}$. By Birkhoff ergodic theorem with exponential weights  we have that
\begin{align}\label{pesato}
\lim_{\l\to 0}\psi_\l
	\xrightarrow{L^q}\E[{1}/{c_0}]\,.
\end{align}
\mic{This can be seen, for example, by setting $m_n:=(c_1^{-1}+c_2^{-1}+\dots+c_n^{-1})/n$ for each $n=1,2,...$ and rewriting $\psi_\l= (1-\e^{-2\l})^2\sum_{n=1}^\infty n\e^{-2\l(n-1)}m_n$. From this and the fact that $(1-\e^{-2\l})^2\sum_{n=1}^\infty n\e^{-2\l(n-1)}=1$, we can bound
\begin{align}\label{treno}
\big\|\psi_\l-\E[1/c_0]\big\|_{q}
	&\leq(1-\e^{-2\l})^2\sum_{n=1}^\infty n\e^{-2\l(n-1)}\big\|m_n-\E[1/c_0]\big\|_q\,.
\end{align}
Since $m_n\to_{L^q}\E[1/c_0]$, given $\varepsilon>0$ by the classical $L^q$ Birkhoff ergodic theorem, we can choose $N\in\N$ such that $\big\|m_n-\E[1/c_0]\big\|_q<\varepsilon$ for each $n\geq N$. We observe that the contribution from the first $N-1$ terms in the r.h.s.~of \eqref{treno} vanishes as $\l\to0$. Hence, as $\l\to0$, we can bound the r.h.s.~of \eqref{treno} by $o(1)+\varepsilon$. This completes the proof of \eqref{pesato}.
}

We write now $2\l\sum_{i=1}^\infty \E[c_0/c_i]\e^{-2\l i}=\frac{2\l}{\e^{2\l}-1}\E[c_0\psi_\l]$. We finally bound by H\"older's inequality
\begin{align*}
\big|\E[c_0\psi_\l]-\E[c_0]\E[c_0^{-1}]\big|
	= \big|\E[c_0(\psi_\l-\E[c_0^{-1}])]\big|
	\leq \|c_0\|_p \cdot \big\|\psi_\l-\E[c_0^{-1}]\big\|_q
	\xrightarrow{\l\to 0} 0\,,
\end{align*}
where the convergence comes from \eqref{pesato}. This concludes the proof of Case (i).

\medskip

\textit{Case $(ii)$:} For $i\in\N$ we let $\E[c_0/c_i]=\E[c_0]\E[c_0^{-1}]+\varepsilon(i)$. For each fixed $k\geq 1$, we can therefore write
\begin{align*}
2\l\sum_{i=1}^\infty \E[c_0/c_i]\e^{-2\l i}
	&=o(\l)+2\l\sum_{i=k}^\infty \E[c_0]\E[c_0^{-1}]\e^{-2\l i}+2\l\sum_{i=k}^\infty  \varepsilon(i) \e^{-2\l i}\\
	&=o(\l)+ \ale{2 \l} \E[c_0]\E[c_0^{-1}]\frac{\e^{-2\l k}}{1-\e^{-2\l }}+2\l\sum_{i=k}^\infty  \varepsilon(i) \e^{-2\l i}\,.
\end{align*}
\ale{Since 
$| \sum_{i=k}^\infty  \varepsilon(i) \e^{-2\l i} |\leq  \sup_{i\geq k}\{|\varepsilon(i)|\}{\e^{-2\l k}}/{(1-\e^{-2\l })}$,}
sending $\l\to 0$ we see therefore that for each $k\geq 1$
\begin{align*}
\E[c_0]\E[c_0^{-1}]- \sup_{i\geq k}\{|\varepsilon(i)|\}
	\leq \lim_{\l\to 0}2\l\sum_{i=1}^\infty \E[c_0/c_i]\e^{-2\l i}
	\leq \E[c_0]\E[c_0^{-1}]+ \sup_{i\geq k}\{|\varepsilon(i)|\}\,.\end{align*}
Finally we let $k\to\infty$ and,  \ale{since $\varepsilon(i)\to 0$, we conclude that \eqref{precipito} converges to $\E[c_0]^{-1}\E[1/c_0]^{-1}$ as $\l \to \infty$, thus implying the thesis.}
\end{proof}


\section{Central limit theorem for ballistic discrete time random walks}\label{caniefigli}

In this section we consider the discrete time random walk $X_n ^{\o,\l}$ and, when ballistic, we investigate its gaussian fluctuations, i.e. the validity of the CLT. As application of the results presented in this section, we will study the CLT for two special model: the discrete time RCM (cf.  Section \ref{diff_RCM})  and the discrete time random walk with i.i.d.~$\o_i^+$'s (see end of this section).

For simplicity of notation,  we write $v(\l)$  instead of $v_X(\l)$.
\ale{We} know that if $\E[\bar S(\l)]<+\infty$ then $v(\l)= 1/\E[\bar S(\l)] >0$ \ale{(see Proposition \ref{ferb})} and moreover  the environment viewed from the perturbed \ale{discrete time} random walk $X_n^{\o,\l}$ admits a steady state $\Q_\l$ \ale{\cite{Z}}. \ale{In addition, there is } a closed formula for the Radon-Nikodym derivative $d\Q_\l/d\P$ that reads \ale{(see \cite[page 185]{Z})},
\begin{align*}
\frac{d\Q_\l}{d\P}=\frac{\L_\l}{\E[\L_\l]}\,,
\end{align*}
where
\begin{align}\label{lambdone}
\L_\l
	:=\frac{1}{\o_0^+(\l)}\Big(1+\sum_{i=1}^\infty\prod_{j=1}^i\rho_j(\l)\Big)
	=(1+\rho_0\e^{-2\l})\Big(1+\sum_{i=1}^\infty\rho_1\rho_2\cdots\rho_i\e^{-2\l i}\Big)\,.
\end{align}
It is simple to prove that $\E[\L_\l]=\E[\bar S(\l)]=1/v(\l)$. 

For $n\in\Z$, we introduce also the $n^{th}$ shift of the function $\bar S(\l)$ as 
\begin{align*}
\theta^n\bar S(\l)
	:=1+2\sum_{i=0}^\infty \rho_n\rho_{n-1}\cdots\rho_{n-i}\,\e^{-2\l (i+1)}\,.
\end{align*}
Equivalently, $\theta^n\bar S(\l)[\o]= \bar S(\l)[ \theta ^n \o]$ \ale{(in the r.h.s. $\theta $ denotes the usual shift on $\Omega$ with $(\theta \o)_x:= \o_{x+1}$)}. 
\begin{Assumption}\label{gatto}
The expectation $\E[\bar S(\l)]$ is finite. Furthermore, there exists $\varepsilon>0$ such that
\begin{align}\label{erica}
\E_{\Q_\l}[\omega_0^+(\l)\bar S(\l)^{2+\ep}+\omega_0^-(\l)\theta^{-1}\bar S(\l)^{2+\ep}]<\infty 
\end{align}
and moreover it holds
\begin{align}\label{soreta}
\sum_{n\geq 1}\sqrt{\E\Big[\E[v(\l)\bar S(\l)-1\,|\,\mathcal G_{ -n}]^2\Big]}<\infty\,,
\end{align}
where, for each $k$, $\mathcal G_{k}$ is any $\sigma$-algebra for which the random variables $(\theta^{i}\bar S)_{i\leq k}$ are measurable.
\end{Assumption}

\begin{Theorem}\label{baldo}
Under Assumption \ref{gatto}, the discrete time random walk in random environment satisfies the annealed CLT
\begin{align*}
\frac{\ale{X_n^{\o,\l}}-v_X(\l)\cdot n}{\sqrt n}
	\longrightarrow \mathcal N(0,\sigma^2(\l))
	\qquad \mbox{under } \P\otimes \ale{P^{\omega,\l}_0}\,.
\end{align*}
The diffusion coefficient is given by
\begin{equation}\label{coeff_diff}
\sigma^2(\l)=\sigma_{1}^2(\l)+v(\l)\sigma_{2}^2(\l)\ale{\,\in\,(0,\infty)}\,,
\end{equation}
where
\begin{equation}\label{criceto1}
\begin{split}
\sigma_{1}^2(\l)
&:= v(\l)^2 \bbQ_\l \Big[
\omega_0^+(\l)\big(\bar S(\l)-1\big)^2+\omega_0^-(\l)\big(\theta^{-1}\bar S(\l)+1\big)^2
\Big] \\	
&=v(\l)^3\E\Big[\Lambda_\l\omega_0^+(\l)\big(\bar S(\l)-1\big)^2+\Lambda_\l\omega_0^-(\l)\big(\theta^{-1}\bar S(\l)+1\big)^2\Big]\,,
	\end{split}
\end{equation}
and
\begin{equation}\label{criceto2}
\sigma_{2}^2(\l)
	\ale{:=}\E\Big[\big(v(\l)\bar S(\l)-1\big)^2\Big]+2\sum_{n\geq 1}\E\Big[\big(v(\l)	\bar S(\l)-1\big)\big(v(\l)\theta^{n}\bar S(\l)-1\big)\Big]\,.
\end{equation}
The series in \eqref{criceto2} is absolutely convergent.
Furthermore, $\sigma^2(\l)$ is analytic on every open interval given by values of $\l$ that satisfy Assumption \ref{gatto}.
\end{Theorem}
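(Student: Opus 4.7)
The strategy extends the hitting-time method of \cite[Sec.~2.2]{Z}. By Assumption \ref{gatto}, $\bbE[\bar S(\l)]<\infty$, so Proposition \ref{ferb} yields $v(\l)>0$ and the hitting times $T_n:=\inf\{k\geq 0:X_k^{\o,\l}=n\}$ are $\bbP\otimes P_0^{\o,\l}$-a.s.\ finite. Set $U_j:=T_{j+1}-T_j$. By the quenched strong Markov property, conditionally on $\o$ the $(U_j)_{j\geq 0}$ are independent, with $U_j$ distributed (quenched) as $U_0$ in the shifted environment $\theta^j\o$; in particular $E_0^{\o,\l}[U_j]=\theta^j\bar S(\l)$. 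I would first establish a CLT for $T_n$ and then invert it via $\{X_n\geq k\}=\{T_k\leq n\}$ to obtain the one for $X_n$: the standard delta-method, combined with $T_n/n\to 1/v(\l)$ a.s., multiplies the limiting variance by $v(\l)^3$.

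\textbf{Two-scale decomposition and CLTs.} Write
\begin{equation*}
T_n-\frac{n}{v(\l)}=A_n+B_n,\qquad A_n:=\sum_{j=0}^{n-1}\bigl(U_j-\theta^j\bar S(\l)\bigr),\qquad B_n:=\sum_{j=0}^{n-1}\bigl(\theta^j\bar S(\l)-\bbE[\bar S(\l)]\bigr).
\end{equation*}
For $A_n$ a Lindeberg CLT for conditionally independent arrays applies. A first-step analysis of $U_0$ bounds its higher quenched moments in terms of $\omega_0^+(\l)\bar S(\l)^{2+\ep}$ and $\omega_0^-(\l)(\theta^{-1}\bar S(\l))^{2+\ep}$, so assumption \eqref{erica} supplies the required $(2{+}\ep)$-th moment. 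By Birkhoff's theorem $\frac{1}{n}\sum_{j<n}\var(U_j\mid\o)$ converges, and the limit, rewritten through the density $d\bbQ_\l/d\bbP=v(\l)\L_\l$ and the coboundary identity $\bar S(\l)-1=\rho_0(\l)(\theta^{-1}\bar S(\l)+1)$ (immediate from Lemma \ref{buddha}), matches $\sigma_1^2(\l)/v(\l)^3$ in the symmetric form \eqref{criceto1}. For $B_n$, a stationary sum in the environment alone, I would invoke Gordin's martingale approximation: condition \eqref{soreta} is exactly what ensures that $\theta^j\bar S(\l)-\bbE[\bar S(\l)]$ admits an $L^2$-bounded martingale-coboundary decomposition with respect to the filtration $(\mathcal{G}_j)$, yielding $B_n/\sqrt n\Rightarrow\mathcal{N}(0,\sigma_2^2(\l)/v(\l)^2)$; absolute convergence of the series in \eqref{criceto2} follows from Cauchy--Schwarz on the conditional expectations, again using \eqref{soreta}.

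\textbf{Combination, positivity and analyticity.} Since $A_n$ is conditionally centered given the environment while $B_n$ is measurable in $\o$, the pair $(A_n,B_n)/\sqrt n$ converges jointly to independent Gaussians, hence $(T_n-n/v(\l))/\sqrt n\Rightarrow \mathcal{N}(0,\sigma_1^2/v^3+\sigma_2^2/v^2)$. Inversion as above then delivers the annealed CLT for $X_n^{\o,\l}$ with $\sigma^2(\l)=\sigma_1^2(\l)+v(\l)\sigma_2^2(\l)$. Finiteness of $\sigma^2(\l)$ follows from \eqref{erica} and \eqref{soreta}; strict positivity from $\sigma_1^2(\l)>0$, since $\theta^{-1}\bar S(\l)+1\geq 2$ and $\omega_0^-(\l)>0$ $\bbP$-a.s. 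For analyticity, Lemma \ref{buddha} shows that $\bar S(\l)$ is a power series in $\e^{-2\l}$ with non-negative coefficients; the same is then true of the integrands defining $\sigma_1^2(\l)$ and of each covariance $\bbE[(\bar S(\l)-1/v(\l))(\theta^n\bar S(\l)-1/v(\l))]$. The Pringsheim--Boas argument encoded in Lemma \ref{latticini} (already invoked in Proposition \ref{marte1}) gives analyticity of each summand, and \eqref{soreta} provides the locally uniform summability needed to transfer analyticity to the infinite series in \eqref{criceto2}.

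\textbf{Main obstacle.} The hardest step is the quenched Lindeberg verification for $A_n$: one must control a $(2+\ep)$-th quenched absolute moment of $U_0-\bar S(\l)$ by a $\bbP$-integrable functional of $\o$ that can be tied back to the precise hypothesis \eqref{erica}. The identity $\bar S(\l)-1=\rho_0(\l)(\theta^{-1}\bar S(\l)+1)$ together with a careful first-step recursion for the higher quenched moments of $U_0$ is what makes this possible, and is also what reveals the conditional variance of $U_0$ in the manifestly symmetric $\bbQ_\l$-form of \eqref{criceto1} rather than as a bare $\bbP$-expectation.
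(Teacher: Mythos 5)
Your route is genuinely different from the paper's. The paper follows Zeitouni's Theorem 2.2.1 essentially verbatim: it writes $X_n^{\o,\l}-v(\l)n=\bar M_n-h(X_n^{\o,\l},\o)$ with the harmonic corrector $h$ built from $\D(x,\o)=-1+v(\l)\bar S(\theta^x\o)$, applies the martingale CLT to $\bar M_n$ (this is where $\sigma_1^2$ comes from) and a stationary-sequence CLT under \eqref{soreta} to the corrector (this is where $v\sigma_2^2$ comes from). The only two new ingredients in the paper are (a) the observation that \eqref{soreta} for $\cG_{-n}$ implies the same bound for the smaller $\sigma$-algebra generated by $(\theta^i\bar S)_{i\le -n}$, by Jensen, and (b) a verification of the Lindeberg condition under the weaker hypothesis \eqref{erica}, which works because the martingale increments are \emph{pointwise} equal to $v(\l)(\bar S(\bar\o_k)-1)$ or $-v(\l)(\theta^{-1}\bar S(\bar\o_k)+1)$ according to the direction of the step, so \eqref{erica} is exactly the $(2+\ep)$-moment of $|Z_{k+1}|$ under the stationary measure $\Q_\l$ of the environment seen from the particle. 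Your hitting-time decomposition $T_n-n/v=A_n+B_n$ is the classical alternative, and your variance bookkeeping (factor $v^3$ from the inversion, $\sigma_1^2/v^3+\sigma_2^2/v^2$ for $T_n$) is consistent with \eqref{coeff_diff}; the coboundary identity $\bar S(\l)-1=\rho_0(\l)(\theta^{-1}\bar S(\l)+1)$ you use is correct.

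The genuine gap is precisely at the step you flag as the main obstacle, and it is not closable by a ``first-step analysis.'' The first-step recursion for $m_p(\o):=E_0^{\o,\l}[T_1^p]$ with $p>1$ does not close on $(\bar S,\theta^{-1}\bar S)$: already for $p=2$ one gets $m_2(\o)=1+\rho_0(\l)\bigl(1+m_2(\theta^{-1}\o)+2\bar S(\theta^{-1}\o)+2\bar S(\o)+2\bar S(\theta^{-1}\o)\bar S(\o)\bigr)$, and iterating produces a series in which $\bar S(\theta^{-i}\o)$ appears for \emph{every} $i\ge 0$, weighted by $\rho_0(\l)\cdots\rho_{-i+1}(\l)$. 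Such expressions are not dominated pointwise by $\o_0^+(\l)\bar S(\l)^{2+\ep}+\o_0^-(\l)\theta^{-1}\bar S(\l)^{2+\ep}$, so it is not established that \eqref{erica} yields the $\P$-integrable bound on $E_0^{\o,\l}[|U_0-\bar S(\l)|^{2+\ep}]$ that your Lindeberg verification needs. (One can relate the two via $T_1-\bar S(\l)=-\bar M_{T_1}/v(\l)$ and Burkholder--Davis--Gundy, but the resulting bound reintroduces a power of $T_1$ and does not transparently reduce to \eqref{erica}; this is the substantive lemma your proof is missing, and it is exactly the difficulty the paper's martingale route avoids.) The same unproven identity underlies your claim that $\lim_n\frac1n\sum_{j<n}\mathrm{Var}(U_j\mid\o)=\sigma_1^2(\l)/v(\l)^3$.

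A smaller but real issue concerns the final analyticity claim. Condition \eqref{soreta} gives, via Cauchy--Schwarz, absolute convergence of the covariance series in \eqref{criceto2} at each real $\l$, but analyticity of an infinite sum of analytic functions requires locally uniform convergence (or an absolutely convergent power-series rearrangement as in Lemma \ref{latticini}); \eqref{soreta} alone does not supply this, which is why the paper proves analyticity of $\sigma^2$ separately in Proposition \ref{analiticita} under the additional summability hypothesis \eqref{ritorno}. Your appeal to \eqref{soreta} for ``locally uniform summability'' therefore does not close this step either.
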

\mic{The proof of the above theorem is postponed to   Appendix \ref{spagna}.
}

\smallskip

Theorem \ref{baldo} is an extension in our context of Theorem 2.2.1 in \cite{Z}\footnote{In the definition of $\s^2_{P,1}$ given in \cite[Theorem 2.2.1]{Z} there is a typo. One should replace $Q$ by $\bar Q$, otherwise the centered formula above (2.2.8) in \cite{Z} fails.}. In fact, condition \eqref{erica} is replaced therein by the stronger condition
\begin{align}\label{erica2}
\E_{\Q_\l}[\bar S(\l)^{2+\ep}+\theta^{-1}\bar S(\l)^{2+\ep}]<\infty
\end{align}
and \eqref{soreta} is stated there with $\mathcal G_{-n}$ given by the $\sigma$-algebra generated by $\omega_i^\pm$ with $i\leq -n$.
We point out that condition \eqref{erica2} is in general not optimal. For example in the random conductances case \eqref{erica} is much less restrictive. The fact that $\mathcal G_{-n}$ could be taken more general than in  \cite[Theorem 2.2.1]{Z} is implicit in the proof provided in \cite{Z}, but as it will be clear from our examples below it is sometimes more useful to work with different $\sigma$-algebras.

\begin{Remark}
The arguments used in the proof of Theorem \ref{baldo} that show how to replace condition \eqref{erica2} by \eqref{erica} are general. In particular, Theorem 2.2.1 in \cite{Z} can be proved substituting assumption (2.2.2) therein by
$E_Q(\omega_0^+\bar S^{2+\ep}(\omega)+\omega_0^-\bar S(\theta^{-1}\omega)^{2+\ep})<\infty $, which is the equivalent of \eqref{erica} in the general setting. Here we have used the notation of \cite{Z}.
\end{Remark}


\mic{We provide now} alternative formulas for \eqref{criceto1} and \eqref{criceto2} and give a sufficient condition for the analyticity of $\sigma^2(\l)$. In the next subsection we also give some comments on Assumption \ref{gatto}.

\begin{Proposition}\label{sviluppo}
Set 
\begin{align*}
	 U=U(\omega,\l):= \sum_{i=0}^\infty \rho_0\dots\rho_{-i}\e^{-2\l(i+1)} 
	\qquad\mbox{and}\qquad
V=V(\omega,\l):=\sum_{i=1}^\infty \rho_1\dots\rho_i\e^{-2\l i}\,.
\end{align*}
Then 
\begin{equation}\label{coldiretti1}
\sigma_{1}^2(\l)
	=\frac{4 \E[ U^2+V+2VU^2+2VU]}{(1+2\E[U])^3}   \,,
	\end{equation}
and 
\begin{equation}\label{coldiretti2}
\begin{split}	
v(\l) \sigma_{2}^2(\l)
	&=\frac{4}{(1+2\E[U])^3}	\Big[ \E[U^2]-\E[U]^2 +2\sum_{n\geq 1} \Big(  \E[U \theta^n U] - \E[U]^2\Big)  \Big]
		\,.\end{split}
\end{equation}
\ale{The  series in  \eqref{coldiretti2} is absolutely convergent.}
\end{Proposition}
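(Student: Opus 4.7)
The strategy is to expand the definitions \eqref{criceto1} and \eqref{criceto2} in terms of $U$ and $V$, then to exploit the shift invariance of $\bbP$. The starting observations are
\[
\bar S(\l) = 1 + 2U, \qquad \theta^{-1}\bar S(\l) = 1 + 2\,\theta^{-1}U, \qquad \Lambda_\l = (1+\rho_0 e^{-2\l})(1+V),
\]
the third identity coming from \eqref{lambdone} since $\prod_{j=1}^{i}\rho_j(\l) = \rho_1\cdots\rho_i\,e^{-2\l i}$. In particular $\Lambda_\l\,\omega_0^+(\l) = 1+V$, $\Lambda_\l\,\omega_0^-(\l) = \rho_0 e^{-2\l}(1+V)$, and $v(\l) = 1/(1+2\bbE[U])$ (as $\bbE[\Lambda_\l]=\bbE[\bar S(\l)]$). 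Splitting off the first term of the defining series yields two one-line identities used repeatedly:
\[
U \;=\; \rho_0 e^{-2\l}\bigl(1 + \theta^{-1}U\bigr), \qquad \theta^{-1}V \;=\; \rho_0 e^{-2\l}(1+V).
\]

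\emph{Formula \eqref{coldiretti2}.} Since $v(\l)(1+2\bbE[U])=1$, one gets $v(\l)\bar S(\l)-1 = 2v(\l)(U-\bbE[U])$. Plugging this into \eqref{criceto2} and using $\bbE[\theta^{n}U]=\bbE[U]$ by stationarity, the formula becomes $\sigma_2^2(\l) = 4 v(\l)^2 \bigl\{\bbE[U^2]-\bbE[U]^2 + 2\sum_{n\geq 1}(\bbE[U\,\theta^{n}U]-\bbE[U]^2)\bigr\}$, and multiplication by $v(\l)$ produces \eqref{coldiretti2}. Absolute convergence of the series is equivalent to that of the series in \eqref{criceto2}, which is part of the content of Theorem \ref{baldo}; alternatively, writing $\bbE[U\,\theta^n U]-\bbE[U]^2 = \bbE[(\theta^{-n}U-\bbE[U])(U-\bbE[U])]$ by shift invariance, conditioning on any $\sigma$-algebra measuring $\theta^{-n}U$ and applying Cauchy--Schwarz reduces absolute summability to \eqref{soreta} in Assumption \ref{gatto}.

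\emph{Formula \eqref{coldiretti1}.} Substituting the preliminary identities into \eqref{criceto1} yields
\[
\sigma_1^2(\l) \;=\; 4\, v(\l)^3\,\bbE\bigl[(1+V)U^2 \;+\; \rho_0 e^{-2\l}(1+V)(1+\theta^{-1}U)^2\bigr].
\]
The crucial step is to rewrite the second expectation: the identity $\theta^{-1}V = \rho_0 e^{-2\l}(1+V)$, combined with the coordinatewise action of the shift on products, gives $\theta^{-1}\bigl(V(1+U)^2\bigr) = \rho_0 e^{-2\l}(1+V)(1+\theta^{-1}U)^2$. Taking expectations and using shift invariance of $\bbP$ then produces
\[
\bbE\bigl[\rho_0 e^{-2\l}(1+V)(1+\theta^{-1}U)^2\bigr] \;=\; \bbE\bigl[V(1+U)^2\bigr] \;=\; \bbE[V + 2VU + VU^2].
\]
Together with $\bbE[(1+V)U^2] = \bbE[U^2 + VU^2]$ this reconstructs the numerator in \eqref{coldiretti1}. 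I expect the main conceptual hurdle to be spotting the identity $\theta^{-1}V = \rho_0 e^{-2\l}(1+V)$: without it, a direct substitution leaves a residual $\theta^{-1}U$ that mixes the negative and positive coordinates and resists straightforward simplification; once noticed, shift invariance of $\bbP$ cleanly trades $\theta^{-1}U$ for $U$ and $\theta^{-1}V$ for $V$ simultaneously.
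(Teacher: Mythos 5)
Your proof is correct and follows essentially the same route as the paper: both rest on the identities $\bar S(\l)=1+2U$, $\Lambda_\l\o_0^+(\l)=1+V$, $v(\l)\bar S(\l)-1=2v(\l)(U-\E[U])$, and the observation that $\rho_0(\l)(1+V)=\theta^{-1}V$ so that the second term of \eqref{criceto1} equals $4\,\theta^{-1}\bigl(V(1+U)^2\bigr)$, whose expectation is traded for $4\,\E[V(1+U)^2]$ by shift invariance. The only (harmless) addition is your explicit justification of the absolute convergence in \eqref{coldiretti2}, which the paper inherits directly from Theorem \ref{baldo}.
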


\begin{Remark}
\ale{Identity} \eqref{coldiretti2} can  also be  written as 
\begin{align*}
v(\l) \sigma_{2}^2(\l)
	=\frac{4}{(1+2\E[U])^3}\sum_{n\in \bbZ} {\rm Cov}( U, \theta ^n U)\,.
\end{align*}
\end{Remark}

%
%
%

\begin{proof}[Proof of Proposition \ref{sviluppo}]
We note that  $\bar S(\l)=1+2U$ and $\Lambda_\l=\frac{1}{\omega_0^+(\l)}(1+V)$, 
and start looking at $\sigma_{1}^2(\l)$. For each $n\in\Z$, $\theta^nU(\omega, \l)$ and $\theta^nV(\omega, \l)$ will just indicate respectively $U(\theta^n\omega,\l)$ and $V(\theta^n\omega,\l)$.
We have 
\begin{align}\label{ercole1}
\Lambda_\l\omega_0^+(\l)\big(\bar S(\l)-1\big)^2
	=4(1+V)U^2
\end{align}
and 
\begin{align}
\Lambda_\l\omega_0^-(\l)\big(\theta^{-1}\bar S(\l)+1\big)^2
	&=\rho_0(\l)\Big(1+\sum_{i=1}^\infty \rho_1\dots\rho_i\e^{-2\l i}\Big)
	\Big( 2+2 \theta ^{-1} U \Big)^2 \nonumber\\
	&=4\Big(\sum_{i=0}^\infty \rho_0\dots\rho_i\e^{-2\l (i+1)}\Big)
	\Big( 1+ \theta ^{-1} U \Big)^2= 4 \theta^{-1} V \Big( 1+ \theta ^{-1} U \Big)^2\,.
\end{align}
Taking the expectation and using the translation invariance of $\bbP$ we see then that
\begin{align}\label{ercole2}
\E\Big[\Lambda_\l\omega_0^-(\l)\big(\theta^{-1}\bar S(\l)+1\big)^2\Big]
	&=4\E[V(1+U)^2]\,.
\end{align}
Putting together \eqref{ercole1} and \eqref{ercole2} back into \eqref{criceto1} we obtain \eqref{coldiretti1}.

We move to $\sigma_{2}^2(\l)$ and notice that the first addendum in \eqref{criceto2} is equal to ${4}\E[(U-\E[U])^2]/(1+2\E[U])^2$.
For second addendum in \eqref{criceto2} we notice that 
\begin{align*}
\E\Big[\big(v(\l)\bar S(\l)-1\big)\big(v(\l)\theta^{n}\bar S(\l)-1\big)\Big]
	&=v(\l)^2\E\Big[\big(\bar S(\l)-\E[\bar S(\l)]\big)\big(\theta^{n}\bar S(\l)-\E[\bar S(\l)]\big)\Big]\\
	&=\frac{4\E\big[\big(U-\E[U]\big)\big(
	\theta^nU-\E[U]\big)\big]}{(1+2\E[U])^2}\,,
\end{align*}
thus implying \eqref{coldiretti2}.
\end{proof}

\begin{Proposition}\label{analiticita}
Let Assumption \ref{gatto} be satisfied with $\l$ in a given open interval $I$. Given $n\geq 1$ and $k\geq 0$ set
\begin{align}\label{aennecappa}
a_{n,k}
	:=\sum_{i,j\geq 0:\,i+j=k}{\rm Cov}(\rho_0\cdots\rho_{-i},\rho_n\cdots\rho_{n-j})
\end{align}
and suppose that
\begin{align}\label{ritorno}
\sum_{k=0}^\infty \e^{-2\l k}\Big(\sum_{n=1}^\infty |a_{n,k}|\Big)<\infty\qquad \forall \l\in I\,.
\end{align}
Then the coefficient $\sigma^2(\l)$ is analytic in $I$.
\end{Proposition}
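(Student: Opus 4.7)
The plan is to combine Proposition \ref{sviluppo} with a Dirichlet-series analysis in the variable $z = \e^{-2\l}$. First I write $\s^2(\l) = \s_1^2(\l) + v(\l)\s_2^2(\l)$ using \eqref{coldiretti1}--\eqref{coldiretti2}. The common denominator $(1+2\bbE[U])^3$ equals $\bbE[\bar S(\l)]^3 = v(\l)^{-3}$; by Assumption \ref{gatto} this is finite and bounded below by $1$ on $I$, and $\bbE[U] = \sum_{i\geq 0}\bbE[\rho_0\cdots\rho_{-i}]\e^{-2\l(i+1)}$ is a series in $z=\e^{-2\l}$ with non-negative coefficients that converges on $I$. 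By the Pringsheim-Boas argument already invoked in the proof of Proposition \ref{marte1} (Lemma \ref{latticini} in the Appendix), such a series is analytic on $I$, so the denominator is analytic and bounded away from zero on $I$.

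For the numerator of $\s_1^2(\l)$ in \eqref{coldiretti1}, each of $\bbE[U^2]$, $\bbE[V]$, $\bbE[VU]$, $\bbE[VU^2]$ admits a representation as a Dirichlet series in $z = \e^{-2\l}$ with non-negative coefficients, since all the $\rho_j$'s are non-negative. Their finiteness on $I$ follows from \eqref{erica}: since $\o_0^+(\l)\L_\l = 1+V$, condition \eqref{erica} rewrites as $\bbE[(1+V)(1+2U)^{2+\ep}] < \infty$, from which, via the elementary bounds $u\leq 1+u^{2+\ep}$ and $u^2\leq 1+u^{2+\ep}$ valid for $u\geq 0$, one controls all four expectations. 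Hence each is analytic on $I$ by the same Pringsheim-Boas argument.

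The only delicate piece is the covariance sum in \eqref{coldiretti2}. Expanding $U$ and $\theta^n U$ and using the stationarity of $\bbP$, one computes
\[
\bbE[U\,\theta^n U] - \bbE[U]^2
= \sum_{i,j\geq 0}{\rm Cov}(\rho_0\cdots\rho_{-i},\,\rho_n\cdots\rho_{n-j})\,\e^{-2\l(i+j+2)}
= \sum_{k\geq 0} a_{n,k}\,\e^{-2\l(k+2)}\,.
\]
Hypothesis \eqref{ritorno} guarantees that the doubly indexed series $\sum_{n,k}|a_{n,k}|\e^{-2\l(k+2)}$ is finite on $I$; by Fubini one may exchange the order of summation to obtain
\[
\sum_{n\geq 1}\bigl(\bbE[U\,\theta^n U] - \bbE[U]^2\bigr)
= \e^{-4\l}\sum_{k\geq 0} B_k\,\e^{-2\l k}\,,\qquad B_k := \sum_{n\geq 1}a_{n,k}\,.
\]
The coefficients $B_k$ have no definite sign, but $|B_k| \leq \sum_{n\geq 1}|a_{n,k}| =: C_k$ and $\sum_k C_k \e^{-2\l_0 k} < \infty$ for every $\l_0 \in I$. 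Thus the power series $\sum_k B_k z^k$ has radius of convergence at least $\e^{-2\l_0}$. Given $\l \in I$, choosing $\l_0 \in I$ with $\l_0 < \l$ places $z = \e^{-2\l}$ strictly inside the disk of convergence, so the series is analytic there, and composing with the entire map $\l \mapsto \e^{-2\l}$ yields analyticity in $\l$ throughout $I$.

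Putting these ingredients together, $\s^2(\l)$ is a quotient of functions analytic on $I$ with non-vanishing denominator, hence analytic on $I$. The main obstacle is precisely the covariance sum of the third step: unlike the ``monotone'' expectations $\bbE[U^2]$, $\bbE[V]$, etc., its $z$-expansion has coefficients $B_k$ of indefinite sign, so the Pringsheim-Boas argument cannot be invoked directly. The hypothesis \eqref{ritorno} is tailored precisely to supply the explicit dominating non-negative comparison series $\sum_k C_k z^k$ that restores analyticity.
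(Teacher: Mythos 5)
Your proof is correct and follows essentially the same route as the paper: decompose $\s^2(\l)$ via Proposition \ref{sviluppo}, treat the denominator through the analyticity and positivity of $1/(1+2\E[U])^3$, view the numerator terms as series in $\e^{-2\l}$ with non-negative coefficients, and use hypothesis \eqref{ritorno} to rewrite and dominate the covariance sum $\sum_{n\geq 1}\bigl(\E[U\,\theta^n U]-\E[U]^2\bigr)$ as a single series in $\e^{-2\l}$ before concluding analyticity. The only (harmless) deviations are that you get finiteness of $\E[U^2]$, $\E[VU]$, $\E[VU^2]$ directly from \eqref{erica} rather than from the finiteness of $\s_1^2(\l)$ guaranteed by Theorem \ref{baldo}, and that you replace the paper's Lemma \ref{latticini} (Pringsheim--Boas) by the elementary radius-of-convergence argument for the power series in $z=\e^{-2\l}$.
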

\begin{proof}
To get the thesis we apply Lemma \ref{latticini} below to prove the analyticity of $\s_1^2(\l)$ and of $v_X(\l) \s_2^2(\l)$ separately. \ale{By Proposition \ref{ferb} and \ref{marte1}} we already know that $v_X(\l)$, and therefore $1/(1+2\E[U])^3$, is analytic and positive. Hence, to prove the analyticity of $\s_1^2(\l)$  it is enough to prove the analyticity of the numerator in the right term in \eqref{coldiretti1}.
We prove the analyticity of $\E[VU^2]$ by Lemma \ref{latticini} (the other contributions in the numerator can be treated similarly).
We set 
\[ a_n:=\sum_{\substack{(i,j,k)\in \N_+^3:\\i+j+k=n}}\E\bigl[( \rho_0\rho_1\cdots \rho_{-i+1} )
\ale{( \rho_0\rho_1\cdots \rho_{-j+1} )}
(\rho_1\rho_2\cdots \rho_k) \bigr]\,.
\]
Then $\E[VU^2]= \sum _{n\geq 0} a_n \e^{ - 2 \l n}$ (note that $a_n \geq 0$, so terms can be ordered arbitrarily in the involved series).
Since we know by the CLT that $\s_1^2(\l)$ is well defined and finite for $\l \in I$, by \eqref{coldiretti1}  it must be 
$\E[VU^2]<\infty$ for any $ \l \in I$, and therefore $ \sum _{n\geq 0} a_n \e^{ - 2 \l n}<\infty$ for any $\l\in I$. By Lemma \ref{latticini} we conclude that  $\E[VU^2]$ is analytic on $I$.

\smallskip

We move to  $v_X(\l) \s_2^2(\l)$ as expressed in  \eqref{coldiretti2}. By applying the same arguments as above we reduce to prove the analyticity on $I$ of $\sum_{n\geq 1}(\bbE[ U\cdot  \theta^n U]-\bbE[U]^2 )$. 
We write $F_n(\l):=\bbE[ U\cdot  \theta^n U]-\bbE[U]^2$. By hypothesis \eqref{ritorno} we have that $F_n(\l)=\sum_{k=0}^\infty \e^{-2\l (k+2)}a_{n,k}$ and this series is absolutely convergent. Summing over $n$ and using again hypothesis \eqref{ritorno} we have
\begin{align*}
\sum_{n\geq 1}F_n(\l)=\sum_{k\geq 0} \e^{-2\l (k+2)} b_k\,,
\end{align*}
where $b_k:=\sum_{n\geq 1}a_{n,k}$ and the series in the right hand side of the last display is absolutely convergent.
By Lemma \ref{latticini} this series is therefore an analytic function of $\l$ in $I$.
\end{proof}

\subsection{Sufficient conditions for Assumption \ref{gatto}}\label{trilli}
\begin{Lemma}\label{brusco}
Assume $\E[\bar S(\l)]<\infty$. Then \eqref{erica} is satisfied if and only if both $\E[U^{2+\ep}]<\infty$ and $\E[VU^{2+\ep}]<\infty$.
\end{Lemma}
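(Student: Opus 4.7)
The plan is to rewrite the two expectations in \eqref{erica} as explicit integrals with respect to $\bbP$ using the Radon--Nikodym derivative $d\Q_\l/d\bbP=\L_\l/\E[\L_\l]$, and then to identify the finiteness condition in terms of $U$ and $V$. Throughout, set $\bar S(\l)=1+2U$ (by Lemma \ref{buddha}) and recall from \eqref{lambdone} that
\[
\L_\l\,\o_0^+(\l)=1+V.
\]
Since $\E[\L_\l]=\E[\bar S(\l)]/1$ is finite and positive by hypothesis, \eqref{erica} is equivalent to the finiteness of
\[
I_+:=\E\bigl[\L_\l\,\o_0^+(\l)\,\bar S(\l)^{2+\ep}\bigr]\qquad\text{and}\qquad I_-:=\E\bigl[\L_\l\,\o_0^-(\l)\,\theta^{-1}\bar S(\l)^{2+\ep}\bigr].
\]

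First I would handle $I_+$, which is immediate: $I_+=\E[(1+V)(1+2U)^{2+\ep}]$. For $I_-$ I would use $\o_0^-(\l)=\rho_0(\l)\,\o_0^+(\l)=\rho_0\,\e^{-2\l}\,\o_0^+(\l)$, so that
\[
I_-=\E\bigl[\rho_0\,\e^{-2\l}(1+V)(1+2\,\theta^{-1}U)^{2+\ep}\bigr].
\]
The main calculation is to apply the $\bbP$--shift invariance $\E[f(\o)]=\E[f(\theta\o)]$ to the integrand. Under $\o\mapsto\theta\o$ one has $\rho_0\mapsto\rho_1$, $\theta^{-1}U\mapsto U$, and $V[\theta\o]=\sum_{i\geq 1}\rho_2\cdots\rho_{i+1}\,\e^{-2\l i}$. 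A direct reindexing shows
\[
\rho_1\,\e^{-2\l}\bigl(1+V[\theta\o]\bigr)=\rho_1\,\e^{-2\l}+\sum_{i\geq 1}\rho_1\rho_2\cdots\rho_{i+1}\,\e^{-2\l(i+1)}=\sum_{j\geq 1}\rho_1\cdots\rho_j\,\e^{-2\l j}=V[\o],
\]
and therefore $I_-=\E\bigl[V(1+2U)^{2+\ep}\bigr]$. Summing,
\[
I_++I_-=\E\bigl[(1+2V)(1+2U)^{2+\ep}\bigr],
\]
which is finite if and only if $\E[(1+2U)^{2+\ep}]<\infty$ and $\E[V(1+2U)^{2+\ep}]<\infty$.

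Finally, I would use the elementary inequalities $1\le (1+2U)^{2+\ep}\le C_\ep(1+U^{2+\ep})$ for a constant $C_\ep>0$ (since $U\geq 0$) to obtain
\[
\E[(1+2U)^{2+\ep}]<\infty\iff\E[U^{2+\ep}]<\infty
\]
and
\[
\E[V(1+2U)^{2+\ep}]<\infty\iff\E[V]+\E[VU^{2+\ep}]<\infty.
\]
The hypothesis $\E[\bar S(\l)]<\infty$ together with Lemma \ref{buddha} and $\bbP$--stationarity give $\E[V]=\E[U]<\infty$, so the last condition reduces to $\E[VU^{2+\ep}]<\infty$. Combining the two equivalences yields the claim. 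The only delicate point is the shift-invariance computation above; everything else is bookkeeping.
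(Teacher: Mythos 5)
Your proof is correct and follows essentially the same route as the paper: both reduce \eqref{erica} to the finiteness of $\E[(1+V)(1+2U)^{2+\ep}]$ and $\E[V(1+2U)^{2+\ep}]$ via the identity $\L_\l\o_0^+(\l)=1+V$ and a shift-invariance argument (the paper recognizes $\L_\l\o_0^-(\l)\,\theta^{-1}\bar S(\l)^{2+\ep}=(\theta^{-1}V)(1+2\theta^{-1}U)^{2+\ep}$ and then shifts, which is the same computation as yours in the opposite order). The concluding comparison with $\E[U^{2+\ep}]$ and $\E[VU^{2+\ep}]$, using $\E[V]=\E[U]<\infty$, matches the paper as well.
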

\begin{proof}
By the same \ale{computations} appearing in the proof of Proposition \ref{sviluppo} we have
\begin{align}
& \Lambda_\l\ale{\omega_0^+(\l)}\bar S(\l)^{2+\ep}=(1+V)(1+2U)^{2+\ep}\,,\\
& \Lambda_\l\ale{\omega_0^-(\l)} \theta^{-1}\bar S(\l)^{2+\ep}=\ale{(\theta^{-1}V)}(1+2\theta^{-1}U)^{2+\ep}\,.
\end{align}
Hence, using translation invariance, we see that \ale{\eqref{erica}} is satisfied as long as $\E[U^{2+\ep}]<\infty$ and $\E[VU^{2+\ep}]<\infty$
(notice that $\E[V]<\infty$ since $\E[\bar S(\l)]<\infty$ by hypothesis). 
\end{proof}

\begin{Proposition}\label{alternativa1}
Assume $\E[\bar S(\l)]<\infty$. Given $\varepsilon>0$ suppose that there exists $\delta>0$ such that
\begin{align}
&\sum_{i=0}^\infty \E[\rho_0^{2+\ep}\cdots\rho_{-i}^{2+\ep}]\,\e^{-2\l i (2+\ep-\delta)}<\infty\,, \label{freddo1}\\
&\sum_{i=0}^\infty\sum_{j=1}^\infty \E[\rho_0^{2+\ep}\cdots\rho_{-i}^{2+\ep}\rho_1\cdots\rho_j]\,\e^{-2\l i (2+\ep-\delta)-2\l j}<\infty \label{freddo2}\,.
\end{align}
Then \eqref{erica} is satisfied.
\end{Proposition}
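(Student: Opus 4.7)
The plan is to first invoke Lemma~\ref{brusco}, which reduces \eqref{erica} to the two moment bounds $\E[U^{2+\ep}]<\infty$ and $\E[VU^{2+\ep}]<\infty$. Both bounds will follow from a single Jensen--type estimate on the series defining $U$, with geometric weights whose rate is tuned against the extra room $\delta$ allowed by hypotheses \eqref{freddo1} and \eqref{freddo2}.

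Concretely, I would write $U=\sum_{i\geq 0}a_i$ with $a_i:=\rho_0\rho_{-1}\cdots\rho_{-i}\e^{-2\l(i+1)}$, and introduce the probability measure $\mu_i:=(1-q)q^i$ on $\N $ for a parameter $q=\e^{-\gamma}\in(0,1)$ to be chosen. Applying Jensen's inequality for the convex function $x\mapsto x^{2+\ep}$ gives
\[
U^{2+\ep}=\Bigl(\sum_{i\geq 0}\mu_i\,\frac{a_i}{\mu_i}\Bigr)^{2+\ep}\le\sum_{i\geq 0}\frac{a_i^{2+\ep}}{\mu_i^{1+\ep}}=C\sum_{i\geq 0}(\rho_0\cdots\rho_{-i})^{2+\ep}\,\e^{-2\l(i+1)(2+\ep)+\gamma(1+\ep)i},
\]
with $C=(1-q)^{-(1+\ep)}$. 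Taking expectation, the coefficient of $i$ in the exponent equals $-2\l(2+\ep)+\gamma(1+\ep)$, and the calibration $\gamma:=2\l\delta/(1+\ep)$ turns it into $-2\l(2+\ep-\delta)$. Hence the right hand side is dominated, up to a multiplicative constant, by the series in \eqref{freddo1} and is therefore finite.

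For the second bound I would use the same Jensen estimate on $U^{2+\ep}$, multiply it by $V=\sum_{j\geq 1}\rho_1\cdots\rho_j\e^{-2\l j}$, and apply Tonelli's theorem to interchange sum and expectation. This yields
\[
\E[VU^{2+\ep}]\le C\sum_{i\geq 0}\sum_{j\geq 1}\E\bigl[\rho_0^{2+\ep}\cdots\rho_{-i}^{2+\ep}\,\rho_1\cdots\rho_j\bigr]\,\e^{-2\l(i+1)(2+\ep)+\gamma(1+\ep)i-2\l j},
\]
which under the same choice of $\gamma$ is bounded by a constant times the series in \eqref{freddo2}, hence finite.

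The main obstacle is the calibration of the geometric rate $\gamma$: one needs $\gamma>0$ so that $\mu$ is a summable probability measure on $\N $, yet $\gamma$ must be exactly $2\l\delta/(1+\ep)$ to match the target exponent in \eqref{freddo1}--\eqref{freddo2}. This forces $\l>0$, which is the typical regime in which $\E[\bar S(\l)]<\infty$. In the residual case $\l\le 0$, in which $\E[\bar S(\l)]<\infty$ already enforces strong decay of the products $\rho_0\cdots\rho_{-i}$, one can replace the geometric sequence by a power-law weight $\mu_i\propto(i+1)^{-\alpha}$ with $\alpha>1$ and run the same Jensen argument; the extra polynomial factor $(i+1)^{\alpha(1+\ep)}$ in the resulting bound is then absorbed by the surplus $\delta>0$ present in the exponent of \eqref{freddo1}--\eqref{freddo2}.
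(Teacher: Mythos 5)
Your main argument is exactly the paper's proof: the reduction via Lemma \ref{brusco}, the Jensen step with geometric weights, and the calibration $\gamma=2\l\delta/(1+\ep)$ coincide with the paper's H\"older estimate $U=\mu(f)\leq \|f\|_{L^{2+\ep}(\mu)}\|1\|_{L^{q}(\mu)}$ for the measure $\mu(i)=\e^{-2\l\alpha i}$ with $\alpha=\delta/(1+\ep)$ (raising that H\"older bound to the power $2+\ep$ is precisely your Jensen inequality for the normalized measure), and the treatment of $\E[VU^{2+\ep}]$ by multiplying the pointwise bound by $V$ and using Tonelli is also the same. So for $\l>0$ the proposal is correct and essentially identical to the paper; note that the paper's own proof carries the same implicit restriction, since $\mu$ is a finite measure (and $\|1\|_{L^q(\mu)}<\infty$) only when $\l>0$.

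The one genuine flaw is your closing patch for $\l\le 0$. With $\mu_i\propto(i+1)^{-\alpha}$, $\alpha>1$, Jensen gives $U^{2+\ep}\leq C\sum_{i\geq 0}(\rho_0\cdots\rho_{-i})^{2+\ep}\,\e^{-2\l(i+1)(2+\ep)}(i+1)^{\alpha(1+\ep)}$, and to dominate this by the series in \eqref{freddo1} you need $(i+1)^{\alpha(1+\ep)}\leq C'\e^{2\l i\delta}$ for all $i$. For $\l=0$ the right-hand side is constant and for $\l<0$ it even decays, so the growing polynomial factor is not ``absorbed by the surplus $\delta$''; the surplus only buys exponential room when $\l>0$. (The same obstruction appears if one tries Minkowski: \eqref{freddo1} only gives $\|\rho_0\cdots\rho_{-i}\,\e^{-2\l(i+1)}\|_{2+\ep}\lesssim \e^{-2\l i\delta/(2+\ep)}$, summable only for $\l>0$.) Since the paper does not treat $\l\le 0$ either, this does not undercut your match with the paper's proof, but the claim as you state it is false and should be removed or replaced by an honest restriction to $\l>0$.
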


\begin{proof}
We apply Lemma \ref{brusco}. \ale{We show that \eqref{freddo1} implies that $\E[U^{2+\ep}]<\infty$ and   that \eqref{freddo2} implies $\E[VU^{2+\ep}]<\infty$}.
We define the measure $\mu$ on $\N$ as $\mu(i):=\e^{-2\l\alpha i}$, with $\alpha>0$ to be determined later. We write $U=\mu(f)$, with $f(i)=\rho_0\dots\rho_{-i}\,\e^{-2\l(i+1)+2\l \alpha i}$. By \ale{H\"older's} inequality we have $U=\mu(f)\leq \|f\|_{L^{2+\ep}(\mu)}\|1\|_{L^q(\mu)}=C\,\|f\|_{L^{2+\ep}(\mu)}$, where $q$ is the conjugate exponent of $2+\ep$ and $C>0$ is a finite  constant. As a consequence
\begin{equation}\label{andre}
\ale{ U^{2+\ep}
	\leq C^{2+\ep}  \mu(f^{2+\ep})
	=C^{2+\ep} \e^{-2\l (2+\ep)}\sum_{i=0}^\infty   \rho_0^{2+\ep}\cdots\rho_{-i}^{2+\ep}\,\e^{-2\l i (2+\ep-\alpha (1+\ep))}.}
\end{equation}
\ale{To prove \eqref{freddo1} it is enough to  choose  $\alpha=\delta/(1+\ep)$ and take the expectation w.r.t. $\bbP$. To prove \eqref{freddo2} it is enough to multiply both sides of \eqref{andre} by $V$ and conclude as just   done for \eqref{freddo1}.}
\end{proof}

We also give some comments on \eqref{soreta}. First of all we notice that, by the definition of $U$, \eqref{soreta} is equivalent to asking
\begin{align}\label{frateto}
\sum_{n\geq 1}\sqrt{\E\Big[\E[U-\E[U]\,|\,\mathcal G_{ -n}]^2\Big]}<\infty\,.
\end{align}
We give sufficient conditions for this to hold.
\begin{Proposition}\label{alternativa2}
Suppose that $\E[U^2]<\infty$.
Assume that there exists $M>0$ such that, for all $i\in\Z$, $\rho_i$ is independent from all $\rho_j$'s with $|i-j|> M$. Further suppose that
\begin{align}\label{freddo3}
\sum_{i=0}^\infty \E[\rho_0^{2}\cdots\rho_{-i}^{2}]^{1/2}\,i\,\e^{-2\l i }
	<\infty\,.
\end{align}
Then \eqref{soreta} is satisfied with $\mathcal G_k$ being the $\sigma$-algebra generated by all the $\rho_i$ with $i\leq k$.
\end{Proposition}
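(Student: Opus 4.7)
The plan is to establish \eqref{soreta}, equivalently \eqref{frateto}, by exploiting the $M$-dependence of $(\rho_i)_{i\in\Z}$ to cut off the contribution of nearby indices in $U$, leaving a tail controlled by Minkowski's inequality in $L^2(\P)$. I would first decompose $U=\sum_{i=0}^\infty a_i$ where $a_i:=\e^{-2\l(i+1)}\rho_0\rho_{-1}\cdots\rho_{-i}\geq 0$ is $\sigma(\rho_0,\rho_{-1},\ldots,\rho_{-i})$-measurable. Since $\E[U]<\infty$, monotone convergence for conditional expectations gives
\begin{equation*}
\E[U\mid \mathcal G_{-n}]-\E[U]=\sum_{i=0}^\infty\bigl(\E[a_i\mid\mathcal G_{-n}]-\E[a_i]\bigr).
\end{equation*}

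Next I would use the $M$-dependence hypothesis: for $i<n-M$ the index sets $\{-i,\ldots,0\}$ and $\{j\leq -n\}$ have distance strictly greater than $M$, so the $\sigma$-algebra generated by $\rho_0,\ldots,\rho_{-i}$ is independent of $\mathcal G_{-n}$ and the corresponding summand vanishes. Hence only the indices $i\geq (n-M)\vee 0$ survive. Applying Minkowski in $L^2(\P)$ together with conditional Jensen and the trivial bound $|\E[a_i]|\leq \|a_i\|_{L^2(\P)}$, I obtain
\begin{equation*}
\bigl\|\E[U\mid \mathcal G_{-n}]-\E[U]\bigr\|_{L^2(\P)}\leq 2\sum_{i\geq (n-M)\vee 0}\|a_i\|_{L^2(\P)}=2\sum_{i\geq (n-M)\vee 0}\e^{-2\l(i+1)}\E[\rho_0^2\cdots\rho_{-i}^2]^{1/2}.
\end{equation*}

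Summing over $n\geq 1$ and exchanging the two sums by Tonelli (all terms are nonnegative), each index $i$ is counted exactly $i+M$ times, giving
\begin{equation*}
\sum_{n\geq 1}\bigl\|\E[U-\E[U]\mid \mathcal G_{-n}]\bigr\|_{L^2(\P)}\leq 2\sum_{i\geq 0}(i+M)\,\e^{-2\l(i+1)}\E[\rho_0^2\cdots\rho_{-i}^2]^{1/2}.
\end{equation*}
This is finite by \eqref{freddo3}: the contribution $\sum_i i\|a_i\|_{L^2(\P)}$ is controlled directly, while $\sum_i\|a_i\|_{L^2(\P)}$ is finite since its $i\geq 1$ portion is dominated by the previous sum and its $i=0$ term $\e^{-2\l}\E[\rho_0^2]^{1/2}$ is finite from $\E[U^2]<\infty$ via $U\geq a_0$. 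This yields \eqref{frateto}, which is the content of \eqref{soreta} with the prescribed choice of $\mathcal G_k$.

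The only potentially delicate point is the interchange of an infinite series with conditional expectation, which is immediate from the nonnegativity of the $a_i$'s; no quantitative mixing beyond $M$-dependence is needed because the exponential weight $\e^{-2\l i}$ in \eqref{freddo3} comfortably absorbs the polynomial factor $i$ produced by reindexing the double sum.
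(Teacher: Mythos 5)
Your proof is correct and follows essentially the same route as the paper's: both expand $\E[U-\E[U]\mid\mathcal G_{-n}]$ term by term, annihilate the terms with $i<n-M$ by finite-range dependence, bound the surviving ones by $2\,\E[\rho_0^2\cdots\rho_{-i}^2]^{1/2}\e^{-2\l(i+1)}$ via conditional Jensen, and interchange the two sums; your counting each $i$ exactly $i+M$ times (versus the paper's discarding of the first $M-1$ values of $n$, each finite by $\E[U^2]<\infty$) is an immaterial bookkeeping difference. Your explicit treatment of the $i=0$ term through $\E[U^2]<\infty$ is a detail the paper leaves implicit.
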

We point out that trivially $\theta^k\bar S(\l)$ is $\mathcal G_k$ measurable.
\begin{proof}
We will check \eqref{frateto}. Since $\E[U^2]<\infty$ by hypothesis, all terms in the series \ale{\eqref{frateto}} are finite. Therefore, without loss, we can ignore the first terms in \eqref{frateto} and prove that
\begin{align}\label{babbasone}
\sum_{n= M}^\infty\big\| \E[U-\E[U]\,|\,\mathcal G_{ -n}]  \big\|_{L^2(\P)}<\infty\,.
\end{align}
Using the finite-range dependence assumption, we can bound  
\begin{align*}
\E[U-\E[U]\,|\,\mathcal G_{ -n}]
	&=\sum_{i=0}^{\infty} \big( \E[\rho_0\cdots\rho_{-i}\,|\, \cG_{-n}]- \E[\rho_0\cdots\rho_{-i}]\big)\e^{-2\l (i+1)}
		\\
	&=\sum_{i= n-M}^{\infty} \big( \E[\rho_0\cdots\rho_{-i}\,|\, \cG_{-n}]- \E[\rho_0\cdots\rho_{-i}]\big)\e^{-2\l (i+1)}\,.
\end{align*}
Hence the l.h.s.~of \eqref{babbasone} can be bounded by
\begin{align*}
\sum_{n= M}^\infty&\sum_{i= n-M}^{\infty} \big\| \E[\rho_0\cdots\rho_{-i}\,|\, \cG_{-n}]- \E[\rho_0\cdots\rho_{-i}]\big\|_{L^2(\P)}\e^{-2\l (i+1)}\\
	&\leq \sum_{n= M}^\infty\sum_{i= n-M}^{\infty} 2 \E[\rho_0^2\cdots\rho_{-i}^2]^{1/2}\e^{-2\l (i+1)}\,,
\end{align*}
 which allows to conclude.
\end{proof}

We conclude this section with an application of the above results. Another application is given by the analysis of the gaussian fluctuations in the discrete time random conductance model provided in the next section.

\begin{Example}\label{pierpaolo2007} 
Consider the case of    i.i.d.~$\o_i^+$'s  such that $\bbE[ \rho_0^{2+\ep }]<+\infty$. Then  Assumption \ref{uffina} is satisfied. Moreover, for $\l > \frac{1}{2(2+\ep )}\log \bbE[ \rho_0^{2+\ep }]$,  Assumption \ref{gatto} is fulfilled and therefore the content of Theorem \ref{baldo} is valid, and  $\s^2(\cdot)$ is analytic.
 \end{Example}
\begin{proof}[Discussion of Example \ref{pierpaolo2007}] It is trivial to check the validity of Assumption \ref{uffina}.
By Propositions \ref{alternativa1} and  \ref{alternativa2} one can check that Assumption \ref{gatto} is satisfied if $\E[\rho_0^{2+\ep}]<\e^{2\l(2+\ep)}$. In fact, this guarantees, by H\"older inequality, that 
$\E[\rho_0]\leq\E[\rho_0^{2+\ep}]^{1/(2+\ep)}<\e^{2\l}$ and 
$\E[\rho_0^{2}]\leq \E[\rho_0^{2+\ep}]^{2/(2+\ep)}<\e^{4\l}$. Thanks to these three bounds it is easy to verify that $\E[\bar S(\l)]<\infty$, $\E[U^2]<\infty$ and that \eqref{freddo1}, \eqref{freddo2} and \eqref{freddo3} are verified.

We show now that $\sigma^2(\l)$ is analytic for all $\l$ satisfying $\E[\rho_0^{2+\ep}]<\e^{2\l(2+\ep)}$. To this aim we apply Proposition \ref{analiticita}. We start looking at \eqref{aennecappa}. When $n>k$ we have by independence $a_{n,k}=0$. When this is not the case we estimate
\begin{align*}
|{\rm Cov}(\rho_0\cdots\rho_{-i},\rho_n\cdots\rho_{n-j})|
	&\leq {\rm Var}(\rho_0\cdots\rho_{-i})^{1/2}{\rm Var}(\rho_n\cdots\rho_{n-j})^{1/2}\\
	&\leq \E[\rho_0^2\cdots\rho_{-i}^2]^{1/2}\E[\rho_n^2\cdots\rho_{n-j}^2]^{1/2}
	=\E[\rho_0^2]^{(i+j+2)/2}\,.
\end{align*}
For $n\leq k$ we can therefore bound $a_{n,k}\leq (k+1)\E[\rho_0^2]^{(k+2)/2}$. By the above observations we can bound the l.h.s.~ of \eqref{ritorno} by
\begin{align*}
\sum_{k=0}^\infty \e^{-2\l k} k(k+1) \E[\rho_0^2]^{(k+2)/2}\,.
\end{align*}
The last display is finite as soon as $\E[\rho_0^2]<\e^{4\l}$, which we have shown to be true under the condition $\E[\rho_0^{2+\ep}]<\e^{2\l(2+\ep)}$.
\end{proof}

\section{Diffusion coefficient in the discrete time random conductance model}\label{diff_RCM}
We consider here  the discrete time RCM with i.i.d. conductances  and study in detail its gaussian fluctuations. Assuming $A:=\bbE[c_0]$ and  $B:=\bbE[1/c_0]$ to be finite,  in the unbiased case (i.e. when $\l=0$) it is known that $v_X(0)=0$ and  that, under diffusive rescaling, an annealed (also quenched)  CLT  holds with diffusion coefficient $\s^2(0)=1/AB$ (see   \cite[Eq. (4.20), (4.22)]{demasi} and  \cite[Eq. (4.20) and Exercise 3.12]{B11}). We recall that $v_X(\l)$ is given by \eqref{v_RCM} for $\l \geq 0$ and that $v_X(\l)= - v _X(-\l)$ (see Proposition \ref{fiumicino}). By applying the results obtained in Section \ref{caniefigli} we get the following:

\begin{Theorem}\label{suppongo} Consider the discrete time RCM with i.i.d. conductances $(c_i)_{i \in \bbZ}$.
Suppose that for some $\ep>0$ it holds $\E[c_0^{2+\ep}]<\infty$ and $\E[1/c_0^{2+\ep}]<\infty$. Then   Assumption \ref{uffina} is satisfied. Moreover,  for any $ \l \in \bbR$, the discrete time random walk in random environment satisfies the annealed CLT
\begin{align}\label{diluvio}
\frac{ X_n^{\o,\l}-v_X(\l)\cdot n}{\sqrt n}
	\longrightarrow \mathcal N(0,\sigma^2(\l))
	\qquad \mbox{under } \P\otimes P^{\omega,\l}_0\,.
\end{align}
The above  diffusion coefficient $\sigma^2(\l)$ satisfies  $\s^2(0)=\frac{1}{AB}$,   $\s^2(\l)=\s^2(-\l)$ for all $\l\in \bbR$   and it is given, for $\l >0$, by
\begin{equation}\label{rey}
\begin{split}
\sigma^2(\l)
	=\frac{4(\e^{2\l}-1)^2}{(\e^{2\l}-1+2AB)^3}\Big[ & \frac{2 CD}{\e^{2\l}+1}+\frac{4(A^2D+B^2C)}{\e^{4\l}-1}
	+\frac{8A^2B^2}{(\e^{2\l}-1)(\e^{4\l}-1)}\\ &+AB+\frac{4AB-A^2B^2}{\e^{2\l}-1}-\frac{2A^2B^2\e^{2\l}}{(\e^{2\l}-1)^2}\Big]\,,
\end{split}
\end{equation}
where  $A:=\E[c_0]$, $B:=\E[1/c_0]$, $C:=\E[c_0^2]$ and $D:=\E[1/c_0^2]$.
\end{Theorem}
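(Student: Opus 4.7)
The plan is to split the statement into three parts: verification of Assumption \ref{uffina}, the CLT itself, and the explicit formula \eqref{rey}. The i.i.d.~assumption trivially yields ergodicity of $\bbP$ under shifts, and the condition $\bbE[c_0^{2+\ep}]+\bbE[1/c_0^{2+\ep}]<\infty$ implies $\bbE[|\log c_0|]<\infty$, so Remark \ref{fieno} gives Assumption \ref{uffina}. The case $\l=0$ of the CLT and the identity $\s^2(0)=1/(AB)$ are the classical results quoted at the beginning of Section \ref{diff_RCM}. The symmetry $\s^2(\l)=\s^2(-\l)$ follows from the reflection invariance observed after Example \ref{lorenzo2}: the i.i.d.~law makes $(c_x)$ and $(c_{-x})$ equidistributed, hence Assumption \ref{jesus} holds and the centered walks at $\l$ and $-\l$ have the same annealed fluctuations. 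It is therefore enough to handle $\l>0$.

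For this case I would apply Theorem \ref{baldo} after verifying Assumption \ref{gatto}. The key simplification in the RCM is the telescoping
\begin{equation*}
\rho_0\rho_{-1}\cdots\rho_{-i}=\frac{c_{-i-1}}{c_0}\,,\qquad \rho_1\rho_2\cdots\rho_i=\frac{c_0}{c_i}\,,
\end{equation*}
which, combined with independence, gives $\E[\rho_0^{p}\cdots\rho_{-i}^{p}]=\E[c_0^{p}]\,\E[c_0^{-p}]$ for every $i\geq 0$ and every relevant exponent $p\in\{1,2,2+\ep\}$. Substituting into \eqref{freddo1}, \eqref{freddo2} and \eqref{freddo3} leaves only geometric series in $e^{-2\l}$, all convergent for $\l>0$ thanks to the moment hypotheses. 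Hence Propositions \ref{alternativa1} and \ref{alternativa2} apply (the latter with finite-range parameter $M=1$, since in the RCM $\rho_i$ and $\rho_j$ are independent whenever $|i-j|>1$), giving Assumption \ref{gatto} and therefore \eqref{diluvio}.

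For the explicit expression \eqref{rey} I would use Proposition \ref{sviluppo}. The crucial observation is that, setting
\begin{equation*}
W:=\sum_{k\geq 1}c_{-k}\,e^{-2\l k}\,,\qquad Z:=\sum_{k\geq 1}\frac{e^{-2\l k}}{c_k}\,,
\end{equation*}
one has $U=W/c_0$ and $V=c_0 Z$, with the triple $(W,c_0,Z)$ mutually independent by the i.i.d.~structure. Every moment appearing in \eqref{coldiretti1} therefore factorizes (for instance $\E[VU^2]=\E[Z]\,\E[W^2]\,\E[1/c_0]$), and each factor is a closed-form geometric sum in $e^{\pm 2\l}$ with coefficients in $A,B,C,D$. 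For the $\s_2^2$ part I would compute $\E[U\,\theta^n U]$ by writing $\theta^n U=c_n^{-1}\sum_{k\geq 1}c_{n-k}e^{-2\l k}$ and splitting the expectation of $c_{-j}c_{n-k}/(c_0 c_n)$ according to whether $n-k$ coincides with $0$ (an $AB$--correction), with $-j$ (a $CB^2$--correction), or neither (which contributes the background $A^2B^2=\E[U]^2$ times geometric factors); summing the corrections over $n\geq 1$ gives absolutely convergent geometric series. Combined with the $\s_1^2$ contribution and the overall prefactor $1/(1+2\E[U])^3=v_X(\l)^3=(e^{2\l}-1)^3/(e^{2\l}-1+2AB)^3$, these pieces assemble into \eqref{rey}.

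The main obstacle is the last, purely algebraic step: collecting the three correction classes across the double sum in $(j,k)$ and the outer sum in $n$, rewriting each elementary geometric sum as a rational function of $e^{2\l}$, and simplifying until the bracket matches that of \eqref{rey}. A natural consistency check is that the $\l\downarrow 0$ limit of \eqref{rey} must reproduce $\s^2(0)=1/(AB)$. All the analytic work has already been done in Section \ref{caniefigli}; what remains is only the i.i.d.~factorization above and the bookkeeping.
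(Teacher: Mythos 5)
Your proposal is correct and follows essentially the same route as the paper's proof in Appendix \ref{larome}: reduction to $\l>0$ by reflection invariance, verification of Assumption \ref{gatto} via Propositions \ref{alternativa1} and \ref{alternativa2} using the telescoping $\rho_0\cdots\rho_{-i}=c_{-i-1}/c_0$, and computation of \eqref{rey} through Proposition \ref{sviluppo} with the same coincidence analysis for $\E[U\,\theta^n U]$. Your explicit factorization $U=W/c_0$, $V=c_0Z$ with $(W,c_0,Z)$ independent is just a slightly more systematic packaging of the moment computations the paper carries out directly.
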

The proof of Theorem \ref{suppongo} follows from the results presented in Section \ref{caniefigli} by straightforward computations and is therefore postponed to Appendix \ref{larome}.

\medskip

In the deterministic case, that is, when the conductances are all equal to a constant, we obtain a biased simple random walk whose diffusion coefficient $ \sigma^2(\l)$ can be easily computed (or, equivalently, derived from \eqref{rey}):  \begin{equation}\label{sigmino}
 \sigma^2(\l) =4 /(\e^{
 \l}+\e^{-\l})^4 \qquad \forall \l \in \bbR\,.
 \end{equation}
 See Figure \ref{pellegrino} for a plot of $\s^2(\l)$. We note that in the deterministic case $\s^2(\l)$ is analytic, it is strictly increasing for $\l <0$ and strictly decreasing for $\l>0$.
The following proposition treats the non deterministic case, in which a different behavior emerges.
\begin{figure}[!ht]
	\begin{center}
	\centering
	\mbox{\hbox{
	\includegraphics[width=0.6\textwidth]{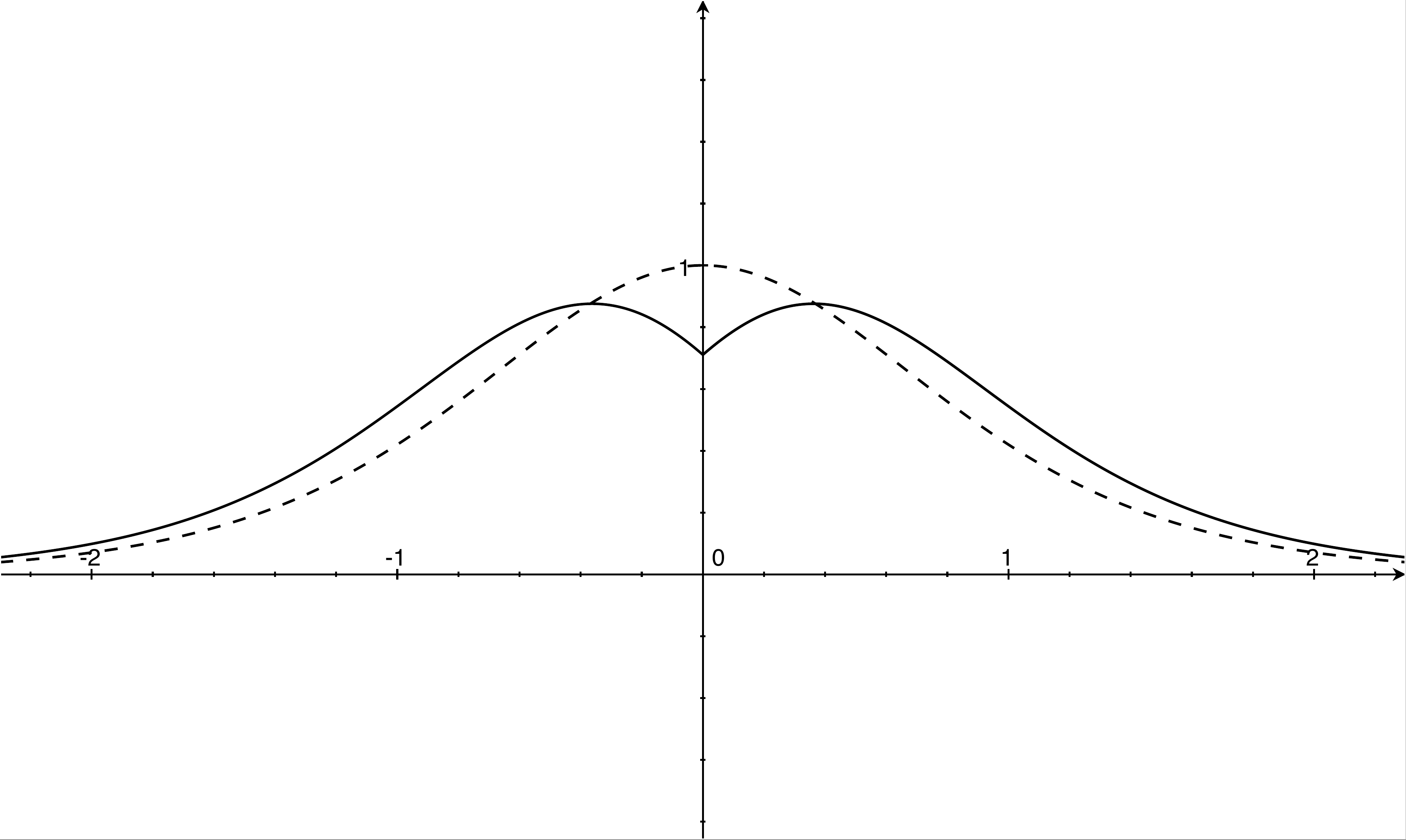}}}
	\end{center}\caption{The solid line represents the function $\sigma^2(\l)$ in the case of i.i.d.~conductances sampled from a uniform random variable in the interval $[1,10]$. This is plotted against the function $\sigma^2(\l)$ (dotted line) in the deterministic case of constant conductances, cfr.~\eqref{sigmino}.}
	\label{pellegrino}
\end{figure}

\medskip

\begin{Proposition}\label{unpofru}
For genuinely random conductances, $\s^2(\l)$ is continuous everywhere, it is analytic on $\bbR\setminus\{0\}$, but it is not in general differentiable at $\l=0$. Moreover, $\s^2(\l)$ is in general not monotone on $(-\infty,0]$ and on   $[0,\infty)$.
\end{Proposition}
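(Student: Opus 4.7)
\textbf{Plan of proof of Proposition \ref{unpofru}.} For the analyticity on $\bbR\setminus\{0\}$ I would work directly from the explicit representation \eqref{rey}. For every $\l>0$ each of the denominators appearing in the bracket and in the prefactor, namely $\e^{2\l}-1$, $\e^{4\l}-1$, $\e^{2\l}+1$ and $\e^{2\l}-1+2AB$, is strictly positive and real-analytic in $\l$, so $\sigma^2$ is real-analytic on $(0,+\infty)$. The symmetry $\sigma^2(\l)=\sigma^2(-\l)$ stated in Theorem \ref{suppongo} then transports analyticity to $(-\infty,0)$.

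To prove continuity at $0$ I would perform a Laurent expansion after the change of variable $x:=\e^{2\l}-1$, so that $x=2\l+O(\l^2)$. The prefactor becomes $P(x)=4x^2/(x+2AB)^3 = x^2/(2A^3B^3)+O(x^3)$, which vanishes to order $x^2$. Among the six summands in the bracket of \eqref{rey} only two carry an $x^{-2}$ singularity, namely $8A^2B^2/(x^2(2+x))$ and $-2A^2B^2(1+x)/x^2$; a direct computation gives
\[
\frac{8A^2B^2}{x^2(2+x)}-\frac{2A^2B^2(1+x)}{x^2}\;=\;\frac{2A^2B^2(2-3x-x^2)}{x^2(2+x)}\;=\;\frac{2A^2B^2}{x^2}+O(1/x),
\]
so a partial cancellation lowers the order of divergence to $x^{-2}$ with leading coefficient $2A^2B^2$; the remaining four summands contribute only $O(1/x)$. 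Multiplying by $P(x)$ yields $\sigma^2(\l)\to 2A^2B^2/(2A^3B^3)=1/(AB)=\sigma^2(0)$ as $\l\to 0^+$, and the left limit follows by symmetry.

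For non-differentiability I would push the same expansion one order further. Collecting the coefficient of $x$ after multiplying the Laurent expansion of the bracket (through order $1/x$) by $P(x)=x^2/(2A^3B^3)-3x^3/(4A^4B^4)+O(x^4)$, and rewriting via $C=A^2+\var(c_0)$, $D=B^2+\var(1/c_0)$, one arrives at
\[
\sigma^2(\l)\;=\;\frac{1}{AB}+K\,\l+O(\l^2),\qquad K\;=\;\frac{AB(1-AB)+2A^2\var(1/c_0)+2B^2\var(c_0)}{A^3B^3}.
\]
By the symmetry $\sigma^2(\l)=\sigma^2(-\l)$ the left derivative at $0$ equals $-K$, so differentiability forces $K=0$. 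To exhibit a case where $K\neq 0$ I would take the symmetric Bernoulli law $\bbP(c_0=\varepsilon)=\bbP(c_0=1/\varepsilon)=1/2$ with $\varepsilon\neq 1$: then $A=B$, $\var(c_0)=\var(1/c_0)=A^2-1$, and a short calculation gives $K=3(A^2-1)/A^4>0$.

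For non-monotonicity I would reuse this example. As $\l\to+\infty$ the prefactor in \eqref{rey} decays like $4\e^{-2\l}$ while the bracket converges to $AB$, so $\sigma^2(\l)\to 0$. Since $\sigma^2(0)=1/(AB)>0$, a monotone $\sigma^2$ on $[0,+\infty)$ would have to be nonincreasing; but $K>0$ means $\sigma^2$ strictly increases on a right neighborhood of $0$, a contradiction. Symmetry then yields non-monotonicity on $(-\infty,0]$ as well. The main obstacle in the argument is the Laurent expansion underlying the continuity and differentiability steps: one has to track the $x^{-2}$, $x^{-1}$, constant and linear contributions of all six summands of the bracket simultaneously and combine them correctly with the two leading terms of $P(x)$ to isolate $K$; the cancellations are delicate but explicit, and once $K$ is in hand the remaining claims follow cheaply.
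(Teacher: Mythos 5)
Your proposal is correct and follows essentially the same route as the paper: a first-order expansion of \eqref{rey} at $\l=0$ (your coefficient $K$ is algebraically identical to the paper's $a_1$ after writing $C=A^2+\var(c_0)$, $D=B^2+\var(1/c_0)$), the reflection symmetry $\s^2(\l)=\s^2(-\l)$ to rule out differentiability when $K\neq0$, an explicit genuinely random law making the coefficient positive, and the decay $\s^2(\l)\to0$ at infinity to contradict monotonicity. Your symmetric two-point example $c_0\in\{\varepsilon,1/\varepsilon\}$ simply replaces the paper's two-valued and uniform examples and is equally valid.
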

\begin{proof}
Continuity on $(0,+\infty)$ follows directly by \eqref{rey}. For small $\l>0$ we can calculate, by a Taylor expansion,
\begin{align*}
\frac{1}{(\e^{2\l}-1+2AB)^{3}}&= 
 \frac{1}{( 2AB+2\l +o(\l))^3}= \frac{1}{8 A^3 B^3} \frac{1}{( 1+ \frac{\l}{AB} +o(\l))^3}=\frac{1- \frac{3\l}{AB} +o(\l) }{8 A^3 B^3}\,.
\end{align*}
Hence, for small $\l>0$  we can rewrite \eqref{rey} as
\begin{equation}\label{fifu}
\begin{split}
\sigma^2(\l)
	 =& \frac{1}{2 A^3B^3} \Big( 1- \frac{3\l}{AB} \Big)
		\Big[  4(A^2D+B^2C)\l +4 A^2B^2(1-\l)\\ 
		&\quad +(4AB-A^2B^2)2\l -2A^2B^2(1+2\l)\Big]+o(\l)\\
	=&\frac{1}{AB}+a_1\l+o(\l)\,,
	\end{split}
\end{equation}
where
\begin{equation}\label{auno}
\begin{split}
 a_1
 := \frac{2( A^2D+B^2C)}{A^3B^3}-\frac{5}{AB}+\frac{1}{A^2B^2}\,.
\end{split}
\end{equation}
Since $\sigma^2(0)$ is equal to $(AB)^{-1}$ (see the discussion before  Theorem \ref{suppongo})  we see that also in the genuinely random case (i.e., when $AB\not=1$) $\sigma^2(\l)$ is a continuous function everywhere.

\smallskip

\begin{figure}[!b]
	\begin{center}
	\centering
	\mbox{\hbox{
	\includegraphics[width=0.5\textwidth]{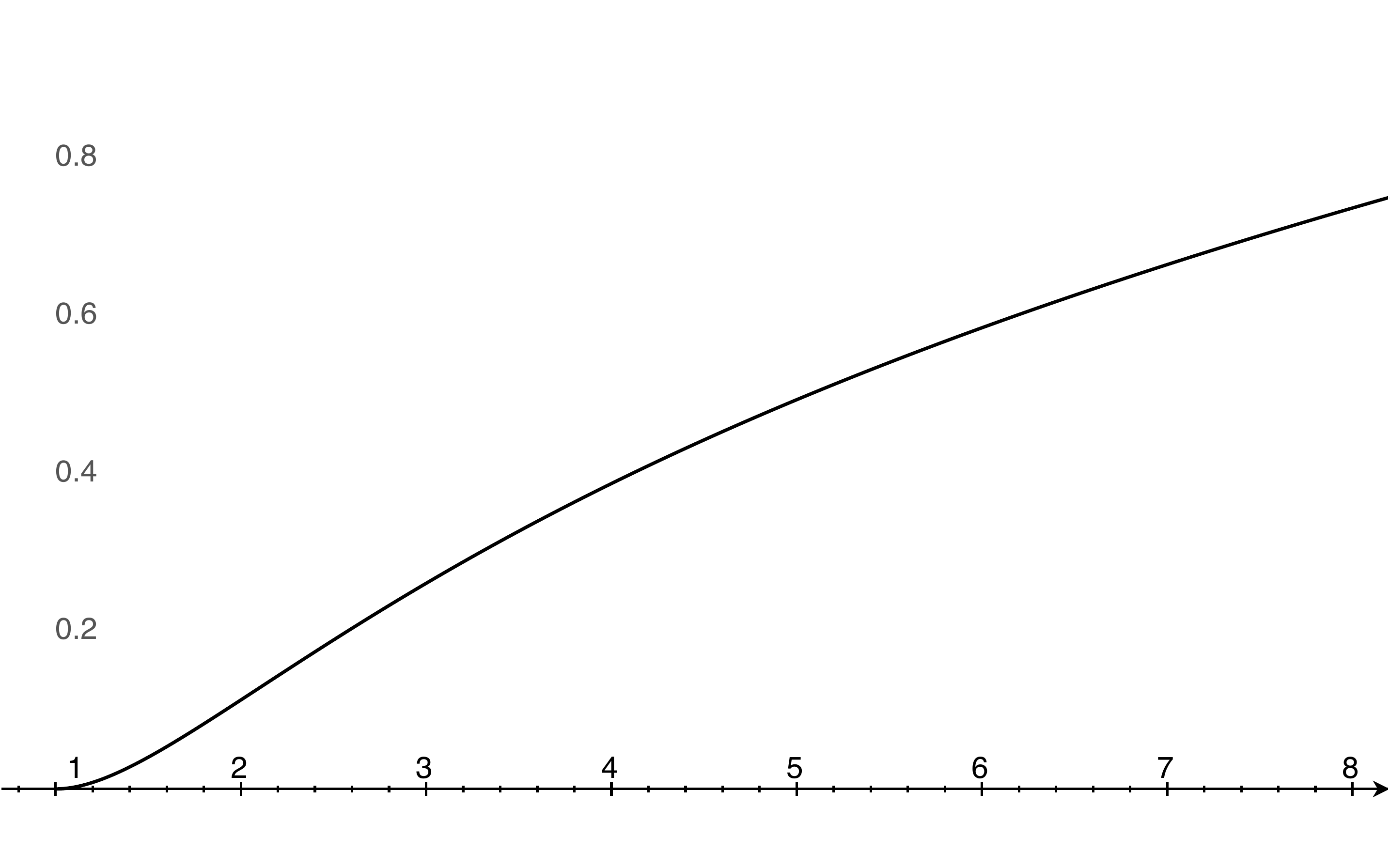}}}
	\end{center}\caption{Plot of $a_1(x)$, the first order coefficient of the Taylor series of $\sigma^2(\l)$, when the i.i.d.~conductances are  uniform random variables between $1$ and $x$.}
	\label{pocoyo100}
\end{figure}

On the other hand, by reflection invariance, we note that if $a_1\not=0$ then $\sigma^2(\l)$ cannot have first derivative in $\l=0$ and in particular cannot be an analytic function. In fact, $\sigma^2(\l)=\sigma^2(-\l)$ implies that the right derivative of $\sigma^2$ in $0$ is equal to the opposite of the left derivative. Hence, if the first derivative in $0$ exists, it has to be equal to $0$, that is, we should have $a_1=0$. It is easy to exhibit examples where this is not true:
\begin{itemize}
\item[(1)] Take i.i.d.~conductances that can assume only two values. Without loss of generality, we suppose they attain value $M>1$ with probability $p\in(0,1)$ and value $1$ with probability $(1-p)$. Multiplying $a_1$ by $A^3B^3>0$ we obtain
\begin{align*}
A^3B^3 a_1
	=&p(6-5M'+2M'')+p^2(-36+25M'-7M'')\\
	&+p^3(60-40M'+10M'')+p^4(-30+20M'-5M'')\,,
\end{align*}
where $M'=M+M^{-1}$ and $M''=M^2+M^{-2}$. It is not hard to show that this number is always strictly bigger than $0$, so that we also always have $a_1>0$.
\item[(2)] Consider independent conductances sampled from a uniform distribution between $1$ and $x>1$. One can easily calculate $A= (1+x)/2$, $B= \log x /(x-1)$, $C=  (x^3-1)/(3(x-1))$ and $D= 1/x$; it follows that
\begin{align*}
a_1(x)
	=\frac 4{x(x+1)}\frac{(x-1)^3}{\log^3 x}+\frac {16}{3}  \frac{x^3-1}{(x+1)^3\log x}
	- \frac{10(x-1)}{(x+1)\log x}+\frac{4(x-1)^2}{(x+1)^2\log^2 x}\,,
\end{align*}
and also in this case $a_1=a_1(x)>0$ as soon as $x>1$ (see Figure \ref{pocoyo100}).
\end{itemize}
These examples, besides representing cases of non-differentiability of $\sigma^2(\l)$ in $\l=0$, also show that $\sigma^2(\l)$ can be strictly increasing for small $\l>0$, in contrast with the deterministic case where we showed $\sigma^2(\l)$ to be always decreasing for $\l>0$ (notice that in every case $\sigma^2(\l)\to 0$ as $\l\to\infty$). 
\end{proof}

It could be possible  that the non-monotonicity of $\sigma^2(\l)$ for $\l\geq 0$ happens as soon as some disorder is introduced in the system. Maybe the coefficient  $a_1$ appearing in \eqref{auno} is strictly bigger than $0$ whenever the conductances are genuinely random.

\section{Discussion of Example \ref{lorenzo1}}\label{lorenzo1_esteso}

We recall that $A>0$ and $\g>2$ (for most of the arguments below one can take $\g>1$ but at the end it is necessary to take $\g>2$).
Moreover, we have  set $r(k,k-1):=A$ for all $k\in\Z$ and we have introduced  $\tilde \tau=(\tilde\tau_k)_{k\in\Z}$, which is  a renewal point process on $\bbZ$  with $\tilde\tau_0=0$ \ale{and,  for $k\not=0$, 
 $\P(\tilde{\tau}_{k+1}- \tilde{\tau}_{k} \geq j)=c/j^\gamma$ for any $j  \in\N_+$. }
 
We write $\t = (\tau_k)_{k\in\Z}$ for the renewal point process  given by the $\bbZ$--stationary version of $\tilde \tau$. More precisely, numbering  the points of $\t$ in increasing order with $\t_0 \leq 0 <\t_ 1$, $\t$ is  characterized as follows: (i) the law of $\t$ is left invariant by integer shifts; (ii) the random variables $(\t_{i+1}-\t_i: i\in \bbZ)$ are independent and 
(iii)  the random variables  $(\t_{i+1}-\t_i: i\in \bbZ \setminus \{0\})$ have the same law of $\tilde \t_1$. Such a renewal point process exists since  $\E[\tilde \t_1]<\infty$ (see \cite{DV} and \cite[Appendix C]{FMRT}).
 Due to the above properties (ii) and (iii) it is trivial to build  the renewal point process $\tau $ once the joint law of $(\t_0, \t_1)$ is determined. This last joint law can be recovered from the basic identity given by \cite[Eq.~(C.3)]{FMRT}. Since in what follows we only need  the law of $\t_1$ we explain how to get it. Taking \cite[Eq.~(C.3)]{FMRT} with test function $f\equiv 1$, one gets that $\bbP( 0\in \t)= 1/ \bbE[\tilde \t_1]$.  Let us now take a positive integer $m$ and, given   a  subset $\xi \subset \bbZ$ , we define   $f(\xi)=1$ if $m$ is the smallest element in $\xi\cap [1, +\infty)$, otherwise $f(\xi)=0$. Then applying  \cite[Eq.~(C.3)]{FMRT} to such a test function $f$ and using  that $\bbP( 0\in \t)= 1/ \bbE[\tilde \t_1]$,  we get
\begin{equation}
\bbP( \t_1 = m ) = \bbP( 0\in \t) \bbP(\tilde\t_1 \geq m ) = \frac{c}{\bbE[\tilde \t_1]}  m^{-\g} \,, \qquad m \in \N_+   \,.
\end{equation}
 From the above formula we have 
 \[ 
c_1 m^{1-\g}
 	\leq \bbP (\t_1 \geq m) 
 	\leq c_2 m^{1-\g}\,,\qquad  \ale{m \in \N_+}\,.
 \]

We recall our definition of the jump rates. For $k\in\Z$, we define
\begin{align*}
Z_k=
\begin{cases}
1\quad\mbox{if }k\notin \tau\,,\\
\frac12\quad\mbox{if }k\in \tau\,,
\end{cases}
\end{align*}
and set ${r(-k,-k+1)}={1}/{Z_k}$, $\o_k^+:=\frac{r(k,k+1)}{r(k,k-1)+ r(k,k+1)}$ and  $\o_k^-:=\frac{r(k,k-1)}{r(k,k-1)+ r(k,k+1)}$. Hence, 
\ale{$\rho_{-k}=\frac{r(-k,-k-1)}{r(-k,-k+1)}= A Z_k$ (recall that   $r(\cdot,\cdot-1)\equiv A$).} \ale{By construction, the model satisfies Assumption \ref{uffina}.}
 
Thanks to Lemma \ref{buddha} we can now calculate
\begin{align}\label{fragola}
\E[\bar S(\l)]
	=1+2\sum_{i=0}^\infty\E[ Z_0\cdots Z_i]\,(A\e^{-2\l})^{ i+1}\,.
\end{align}
We claim that 
\begin{align}\label{mora}
\E[ Z_0\cdots Z_n]=n^{1-\gamma+o(1)}\,.
\end{align}
In fact, on the one hand it holds
\begin{equation}\label{super}
\E[ Z_0\cdots Z_n]
	\ge  \E[ Z_1\cdots Z_n\mathds 1_{  \tau_1 > n    } ]/2 
	=\P(\tau_1>n )/2 
	\geq (c_1/2) (n+1)^{1-\g}   
	=n^{1-\gamma+o(1)}\,.
\end{equation} 
On the other hand, we take $A(n):=\lceil(\gamma-1)\log_2 n\rceil$  and we write
\begin{align}\label{spuma}
\E[ Z_0\cdots Z_n]
	=\E[ Z_0\cdots Z_n\mathds 1_{\{\tau_{A(n)}\leq n\}}]
		+ \E[ Z_0\cdots Z_n\mathds 1_{\{\tau_{A(n)}>n\}}]\,.
\end{align}
Notice that under the event $\{\tau_{A(n)}\leq n\}$ we have that 
\begin{equation}\label{spuma1}
 Z_0\cdots Z_n\leq 2^{- A(n)} \ale{\leq} n^{1-\gamma}\,.
\end{equation} The event $\{\tau_{A(n)}> n\}$, instead, implies that  the sum of the $A(n)$ addenda 
$\t_1$, $\t_2-\t_1$, $\t_3 -\t_2$,..., $\t_{A(n)}- \t_{A(n)-1}$ is larger than $n$. In particular, at least one of the above addenda is larger than $n /A(n)$. Since in any case $Z_0\cdots Z_n \leq 1$, we can bound
\begin{align}\label{spuma2}
\E[ Z_0\cdots Z_n\mathds 1_{\{\tau_{A(n)}>n\}}] 
	& \leq \P( \tau_{A(n)}>n) 
	\leq\P(\t_1>n/A(n))+\sum_{j=2}^{A(n)}\P( \t_j-\t_{j-1} > n/A(n))\nonumber\\
	& \leq c_2 (n/A(n)) ^{1-\g}+ A(n)\,c (n/A(n) ) ^{-\g}
	= n^{1-\g+o(1)}\,.
\end{align}
As a byproduct of \eqref{spuma}, \eqref{spuma1} and \eqref{spuma2} we conclude that 
$\E[ Z_0\cdots Z_n]
 \leq  
	n^{1-\gamma+o(1)}$.
The above result and \eqref{super} imply our claim \eqref{mora}.

\smallskip

Now we take $\g>2$, so that the series $\sum _{n=1}^\infty n ^{1-\g+o(1) }$ is summable. By \eqref{fragola} and \eqref{mora} we see that $\E[\bar S(\l)]< +\infty$ for all $\l\geq \l_+=\frac 12 \log A$, while $\E[\bar S(\l)]=+\infty$ for $\l<\l_+$. It follows that $v_X(\l)> 0$ for $\l\geq \l_+$, while $v_X(\l)\leq 0$  for $\l<\l_+$, that is, $v_X$ has a discontinuity in $\l_+$. We also observe that $\l_-$ is finite and $\l_-<\l_+$. In fact, \ale{by Proposition \ref{ferb} and Lemma \ref{buddha},}
 we have that $v_X(\l)<0$ iff $\E[\bar F(\l)]=1+2\sum_{i=0}^\infty \E[\rho_0^{-1}\cdots\rho_i^{-1}]\e^{2\l(i+1)}$ is finite. By \ale{Jensen's inequality} we can bound 
\begin{align*}
\E[\rho_0^{-1}\cdots\rho_i^{-1}]
	=A^{-i-1}\E[Z_0^{-1}\cdots Z_{-i}^{-1}]
	=A^{-i-1}\E[2^{N(i)}]
	\geq A^{-i-1}2^{\E[N(i)]}
	\geq A^{-i-1}2^{ci}
\end{align*}
where $N(i):=\#\{\tau \cap \{0,1,\dots,i\}\}$ and $c>0$ is a constant that only depends on $\gamma$ (here we are using the fact that $N(i)/i\to 1/\E[\tilde \tau_1]$ almost surely). Therefore $\E[\bar F(\l)]=+\infty$ if $2^c\e^{2\l}/A\geq 1$ and in particular $\l_-\leq \l_+-\tfrac c2\log 2$.


%
%
%
%

\section{Discussion of Example \ref{lorenzo4}} \label{lorenzo4_esteso}

It is not hard to show that this environment satisfies \ale{Assumptions \ref{uffina} and  \ref{jesus} (for the latter, observe that } the reflection of the Heads case has the same distribution of the Tails case, and vice versa). 

\smallskip

Let us first suppose that we have  heads.
Then  for each $m \in \bbZ$ we get 
\[ \rho_{2m+1} \rho _{2m+2}=\frac{ a_m^-}{a_m^+} \frac{a_m^+}{a_m^-}=1\,.
\]
As a consequence, we have for $i \geq 1$
\[
\rho_1 \rho_2 \cdots \rho _i = 
\begin{cases}
 1 & \text{ if $i$ is even} \,,\\
 \rho _i = \frac{ r_i^-}{r_i^+}=\frac{a_j^-}{a_j^+} & \text{ if $i=2j+1$}\,.
\end{cases}
\]
Hence for Heads we would have
\begin{equation}\label{passo1}
\sum_{i=1}^\infty\rho_1\rho_2\cdots\rho_i\e^{-2\l i}=
 \sum_{j=1}^\infty \e^{- 4 \l j}+
\sum_{j=0}^\infty\frac{a_j^-}{a_j^+}  \e ^{- 2\l (2j+1)}
\,.
\end{equation}

Let us now  suppose that we have tails.
Then  for each $m \in \bbZ$ we get  
\[ \rho_{2m} \rho _{2m+1}=\frac{ a_m^-}{a_m^+} \frac{a_m^+}{a_m^-}=1\,.
\]
As a consequence, we have for $i \geq 1$
\[
\rho_1 \rho_2 \cdots \rho _i = 
\begin{cases}
\rho_1 \rho_i=\frac{r_1^-}{ r_1^+} \frac{ r_i^-}{r_i^+}=  \frac{a_0^+  }{ a_0^-  }\frac{a_j^-}{a_j^+}& \text{ if $i=2j$}\,,\\
\ale{ \rho_1 = \frac{a_0^+  }{ a_0^-  } }& \text{ if $i$ is  odd} \,.
\end{cases}
\]
Hence for tails  we would have
\begin{equation}\label{passo2}
\sum_{i=1}^\infty\rho_1\rho_2\cdots\rho_i\e^{-2\l i}= \sum_{j=1}^\infty  \frac{a_0^+  }{ a_0^-  }\frac{a_j^-}{a_j^+} \e^{- 4 \l j}+
\sum_{j=0}^\infty   \frac{a_0^+  }{ a_0^-  }\   \e ^{- 2\l (2j+1)}
\,.
\end{equation}
Since $r_0^+=a_{-1}^-$ for Heads and $r_0^+= a_0^+$ for Tails,  from \eqref{passo1} and \eqref{passo2} we get
\begin{equation}
\begin{split}
&\E[\hat S(\l)]=\E\Big[\frac{\e^{-\l}}{r_0^+}\big(1+\sum_{i=1}^\infty\rho_1\rho_2\cdots\rho_i\e^{-2\l i}\big)\Big]\\
	&= \frac{\e^{-\l}}{2} \bbE[1/ a^-_{-1}] \big( 1+  \sum_{j=1}^\infty \e^{- 4 \l j}+
\sum_{j=0}^\infty \bbE\bigl[a_j^-/a_j^+\bigr]  \e ^{- 2\l (2j+1)} \big)\\
	&\qquad+ \frac{\e^{-\l}}{2} \big( \ale{1/\bbE[a_0^+]}+\sum_{j=1}^\infty  \bbE \bigl[ 1/ a_0^-  \bigr] \bbE [ a_j^-/a_j^+  \bigr] \e^{- 4 \l j}+
\sum_{j=0}^\infty \bbE \bigl[ 1/ a_0^- \bigr]   \e ^{- 2\l (2j+1)} \bigr)\\
	&=  \frac{\e^{-\l}}{2} B\big( 1+  \sum_{j=1}^\infty \e^{- 4 \l j}+
AB \sum_{j=0}^\infty   \e ^{- 2\l (2j+1)} \big) + \frac{\e^{-\l}}{2} B\big( 1+AB  \sum_{j=1}^\infty     \e^{- 4 \l j}+
 \sum_{j=0}^\infty    \e ^{- 2\l (2j+1)} \bigr).\end{split}
\end{equation}
Hence $\l_+=0$ (which by reflection invariance implies that also $\l_-=0$) and, for $\l>0$, 
\begin{equation}
\E[\hat S(\l)]
	=   \frac{B\e^{-\l}}{2(1-\e^{-4\l})}\big(2+ \e^{-2\l}(AB+1)+\e^{-4\l}(AB-1)\big)\,.
\end{equation}
From \eqref{gioventu} we finally get
\begin{equation}\label{pane}
v(\l)  
	= \frac{  2(1-\e^{-4\l})}{ B\e^{-\l} \big(2+ \e^{-2\l}(AB+1)+\e^{-4\l}(AB-1)\big)}\,,\qquad \l\geq0\,.
\end{equation}
By Taylor expansion of \eqref{pane},   since $2+ \e^{-2\l}(AB+1)+\e^{-4\l}(AB-1) =2(1 + AB- 3 \l AB +\l)\ale{+o(\l)} $, and since $(1+x)^{-1} = 1-x +\ale{ o(x)}$ we have
\begin{equation}
\begin{split}
 v(\l)& = \frac{4\l- 8 \l^2+o(\l^2) }{B (1-\l +o(\l))(1+AB) ( 1+\frac{1-3AB}{1+AB} \l +o(\l))}\\
 &=\frac{(4\l- 8 \l^2+o(\l^2)) (1+\l +o(\l))  ( 1-\frac{1-3AB}{1+AB} \l +o(\l)) }{B(1+AB)} \,.
\end{split}
\end{equation}
The above equation implies that
\begin{equation}
\begin{split}
v(\l)& = \frac{ 4}{ B(1+AB)} \l+ \frac{  -8+4-4\frac{1-3AB}{1+AB} }{ B(1+AB)} \l^2 +o(\l^2)\\
&= \frac{ 4}{ B(1+AB)} \l +8 \frac{AB-1}{ B(1+AB)^2} \l^2+o(\l^2)\,.
\end{split}
\end{equation}
We then conclude that $ \partial^+ _{\l,\l}v(0)=16\frac{AB-1}{ B(1+AB)^2} $.

\smallskip

From this expression,  the second right derivative in $\l=0$ is null if and only if $AB=1$, that is, when $a^+_0$ is almost surely a constant. In all the other cases, the right \ale{second  derivative} in $\l=0$ is different from $0$. Since this model satisfies Assumption 1,  \ale{ this conclusion  is absurd 
by Proposition \ref{redenzione}.}
\bigskip

\appendix
\section{A result on analytic functions}
\begin{Lemma}\label{latticini}
Let $a,b,c $ be positive numbers with $a<b$ and let $(a_n )_{n\geq 0}$ be a sequence such that $\sum_{n=0}^\infty |a_n| \e^{-c  \l n}<\infty$ for any $\l \in (a,b)$. Then the function $f(\l):= \sum_{n=0}^\infty a_n \e^{-c  \l n}$ is well defined and analytic  for $\l\in (a,b)$.
\end{Lemma}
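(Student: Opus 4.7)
The natural approach is to reduce the statement to a power series by the change of variable $z = \mathrm{e}^{-c\lambda}$. Under this substitution the interval $\lambda \in (a,b)$ corresponds to $z \in (\mathrm{e}^{-cb}, \mathrm{e}^{-ca})$, and the series to be studied becomes the power series $g(z) := \sum_{n\geq 0} a_n z^n$. The hypothesis says precisely that $g$ converges absolutely at every real $z$ in the interval $(\mathrm{e}^{-cb}, \mathrm{e}^{-ca})$, so by the Cauchy--Hadamard formula the radius of convergence of $g$ is at least $\mathrm{e}^{-ca}$, hence $g$ is holomorphic on the complex disc $\{|z| < \mathrm{e}^{-ca}\}$. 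Since $\lambda \mapsto \mathrm{e}^{-c\lambda}$ is entire and maps the complex half-plane $\{\mathrm{Re}\,\lambda > a\}$ into this disc, the composition $f(\lambda) = g(\mathrm{e}^{-c\lambda})$ is holomorphic on $\{\mathrm{Re}\,\lambda > a\}$, which contains $(a,b)$.

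A more self-contained way to phrase the same argument, avoiding any explicit appeal to the radius of convergence, is via the Weierstrass $M$--test together with the Weierstrass convergence theorem for sequences of holomorphic functions. Fix $\lambda_0 \in (a,b)$ and pick $\lambda^\ast$ with $a < \lambda^\ast < \lambda_0$. By hypothesis, $M_n := |a_n|\mathrm{e}^{-c\lambda^\ast n}$ is summable. For every $\lambda$ in the complex half-plane $H := \{\lambda \in \mathbb{C}\,:\,\mathrm{Re}\,\lambda \geq \lambda^\ast\}$ one has
\begin{equation*}
\bigl|a_n \mathrm{e}^{-c\lambda n}\bigr| = |a_n|\,\mathrm{e}^{-c\,\mathrm{Re}(\lambda)\,n} \leq M_n,
\end{equation*}
so the series defining $f$ converges uniformly on $H$. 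Since each term is entire, $f$ is holomorphic on the interior of $H$, and in particular at $\lambda_0$. As $\lambda_0 \in (a,b)$ is arbitrary, $f$ is analytic on $(a,b)$.

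There is no real obstacle here: the only subtle point is that the hypothesis is stated for real $\lambda$, while analyticity is a complex statement. This is bridged by the elementary observation $|\mathrm{e}^{-c\lambda n}| = \mathrm{e}^{-c\,\mathrm{Re}(\lambda)\,n}$, which allows to upgrade the given real absolute convergence to uniform absolute convergence on a complex half-plane. I would present the Weierstrass $M$--test version in the paper, since it is self-contained and does not require invoking Pringsheim--Boas (which is mentioned in the paper but is in fact a stronger result than what is needed here).
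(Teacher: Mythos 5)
Your proof is correct, but it takes a genuinely different route from the paper. The paper stays entirely on the real line: it first shows $f\in C^\infty(a,b)$ by dominated convergence, computes $f^{(k)}(\l)=\sum_n a_n(-cn)^k\e^{-c\l n}$, bounds $\bigl|f^{(k)}(\l)/k!\bigr|\le h(\l-\ep)/\ep^k$ using the elementary inequality $(cn)^k/k!\le \e^{cn\ep}/\ep^k$, and then invokes the Pringsheim--Boas criterion for real analyticity (a locally uniform bound on $\limsup_k |f^{(k)}/k!|^{1/k}$). You instead complexify: either view $f$ as the power series $g(z)=\sum_n a_n z^n$ evaluated at $z=\e^{-c\l}$, whose radius of convergence is at least $\e^{-ca}$ by the hypothesis, or apply the Weierstrass $M$--test on the half-plane $\{\mathrm{Re}\,\l\ge \l^\ast\}$ with $\l^\ast\in(a,\l_0)$ and conclude holomorphy by the Weierstrass convergence theorem; restriction to the real interval then gives real analyticity. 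Both arguments are sound, and your key bridging observation $|\e^{-c\l n}|=\e^{-c\,\mathrm{Re}(\l)\,n}$ is exactly what makes the upgrade from real absolute convergence to complex uniform convergence work. What each buys: your version is shorter, uses only standard complex-analysis facts, and in fact yields holomorphy of $f$ on the whole half-plane $\{\mathrm{Re}\,\l>a\}$; the paper's version never leaves the real domain and is self-contained apart from the citation of Pringsheim--Boas, which may be why the authors chose it, but your $M$--test variant would serve equally well as a replacement.
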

\begin{proof}
The function $f(\l)$ is well defined since  by hypothesis  the series is absolutely convergent. To prove analiticity we apply
the Theorem of  Pringsheim-Boas (cf. \cite[Thm.~3.1.1]{KP}). To this aim we first need to show that $f$ is $C^\infty$ on $(a,b)$. This follows easily from the dominated convergence theorem, which also implies that 
$f^{(k)}$, the $k^{th}$--derivative of $f$,  has the form
\[  f^{(k)} (\l):= \sum_{n=0}^\infty a_n \e^{-c  \l n} (-c n)^k\,,
\]
where the series in the r.h.s. is absolutely convergent.

Defining $g(\l):= \limsup_{k\to \infty} |f^{(k)}(\l)/k!|^{1/k}$, we need to show that for any $\l_0\in (a,b)$ $g(\l)$ is bounded from above uniformly  as $\l$ varies in a neighbourhood of $\l_0$. Then the Theorem of  Pringsheim-Boas  would imply the analiticity of $f$. To upper bound $g$  we use that
$
\frac{(c n)^k}{k!}\leq \frac{e^{c\, n \ep  }}{\ep^k}$ for $\ep>0$ 
to estimate
\ale{\begin{equation}\label{piccola_anima}
\Big | \frac{f^{(k)} (\l)}{k!}\Big |\leq  \sum_{n=0}^\infty \frac{|a_n|}{k!} \e^{-c  \l n} (c n)^k\leq \frac{1}{\ep^k}
\sum_{n=0}^\infty |a_n| \e^{-c ( \l-\ep) n}=:\frac{ h(\l-\ep)}{\ep^k}\,.
\end{equation}}
By hypothesis $h(\l-\ep)$ is finite if $\l-\ep \in (a,b)$. 
We now take  $\l$ such that 
  $|\l -\l_0| \leq \ep/10$ where  $\ep$ is defined  as half of the distance   between $\l_0$ and $\{a,b\}$. Then both $\l$ and $\l-\ep$ are in $(a,b)$ and by  \eqref{piccola_anima} we get that $g(\l) \leq 1/ \ep$. 
\end{proof}

%
%
%
\section{Proof of Proposition \ref{ariccia}}\label{nonlaso}
We follow the proof of \cite[Theorem 2.1.9]{Z} and adapt the \ale{arguments} to the continuous time case. \ale{  As already mentioned, the main difference lies  in   the proof of the result analogous to \cite[Lemma 2.1.17]{Z}, where new  phenomena  have to be controlled.}


We denote by $\theta$ the shift on the space $\Theta$  of environments. In particular, we have $(\theta \xi)_x := \xi_{x+1}=(r_{x+1}^-,r_{x+1}^+)$. 
For $n\in\Z$, we introduce the hitting times 
\begin{align}\label{vincent}
T_n= \ale{T_n(\xi, \l)}:=\inf\{t\geq 0\,:\,\ale{Y_t^{\xi,\l}}=n\}\,,
\end{align}
with the convention that the infimum of an empty set is $+\infty$. We also set $\tau_0=0$ and
\begin{align*}
\tau_n:=\ale{\tau_n(\xi,\l)}=T_n-T_{n-1}\qquad \mbox{for $n\geq 1$}\,,\\
\tau_{-n}=T_{-n}-T_{-n+1}\qquad \mbox{for $n\geq 1$}\,.
\end{align*}
As in \cite[Lemma 2.1.10]{Z}, one can prove that if \ale{$\limsup _{t\to\infty}Y_t^{\xi,\l}=+\infty$} almost surely, then the sequence $\{\tau_i\}_{i\geq 1}$ is stationary and ergodic.
The idea is then to apply the ergodic theorem to the sequence \ale{$\{\tau_i\}_{i\geq 1}$}. We prove the equivalent of \cite[Lemma 2.1.12]{Z}:
\begin{Lemma}\label{ciriola}
We have that
\begin{align*}
(a)\qquad &\E[E^{\xi,\lambda}_0[\tau_1]]=\E[\hat S(\l)]\,,\\
(b)\qquad &\E[E^{\xi,\lambda}_0[\tau_{-1}]]=\E[\hat F(\l)]\,.
\end{align*}
\end{Lemma}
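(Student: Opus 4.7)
I will prove (a); part (b) follows by the symmetric argument with ``$+$'' and ``$-$'' swapped and the iteration run forward from $0$ instead of backward. The heart of the proof is a first-step analysis that collapses the hitting-time equation into a first-order recursion in a discrete derivative, combined with an averaged tail-vanishing estimate.

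Write $d(x,\xi):=E^{\xi,\l}_x[T_{x+1}] \in [0,+\infty]$, the quenched expected time to jump from $x$ to $x+1$. A first-step analysis at the initial exponential holding time of rate $r_x^+(\l)+r_x^-(\l)$, followed by the strong Markov property applied at the possible excursion to $x-1$, gives the recursion
\begin{equation*}
d(x) = \frac{1}{r_x^+(\l)} + \rho_x(\l)\,d(x-1), \qquad x \le 0.
\end{equation*}
Iterating backwards from $x=0$ produces, for every integer $N\ge 1$,
\begin{equation*}
d(0) = \sum_{i=0}^{N-1} \frac{1}{r_{-i}^+(\l)}\prod_{j=0}^{i-1} \rho_{-j}(\l) \;+\; \Big(\prod_{j=0}^{N-1} \rho_{-j}(\l)\Big)\,d(-N).
\end{equation*}
All terms being nonnegative, the partial sum on the right increases to $\hat S(\l)$ as $N\to\infty$, giving the pointwise inequality $d(0)\ge\hat S(\l)$; taking $\E$ yields $\E[E^{\xi,\l}_0[\tau_1]]\ge\E[\hat S(\l)]$, which already proves (a) when the right-hand side is $+\infty$.

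For the matching upper bound when $\E[\hat S(\l)]<\infty$, I would take expectations in the displayed identity and show that the residual $R_N:=\E\big[\prod_{j=0}^{N-1}\rho_{-j}(\l)\cdot d(-N)\big]$ tends to $0$. Two ingredients are available: (i) stationarity of $\bbP$ together with the translation equivariance $d(x)(\xi)=d(0)(\theta^{x}\xi)$ gives $d(-N)\stackrel{d}{=}d(0)$, so in particular $\E[d(-N)]=\E[d(0)]$; and (ii) under $\E[\hat S(\l)]<\infty$ the Jensen-type bound \eqref{sanpaolo} (and its analogue here) forces $\E[\log\rho_0(\l)]=\E[\log\rho_0]-2\l<0$, so by Birkhoff applied to $\tfrac1N\sum_{j=0}^{N-1}\log\rho_{-j}(\l)$ the product $\prod_{j=0}^{N-1}\rho_{-j}(\l)\to 0$ almost surely at exponential rate. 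A decoupling/truncation argument based on (i) and (ii) drives $R_N\to 0$ and concludes $\E[d(0)]=\E[\hat S(\l)]$. Part (b) follows identically from the twin recursion $\tilde d(x)=1/r_x^-(\l)+\rho_x(\l)^{-1}\tilde d(x+1)$ iterated forward from $0$, with $\tilde d(x):=E^{\xi,\l}_x[T_{x-1}]$.

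The main obstacle, flagged by the authors as the novel continuous-time feature compared to \cite[Lemma 2.1.17]{Z}, is precisely the estimate $R_N\to 0$. In the discrete-time setting one has $1/\omega_{-i}^+(\l)=1+\rho_{-i}(\l)$, so the analogue of the weight $1/r_{-i}^+(\l)$ is algebraically tied to the iterated product and the tail of $\bar S(\l)$ directly dominates $d(-N)$; this gives an essentially free control of the remainder. In continuous time the rates $r_{-i}^+(\l)$ are an independent source of possible blow-up with no such link to the $\rho$'s, so one cannot dominate $d(-N)$ by a tail of $\hat S(\l)$ and must instead rely on the distributional equivariance of $d$ under shifts combined with the Birkhoff-type decay of the product. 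I expect the authors' full proof in Appendix~\ref{nonlaso} to implement precisely this combination, with care taken to exclude the degenerate cases where the walk is not ballistic to the right.
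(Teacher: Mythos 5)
Your first-step recursion, the backward iteration, and the resulting lower bound $E^{\xi,\l}_0[\tau_1]\ge \hat S(\l)$ (hence part (a) whenever $\E[\hat S(\l)]=+\infty$) coincide with the paper's argument. The gap is in the converse bound, which is the crux of the lemma. First, your ingredient (ii) --- that $\E[\hat S(\l)]<\infty$ forces $\E[\log\rho_0(\l)]<0$ ``by the analogue of \eqref{sanpaolo}'' --- is not justified in continuous time: the discrete-time bound rests on $1/\o^+_{-i}(\l)\ge 1$, whereas here the weights $1/r^+_{-i}(\l)$ can be arbitrarily small and no integrability of $\log r^\pm_0$ is assumed, so Jensen applied to the $i$-th term of $\hat S(\l)$ gives a vacuous lower bound when $\E[(\log r_0^+)^+]=\infty$. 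Second, and more seriously, even granting a.s.\ exponential decay of $\prod_{j=0}^{N-1}\rho_{-j}(\l)$, the step ``a decoupling/truncation argument based on (i) and (ii) drives $R_N\to 0$'' is exactly the hard part and is not supplied: the product and $d(-N)$ are dependent, a.s.\ decay of the product controls neither $\E\bigl[\prod_{j<N}\rho_{-j}(\l)\bigr]$ nor $\E\bigl[\prod_{j<N}\rho_{-j}(\l)\,d(-N)\bigr]$, and the identity $\E[d(-N)]=\E[d(0)]$ is useless because $\E[d(0)]$ is not known to be finite a priori --- it is the very quantity being computed. Indeed $R_N$ decreases pointwise to $\E[d(0)-\hat S(\l)]$ (by monotone convergence applied to the increasing partial sums), so proving $R_N\to 0$ is essentially equivalent to the statement of the lemma; without an a priori pointwise or moment bound on $d$ your scheme is circular.

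The paper closes this differently, and that is the missing idea: truncate the hitting time rather than trying to control the untruncated remainder in expectation. Working with $E^{\xi,\l}_0[\tau_1\mathds{1}_{\{\tau_1\le M\}}]$, the same first-step analysis yields a recursive inequality whose remainder after $m$ iterations is bounded by $M\,\rho_0(\l)\rho_{-1}(\l)\cdots\rho_{-m}(\l)$, i.e.\ a deterministic constant times the bare product. Then, instead of Birkhoff and a sign condition on $\E[\log\rho_0(\l)]$, one uses only that $\E[\hat S(\l)]<\infty$ makes $\hat S(\l)$ a.s.\ convergent, so its terms $\frac{1}{r^+_{-m}(\l)}\rho_0(\l)\cdots\rho_{-m+1}(\l)$ tend to $0$ a.s., together with ergodicity to produce an infinite (random) subsequence $m_k$ with $r^+_{-m_k}(\l)\ge\ep$; along this subsequence the bare product vanishes, giving the pointwise bound $E^{\xi,\l}_0[\tau_1\mathds{1}_{\{\tau_1\le M\}}]\le\hat S(\l)$ for a.e.\ $\xi$, and letting $M\to\infty$ finishes. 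You correctly located where the continuous-time case departs from \cite[Lemma 2.1.17]{Z}, but the mechanism you propose to resolve it does not close as stated.
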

\begin{proof}
We just show $(a)$, since $(b)$ can be proved in an identical way. For each environment $\xi$ we have
\begin{align}\label{bandito1}
E^{\xi,\lambda}_0[\tau_1]
	=\frac{1}{r_0^+(\l)+r_0^-(\l)}+\ale{\frac{r_0^-(\l)}{r_0^+(\l)+r_0^-(\l)} } \big(E^{\theta^{-1}\xi}_0[\tau_1]+E^\xi_0[\tau_1]\big)\,.
\end{align}
Manipulating the last expression we obtain
\begin{align}\label{bandito2}
E^{\xi,\lambda}_0[\tau_1]
	=\frac{1}{r_0^+(\l)}+\rho_0(\l) E^{\theta^{-1}\xi}_0[\tau_1]\,,
\end{align}
so that by iteration we \ale{get, for any  integer $m\geq 0$,}
\begin{align}\label{bandito3}
E^{\xi,\lambda}_0[\tau_1]
	=&\frac{1}{r_0^+(\l)}+\frac{1}{r_{-1}^+(\l)}\rho_0(\l)+\frac{1}{r_{-2}^+(\l)}\rho_0(\l)\rho_{-1}(\l)+\dots + \nonumber\\
	&+\frac{1}{r_{-m}^+(\l)}\rho_0(\l)\rho_{-1}(\l)\dots\rho_{-m+1}(\l)+\rho_0(\l)\rho_{-1}(\l)\dots\rho_{-m}(\l) E_0^{\ale{\theta^{-m-1}\xi}}[\tau_1]\,.
\end{align}
By positivity of all the summands, we deduce, for all environments $\xi$ and for all $m\in\N$, that $E^{\xi,\lambda}_0[\tau_1]\geq\frac{1}{r_0^+(\l)}+ \dots +\frac{1}{r_{-m}^+(\l)}\rho_0(\l)\rho_{-1}(\l)\dots\rho_{-m+1}(\l)$. Letting $m\to\infty$ we obtain that $E^{\xi,\lambda}_0[\tau_1]\geq \hat S(\l)$ and in particular $\E[E^{\xi,\lambda}_0[\tau_1]]\geq \E[\hat S(\l)]$. This already \ale{shows that $(a)$ holds   if $\E[\hat S(\l)]=+\infty$}.

\smallskip

Conversely, let us suppose that $\E[\hat S(\l)]$ is finite. We claim that $E^{\xi,\lambda}_0[\tau_1]\leq \hat S(\l)$, which would complete the proof of the lemma thanks to the complementary bound proved above. Repeating \eqref{bandito1} on the event $\{\tau_1\leq M\}$ we obtain the equivalent of \eqref{bandito2}, which now reads
\begin{align*}
E^{\xi,\lambda}_0[\tau_1\mathds{1}_{\{\tau_1\leq M\}}]
	\leq\frac{1}{r_0^+(\l)}+\rho_0(\l) E^{\theta^{-1}\xi}_0[\tau_1\mathds{1}_{\{\tau_1\leq M\}}]\,.
\end{align*}
As in \eqref{bandito3} we iterate this relation and bound
\begin{align}\label{bandito3bis}
E^{\xi,\lambda}_0[\tau_1\mathds{1}_{\{\tau_1\leq M\}}]
	\leq\hat S(\l)+\rho_0(\l)\rho_{-1}(\l)\dots\rho_{-m}(\l) M\,, \qquad \ale{\forall m \in \N}\,.
\end{align}
Since we assumed that 
$\E[\hat S(\l)]=\E[\frac{1}{r_0^+(\l)}+\frac{1}{r_{-1}^+(\l)}\rho_0(\l)+\frac{1}{r_{-2}^+(\l)}\rho_0(\l)\rho_{-1}(\l)+\dots]<\infty$, 
we must have that, $\P$-a.s., 
$\lim_{m\to\infty} \frac{1}{r_{-m}^+(\l)}\rho_0(\l)\rho_{-1}(\l)\dots\rho_{-m+1}(\l)=0$. 
We fix now an $\varepsilon>0$ small enough. By ergodicity, for almost every $\xi$ there exists \ale{ a $\xi$--dependent  infinite sequence $m_1<m_2<m_3<\dots$} such that $r_{-m_k}^+(\l)\geq \varepsilon$. But this, together with the previous observation, implies that also 
$\lim_{\ale{k\to\infty}}\rho_0(\l)\rho_{-1}(\l)\dots\rho_{-m_k+1}(\l)=0$ 
for $\P$-a.e.~$\xi$.  In particular, rewriting \eqref{bandito3bis} as $E^{\xi,\lambda}_0[\tau_1\mathds{1}_{\{\tau_1\leq M\}}] \leq \hat S(\l)+\ale{\rho_0(\l)\rho_{-1}(\l)\dots\rho_{-m_k}(\l)} M$ and sending $k\to\infty$, we have $E^{\xi,\lambda}_0[\tau_1\mathds{1}_{\{\tau_1\leq M\}}]\leq \hat S(\l)$ for almost every $\xi$. Finally, we let $M\to\infty$ and use dominated convergence to obtain the claim.
\end{proof}

\medskip

We are finally ready to conclude the proof of Proposition \ref{ariccia}.  We prove the continuous version of \cite[Lemma 2.1.17]{Z}, which will require some work.
We can build the random walk \mic{$Y_t^{\xi,\l}$} as follows. The environment $\{ r_x^\pm(\l) \}_{x\in \bbZ}$ is defined as usual on a probability space with law $\bbP$.  
We introduce  a sequence $\g=(\g_n )_{n\geq 0}$ of i.i.d.~uniform random variables with value on $[0,1]$ and another sequence $W:=(W_n )_{n\geq 0}$ of i.i.d.~exponential  random variables with mean $1$, both defined on some other probability space with probability $Q$ and independent of each other.
On the product space with probability $\bbP\otimes  Q $ we define the following objects (in what follows, when we write ``a.s.", we mean ``$\bbP\otimes Q$ --a.s.'').
 We iteratively define
$X_0:=0$ and 
\[ X_{n+1} :=X_n+ \mathds{1}( \g_n \leq \o_{X_n}^+(\l)) -  \mathds{1}( \g_n > \o_{X_n}^+(\l)) \,.
\]
We also define  $\hat W_n:= (r_{X_n}^+(\l)+r_{X_n}^-(\l))^{-1}  W_n$. Note that $\hat W_n$ is an exponential variable of parameter  $r_{X_n}^+(\l)+r_{X_n}^-(\l)$. We set $U_0:=0$ and $U_n:= \hat W_0+\hat W_1+\cdots +\hat W_{n-1}$ for $n\geq 1$. Take $c_0>0$ such that $r_0^+(\l)+r_0^-(\l) \leq c_0$ with positive probability. 
By \cite[Thm.~2.1.2]{Z} the random walk $(X_n)_{n\geq 0}$ explores a.s.~at least one half-line of $\bbZ$. In particular, it will visit infinitely many points $x$ such that $r_x^+(\l)+r_x^-(\l) \leq c_0$.
It then follows that 
$\lim _{n \to \infty} U_n=+\infty$ a.s. Therefore we can define a.s.~for all $t\geq 0$ the state  \mic{$Y_t^{\xi,\l}$ as  $Y_t^{\xi,\l}:= X_{n(t)}$} where $n(t)$ is the unique  integer such that $U_{n(t)} \leq t< U_{n(t)+1}$.

\begin{Claim}\label{pisa10}
It holds
$$
\limsup _{t\to \infty} \frac{\mic{Y_t^{\xi,\l}}}{t}\leq \frac{1}{\bbE[ \hat S(\l)]} \qquad a.s. 
$$\end{Claim}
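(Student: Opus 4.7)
My plan is to relate $Y_t^{\xi,\l}/t$ to the hitting times $T_n$ via the running maximum and conclude by an ergodic-theorem argument. The first step is to dichotomize the behavior of the embedded jump chain $X_n$ using \cite[Thm.~2.1.2]{Z}: either $X_n \to -\infty$ almost surely, or $\limsup_{n} X_n = +\infty$ almost surely. In the first case, $Y_t^{\xi,\l}$ is bounded above a.s.\ (since it takes values in $\{X_n\}_{n\geq 0}$), so $Y_t^{\xi,\l}/t \to 0$ and the bound is immediate.

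In the remaining case one has $\limsup_{t\to\infty} Y_t^{\xi,\l} = +\infty$ a.s., so by the continuous-time analog of \cite[Lemma 2.1.10]{Z} (announced in the excerpt) the increments $\{\tau_i\}_{i \geq 1}$ form a stationary ergodic sequence under $\bbP \otimes P_0^{\xi,\l}$. Combining this with Lemma \ref{ciriola}(a) and applying Birkhoff's theorem (which allows the mean to be infinite for non-negative stationary ergodic sequences), I would obtain
$$\frac{T_n}{n} \;=\; \frac{1}{n}\sum_{i=1}^n \tau_i \;\xrightarrow[n\to\infty]{}\; \bbE[\hat S(\l)] \;\in\; (0,+\infty] \qquad \text{a.s.}$$

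Next I would introduce the running maximum $M_t := \max_{0 \leq s \leq t} Y_s^{\xi,\l}$ and use the elementary bounds $Y_t^{\xi,\l} \leq M_t$ and $T_{M_t} \leq t$. Since $M_t \to +\infty$ a.s.\ in this case, a standard time-change applied to the preceding convergence yields
$$\frac{M_t}{t} \;\leq\; \frac{M_t}{T_{M_t}} \;\xrightarrow[t\to\infty]{}\; \frac{1}{\bbE[\hat S(\l)]} \qquad \text{a.s.},$$
with the convention $1/{+\infty} := 0$. Taking $\limsup$ in $Y_t^{\xi,\l} \leq M_t$ would then deliver the claim.

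The main obstacle I expect is justifying the continuous-time counterpart of \cite[Lemma 2.1.10]{Z}, namely the stationarity and ergodicity of $\{\tau_i\}_{i \geq 1}$ under $\bbP \otimes P_0^{\xi,\l}$. Stationarity should follow by applying the strong Markov property at $T_1$ together with $\bbP$-translation invariance of $\xi$, identifying $(\tau_{i+1},\tau_{i+2},\ldots)$ in law with $(\tau_1,\tau_2,\ldots)$ read in the shifted environment $\theta\xi$. For ergodicity one pulls back any shift-invariant event on the $\tau_i$'s to a spatially shift-invariant event on the environment and invokes $\bbP$-ergodicity. The continuous-time setting demands extra care because the $\tau_i$ are non-negative random variables depending both on $\xi$ and on the auxiliary exponential clocks $(W_n)$ from the construction in the excerpt, so one must carefully track measurability of the hitting times and the independence of the holding times from the jump-chain $\sigma$-algebra.
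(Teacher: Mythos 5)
Your proposal is correct and follows essentially the same route as the paper: the dichotomy from \cite[Thm.~2.1.2]{Z}, the ergodic theorem applied to $\{\tau_i\}$ via Lemma \ref{ciriola}(a), and the comparison $Y_t^{\xi,\l}\leq M_t$ with $T_{M_t}\leq t$ (your $M_t$ is exactly the paper's $k(t)$). The paper likewise invokes the continuous-time analog of \cite[Lemma 2.1.10]{Z} for stationarity and ergodicity of the $\tau_i$'s without further detail, so your identification of that step as the main point requiring care matches its treatment.
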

  \begin{proof}[Proof of Claim \ref{pisa10}] Since \mic{$Y_t^{\xi,\l}$} is a random--time change of  the associated jump process $X_n$, 
by \cite[Thm.~2.1.2]{Z} we have that either $\lim_{t\to \infty} \mic{Y_t^{\xi,\l}}=-\infty$  a.s., or 
$\limsup_{t\to \infty} \mic{Y_t^{\xi,\l}} =+\infty$ a.s. In the first case we have nothing to prove since $\lim_{t\to \infty} {\mic{Y_t^{\xi,\l}}}/{t} \leq 0$  a.s. Hence we can suppose that $\limsup_{t\to \infty} \mic{Y_t^{\xi,\l}} =+\infty$.

Given $t$ we call $k(t)$ the unique integer such that $T_{k(t)}\leq t <T_{k(t)+1}$ (recall definition \eqref{vincent}). Equivalently, $k(t) = \sup \{ Y^{\xi, \l}_s \,:\, 0\leq s \leq t\}$. Since  $\limsup_{t\to \infty} \mic{Y_t^{\xi,\l}} =+\infty$ a.s., we have that $\lim _{t\to \infty} k(t)= +\infty$  a.s.  As in \cite{Z}, we combine Lemma \ref{ciriola} with the ergodicity of the sequence $\{\tau_i\}_{i\geq 1}$ to obtain
\begin{align*}
\lim_{n\to\infty}\frac{T_n}{n}=\lim_{n\to\infty}\frac{\sum_{i=1}^n\tau_i}{n}\xrightarrow{n\to\infty}\E[\mic{E^{\xi,\l}_0}[\tau_1]]=\E[\hat S(\l)]=:\a\quad \text{a.s. }
\end{align*}
As a consequence, 
$\lim_{n\to\infty}{n}/{T_n}=1/\a$ a.s.
Since $\lim _{t\to \infty} k(t)= +\infty$  a.s.,  we have that $\lim_{t\to\infty}{k(t)}/{T_{k(t)}}=1/\a$ a.s. By definition of $k(t)$ we have $\mic{Y_t^{\xi,\l}}\leq k(t) $, so that
\[ \limsup_{t\to\infty} \frac{\mic{Y_t^{\xi,\l}}}{t} \leq  \limsup _{t\to \infty} \frac{k(t)}{T_{k(t)}}=\frac{1}{\a} \qquad \text{ a.s.}
\] 
This concludes the proof of our claim.
\end{proof}

Note that by similar arguments one can prove that  $\liminf _{t\to \infty} {\mic{Y_t^{\xi,\l}}}/{t}\geq -{1}/{\bbE[ \hat F(\l)]}$ a.s. 
In particular, if $\bbE[ \hat S(\l)]=\infty$ and $\bbE[ \hat F(\l)]=\infty$, then we have that 
$\lim_{t\to \infty}{\mic{Y_t^{\xi,\l}}}/{t}=0$ a.s. This concludes the proof of Proposition \ref{ariccia}--(c).

\begin{Claim}\label{pisa2} 
If  $\a:=\bbE[ \hat S(\l)]\in (0,\infty)$, then 
$$
\liminf_{t\to \infty} \frac{\mic{Y_t^{\xi,\l}}}{t} \geq \frac{1}{\a}\qquad a.s.
$$
\end{Claim}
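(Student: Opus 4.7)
The plan is to mirror the structure of the proof of Claim~\ref{pisa10}: establish $T_n/n\to\alpha$ a.s., and then pass from this hitting-time limit to a lower bound on $Y_t/t$ by controlling how far below its running maximum the walker can linger. Under the hypothesis $\alpha<\infty$, Lemma~\ref{ciriola} gives $E^{\xi,\lambda}_0[\tau_1]=\hat S(\lambda)<\infty$ a.s., so $\tau_1<\infty$ a.s., and iteratively $T_n<\infty$ a.s.~for every $n\in\N$; in particular the walker is transient to $+\infty$. Birkhoff's theorem applied to the stationary ergodic sequence $(\tau_i)_{i\geq1}$, combined with Lemma~\ref{ciriola}, then yields $T_n/n\to\alpha$ a.s., exactly as in the proof of Claim~\ref{pisa10}.

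For each $n\geq0$, set $\sigma_n:=\sup\{t\geq0:Y_t^{\xi,\lambda}<n\}$, a.s.~finite by transience, and note that $Y_t\geq n$ whenever $t>\sigma_n$. It suffices to show $\limsup_{n\to\infty}\sigma_n/n\leq\alpha$ a.s.: for any $\varepsilon>0$ and $t$ large, choosing $n(t):=\lfloor t/(\alpha+\varepsilon)\rfloor$ gives $\sigma_{n(t)}\leq t$, hence $Y_t\geq n(t)$ and consequently $\liminf Y_t/t\geq 1/(\alpha+\varepsilon)$; sending $\varepsilon\downarrow0$ then concludes. Writing $\sigma_n=T_n+R_n$ with $R_n:=\sigma_n-T_n\geq0$ and using $T_n/n\to\alpha$, the task reduces to proving $R_n/n\to0$ a.s. By the strong Markov property of $Y$ at $T_n$ together with the shift invariance of $\P$, the sequence $(R_n)_{n\geq0}$ is jointly stationary under $\P\otimes Q$, with marginal distribution equal to that of $R:=\sup\{s\geq0:\tilde Y_s^{\xi,\lambda}<0\}$, the last negative time of a walker $\tilde Y$ starting at $0$ in environment $\xi$.

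Granted $\E_{\P\otimes Q}[R]<\infty$, stationarity gives $\sum_{n\geq1}\P(R_n>\varepsilon n)=\sum_{n\geq1}\P(R>\varepsilon n)\leq \E[R]/\varepsilon<\infty$, so Borel--Cantelli yields $R_n\leq\varepsilon n$ eventually a.s.; since $\varepsilon>0$ is arbitrary, $R_n/n\to0$ a.s. The main obstacle is therefore the integrability $\E[R]<\infty$, which encodes the ``new phenomena'' of the continuous-time setting: the walker can accumulate many jumps in a short time if the rates $r_x^\pm$ are unbounded, so the $1$-Lipschitz bound used in the discrete argument of \cite{Z} fails. My plan for this step is to write $R=\sum_{k=0}^{N}\hat W_k$, where $N:=\sup\{k\geq0:X_k<0\}$ is the last discrete-time index at which the skeleton is negative (finite a.s.~by transience of $X$), and to exploit the independence of the exponential clocks $(W_k)$ from the skeleton data $(X_k,\gamma_k)$ to rewrite
\[
\E[R]=\sum_{k\geq0}\E\!\left[\frac{\mathds{1}(k\leq N)}{r^+_{X_k}(\lambda)+r^-_{X_k}(\lambda)}\right].
\]
This sum will in turn be controlled via a regeneration decomposition of the skeleton's negative excursions: their number has a geometric tail by discrete transience, and the annealed expected contribution of each excursion to the sum is bounded by a shifted copy of $\hat S(\lambda)$, whose annealed expectation is finite by hypothesis---yielding $\E[R]<\infty$ and completing the proof.
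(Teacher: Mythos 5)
Your reduction is sound in its outer layers: setting $\sigma_n:=\sup\{t:Y_t^{\xi,\l}<n\}$ and $R_n:=\sigma_n-T_n$, the passage from ``$(R_n)_{n\ge 0}$ jointly stationary with $\E_{\P\otimes Q}[R]<\infty$'' to $R_n/n\to 0$ a.s.\ via Borel--Cantelli, and thence to $\liminf_t Y_t^{\xi,\l}/t\ge 1/\a$, is correct, and it is a genuinely different route from the paper's. The paper never controls a last-exit time: it bounds $m_\ell:=\inf\{Y_s^{\xi,\l}:T_\ell\le s<T_{\ell+1}\}$ from below by $(1-2\ep)\ell$ eventually, using only that $\tau_{\ell+1}=o(\ell)$ a.s.\ (soft, from $T_\ell/\ell\to\a$) and that a descent of depth $2\ep\ell$ must cross at least $\ep\ell$ sites with uniformly bounded jump rates, so that by Cram\'er it takes time at least $\d\ell$ up to a summable error $\e^{-c\ep\ell}$. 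That argument needs no moment bound beyond $\E[\hat S(\l)]<\infty$ itself.

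The genuine gap in your proposal is the decisive step $\E[R]<\infty$, which you only outline, and whose outline does not work as stated. First, $R=U_{N+1}$ is a \emph{last-exit} time, so it contains not only the skeleton's negative excursions but also the interludes spent at nonnegative sites between consecutive negative excursions: the walk may climb to a high level $k$ and only afterwards return below $0$, and the duration of such a conditioned downward trip is governed by $\hat F$-type quantities whose annealed expectation is infinite precisely in the regime $\E[\hat S(\l)]<\infty$ (cf.\ Proposition \ref{ariccia} and the dichotomy arguments). Your decomposition ``each excursion contributes a shifted copy of $\hat S(\l)$'' does not account for these interludes at all. Second, the number of negative excursions is geometric only quenched, with an environment-dependent escape probability that can be arbitrarily close to $0$; under $\P\otimes Q$ its tail need not be geometric, and in any case the excursion count and the excursion durations share the environment and are therefore not independent, so one cannot multiply an expected number by an expected contribution. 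For a general stationary ergodic environment satisfying only Assumption \ref{uffina} and $\E[\hat S(\l)]\in(0,\infty)$ I see no reason why $\E[R]<\infty$ should hold at all; this integrability is exactly the kind of ``new phenomenon'' the paper's proof of Claim \ref{pisa2} is built to avoid, by demanding only an $o(\ell)$ control of the downward fluctuation between the ladder epochs $T_\ell$ and $T_{\ell+1}$. To complete your argument you would either have to prove $\E[R]<\infty$ (which looks at least as hard as the claim itself and may be false), or replace it by a weaker route to $R_n/n\to 0$ --- at which point you are essentially pushed back to an argument of the paper's type.
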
 
We point out that Claim \ref{pisa2} together with Claim \ref{pisa10} gives  Proposition \ref{ariccia}--(a). 
By similar arguments we can also get Proposition \ref{ariccia}--(b) and the proof of Proposition \ref{ariccia} is concluded.
\begin{proof}[Proof of Claim \ref{pisa2}] Recall the arguments and definitions in the proof of Claim \ref{pisa10}.
By Lemma \ref{ciriola}--(a) $\t_1$ is finite  a.s. By iteration one gets that $T_n=\t_1+\cdots +\t_n$ is finite a.s. (recall \eqref{vincent}). As a consequence $\limsup _{t\to +\infty} Y_t^{\xi,\l}=+\infty $ and $\lim _{t\to +\infty} k(t)=+\infty$ a.s. 
 Given $\ell\in \N$ call 
$m_\ell:=\inf \{Y^{\xi, \l}_s\,:\, T_\ell \leq s < T_{\ell+1} \}$.
Note that $m_\ell$ depends only on $\mic{\xi}$ and $\g$. We have
\[    m_{k(t) }\leq  \inf \{Y^{\xi, \l}_s\,:\, T_{k(t)} \leq s\leq t\} \leq \mic{Y_t^{\xi,\l}}\,.
\]
To conclude, since $k(t) \to \infty$ a.s., we would only  need to show that, fixed $\ep>0$,  it holds a.s.: 
$m_\ell  >  \ell-2 \ep \ell$ for each $\ell$ large enough.  Indeed, this fact would imply that, for any fixed $\ep>0$, a.s.~$\mic{Y_t^{\xi,\l}}\geq k(t) -2 \ep k(t)$ for $t $ large. Hence, 
\[ 
\liminf_{t\to\infty} \frac{\mic{Y_t^{\xi,\l}}}{t} 
	\geq \liminf_{t\to \infty} (1-2\ep) \frac{k(t)}{t}
	\geq  \liminf_{t\to \infty} (1-2\ep) \frac{k(t)}{T_{k(t)\mic{+1} } }
	=\frac{1-2\ep}{\a} \qquad \text{ a.s.}
\]
Thanks to the arbitrariness of $\ep$ we would  get the thesis.

It remains therefore to prove that 
\begin{equation}\label{miraggio}
\limsup_{\ell \to \infty} \mathds{1} ( m_\ell  \leq  \ell-2 \ep \ell)=0 \qquad \text{ a.s.}
\end{equation}
Take $M>0$ so large that $\bbP(r_{0}^+(\l)+r_{0}^-(\l) \leq  M)> 1-\ep/2$. By the ergodic theorem
\[ 
\lim_{\ell \to \infty} \frac{1}{\ell}\sum_{j=1}^{\mic{\ell}} \mathds{1}( r_{j}^+(\l)+r_{j}^-(\l) \leq M) =  \bbP( r_{0}^+(\l)+r_{0}^-(\l) \leq  M)> 1-\frac{\ep}{2} \qquad 
\text{$\bbP$--a.s. }
\]
As a consequence, for $\bbP$--a.e.~$\mic{\xi}$ (let us say for all $\mic{\xi} \in \cA$) there exists $\ell_0(\mic{\xi})$ such that
\[
\sum_{j=1}^{\mic{\ell}} \mathds{1}( r_{j}^+(\l)+r_{j}^-(\l) \leq M) \geq   \ell (1-\ep) \qquad \forall \ell \geq \ell_0(\mic{\xi})\,.
\] 
This implies that
$\sharp \{ j \in [\ell - 2\ep \ell, \ell]\cap \N\,:\,  r_{j}^+(\l)+r_{j}^-(\l) \leq M \} \geq \ep \ell$, for all $\ell\geq \ell_0(\mic{\xi}) $.
Suppose to know $\mic{\xi}$ (with $\mic{\xi}\in \cA$), $\g$, that $m_\ell \leq \ell -2\ep \ell$ and that  $\ell\geq \ell_0(\mic{\xi})$. Then the time that the continuous time random walk needs to reach
$\ell - 2\ep \ell $  after visiting for the first time $\ell$ is stochastically dominated from below by the sum  of $\ep \ell$ i.i.d.~exponential random variables with mean $1/M$.  Let $A_1,A_2, \dots$ be i.i.d.~exponential random variables with mean $1/M$.   
Then, 
fixed $\d>0$ such that $\d/\ep < 1/M$, by Cram\'er theorem we  have  
\[P\Big( \sum_{j=1}^{\ep \ell} A_j \leq \d \ell \Big) 
	= P\Big( \frac{1}{\ep \ell} \sum_{j=1}^{\ep \ell} A_j \leq {\d}/{\ep}  \Big)\leq \e^{ - c \ep \ell} \qquad \forall \ell \geq \ell_1
\]
for suitable constants  $c, \ell_1 >0$.
This bound combined with the stochastic domination implies that
\[ 
\mic{\P}\otimes Q ( \t_{\ell+1} \leq  \d \ell\,|\,  \mic{\xi} , \g ) 
	\leq  \e^{ - c \ep \ell}\,, 
\]
on the event  $ E_\ell (\mic{\xi},\g):=\{  \mic{\xi} \in \cA, \, m_\ell \leq \ell -2\ep\ell,\, \ell  \geq \ell_0 (\mic{\xi})\lor \ell_1 \}$. 
 Hence, 
\[ 
\P \otimes Q 
\bigl  (\{\t_{\ell+1} \leq  \d \ell\} \cap E_\ell (\mic{\xi},\g)\bigr)
 \leq \e^{ - c \ep \ell}\,.\]
By the Borel--Cantelli lemma we get that there exists a random integer $L$ such that, for $\ell \geq L$, the event  $\{\t_{\ell+1} \leq  \d \ell\}\cap E_\ell (\mic{\xi},\g)$ does not take place. With more elegance, we can write
\[ 
\limsup_{\ell \to \infty} \mathds{1}\bigl(\{ \t_{\ell+1} \leq  \d \ell\}\cap E_\ell (\mic{\xi},\g)\bigr)=0 \qquad \text{a.s.}
\]
which implies that 
\begin{equation}\label{soffro1}
 \limsup_{\ell \to \infty} \mathds{1}( \t_{\ell+1} \leq  \d \ell,\,   m_\ell \leq \ell -2\ep\ell )=0 \qquad \text{a.s.}
\end{equation}
Now observe that, by Lemma \ref{ciriola} and the discussion preceding it, $T_\ell /\ell \to 1/\a$ a.s., so that $T_{\ell+1} /\ell \to 1/\a$ a.s., too.
As a consequence, $\t_{\ell+1}/\ell= (T_{\ell+1} /\ell)-(T_\ell /\ell )\to 0$ a.s.
It then follows that 
\begin{equation}\label{soffro2}
\lim _{\ell \to \infty } \mathds{1} ( \t_{\ell+1} \geq \d \ell)=0 \qquad \text{ a.s. }
\end{equation}
Since 
\[ 
\mathds{1} ( m_\ell \leq \ell -2\ep\ell )
	\leq \mathds{1}( m_\ell \leq \ell -2\ep\ell ,\, \t_{\ell+1} \leq \d\ell)
		+  \mathds{1} ( \t_{\ell+1} \geq \d \ell)\,,
\]
as a byproduct of \eqref{soffro1} and \eqref{soffro2} we conclude that 
 $\limsup_{\ell \to \infty} \mathds{1}(  m_\ell \leq \ell -2\ep\ell )=0 $ a.s. This concludes the proof of \eqref{miraggio}.\end{proof}

\section{Proof of Theorem \ref{baldo}} \label{spagna}
The proof of the CLT is the same as in \cite{Z} with two exceptions: The different $\sigma$-algebra for condition \eqref{soreta} and the unique step in \cite{Z} where \eqref{erica2} is used.

We start from the first issue. Call $\mathcal F_k$ the $\sigma$-algebra generated by $(\theta^i \bar S(\l))_{i\leq k}$. In the proof of Theorem 2.2.1 in \cite{Z} one only needs inequality \eqref{soreta} with $\cG_{-n}$ replaced by $\cF_{-n}$. This is indeed automatically satisfied when \eqref{soreta} holds since $\cF_k\subset \cG_k$ and therefore, for each random variable $Z$, 
\begin{align*}
\E\Big[\E[Z\,|\,\cF_k]^2\Big]
	=\E\Big[\E\big[\E[Z\,|\,\cG_k]\,|\,\cF_k\big]^2\Big]
	\leq \E\Big[\E\big[\E[Z\,|\,\cG_k]^2\,|\,\cF_k\big]\Big]
	=\E\Big[\E[Z\,|\,\cG_k]^2\Big]\,,
\end{align*}
where we have used Schwarz inequality.

\smallskip

We move to the second issue. As in \cite{Z},  we set $f(x,n, \o):= x-v_X(\l)n + h(x, \o) $, $h(0,\o):=0$, $\D(x,\o):= h(x+1, \o)- h (x,\o)$, 
$\D(x,\o):= -1+v_X(\l) \bar S( \theta ^x \o)$, $\bar M_k:= f(\ale{X_k^{\o,\l}}, n, \o)$, $\bar M_0:=0$. Then, \eqref{erica2} is used in \cite{Z} to derive Eq. (2.2.8) there, and in particular that for 
$\bbP$--a.a. $\o$ the rescaled martingale $\bar M_n/\sqrt{n}$
  weakly converges to 
	$ \mathcal N(0,\sigma^2_1(\l))
$ under \ale{$\bbP_0^{\o,\l}$}.  Hence, we need to show that \eqref{erica} suffices to this task. 
To this aim,
 in what follows we write \ale{$P^\l_\o$} for the law on the path space $\O^{\N}$ of  the environment viewed 
 from the walker when the latter starts at the origin in the environment $\o$. \ale{$E_\o^\l$ will denote the associated expectation}.  We set $\bar{\o}:= (\bar{\o}_k)_{k \in \N}$ and $\bar{\o}_k:= \t_{\ale{X^{\o,\l}_k}} \o$.  By working with the law \ale{$P_\o^\l$} we think of \ale{$X_k^{\o,\l}$} as an additive functional of $\bar{\o}$.
 Since moreover $\bbQ_\l$ is mutually absolutely continuous w.r.t. $\bbP$, 
 we only need to prove that for $\bbQ_\l$--a.a. $\o$   \ale{the }
  martingale $\bar M_n/\sqrt{n}$
  weakly converges to 
	$ \mathcal N(0,\sigma^2_1(\l))
$ under \ale{$P_\o^\l$}.  
  This is indeed the same approach used in \cite{Z}, restated with our notation. As there, we apply \cite[Lemma 2.2.4]{Z} with $Z_k$ defined as the martingale difference $Z_k:= \bar{M}_k -\bar{M}_{k-1}$ and $\cF_k$ given by the $\s$--algebra generated by
$\bar{\o}_0$, $\bar{\o}_1$,...,$\bar{\o}_k$.
By 
straightforward  computations 
we get
\begin{equation}\label{samarcanda}
\begin{split}
Z_{k+1}& =v_X(\l)  \mathds{1}\bigl( \ale{X^{\o,\l}_{k+1}=X_k^{\o,\l}}+1\bigr) \bigl(  \bar S (\bar{\o}_k)-1  \bigr)\\
&- 
v_X(\l)  \mathds{1}\bigl( \ale{X^{\o,\l}_{k+1} = X_k^{\o, \l}}-1\bigr) \bigl(  \theta^{-1}\bar S (\bar{\o}_k)+1  \bigr)\,.
\end{split}
\end{equation}

The verification, for $\bbQ_\l$--a.a. $\o$,   of Condition (a) in \cite[Lemma 2.2.4]{Z}  is as in \cite{Z}. The core is to check, for $\bbQ_\l$--a.a. $\o$,  Condition (b) in \cite[Lemma 2.2.4]{Z} using only \eqref{erica} instead of \eqref{erica2}.  We recall that in our context  Condition (b)  states that  $ \frac{1}{n}\sum _{k=1}^{n} \ale{E_\o^\l}\bigl[ Z_{k+1}^2 \mathds{1} (\ale{ |Z_{k+1}|}>\d \sqrt{n} ) \bigr]$ converges to $0$ as $n\to \infty$, given $\d>0$. By Markov's inequality, we only need to show that 
\begin{equation}\label{sirenetta}
 \lim _{n\to \infty} n^{-\frac{\ep }{2}} \frac{1}{n}\sum _{k=1}^{n} \ale{E_\o^\l}\bigl[ \ale{|Z_{k+1}|}^{2+\ep} \bigr] =0 \,.
\end{equation}
Due to \eqref{samarcanda}, $\ale{E_\o^\l}\bigl[ Z_{k+1}^{2+\ep} \bigr] = v_X(\l)^{2+\ep} \ale{E^\l_\o}[ f_1 (\bar{\o}_k)+ f_2(\bar{\o}_k)]$, where $f_1(\o):=  \bigl(  \bar S (\o )-1  \bigr)^{2+\ep}\o_0^+$ and $f_2(\o):=  \bigl(  \theta^{-1}\bar S (\o)+1  \bigr)^{2+\ep} \o_0^{-}$. Due to \eqref{erica} the nonnegative functions $f_1,f_2$ are in $L^1(\bbQ_\l)$. Hence, we  get \eqref{sirenetta} for $\bbQ_\l$--a.a. $\o$ by Lemma \ref{befana100} below.

\begin{Lemma}\label{befana100}
Given a nonnegative function $f\in L^1(\bbQ_\l)$, for \ale{$\bbQ_\l$}--a.a. $\o$ it holds
\begin{equation}
 \lim _{n\to \infty} n^{-\frac{\ep }{2}} \frac{1}{n}\sum _{k=1}^{n} \ale{E^\l_\o}\bigl[f(\bar{\o}_k)  \bigr] =0 \,.
\end{equation}
\end{Lemma}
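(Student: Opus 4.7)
The plan is to identify the quantity in the lemma as the Ces\`aro average of the non-negative $L^1(\bbQ_\l)$ function $f$ under a positive Markov operator preserving $\bbQ_\l$, and then invoke a maximal ergodic inequality to get $\bbQ_\l$-a.s.\ boundedness, from which the stated conclusion is immediate.

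More concretely, I recall from the beginning of Section~\ref{caniefigli} (following \cite{Z}) that $\bbQ_\l$ is an invariant probability measure for the Markov chain $(\bar\o_k)_{k\geq 0}$ giving the environment viewed from the perturbed walker. Let $P$ denote its one-step transition operator, so that $E_\o^\l[f(\bar\o_k)] = (P^k f)(\o)$ for $\bbQ_\l$-a.e.\ $\o$, and set
\begin{align*}
g_n(\o) \;:=\; \frac{1}{n}\sum_{k=1}^n E_\o^\l[f(\bar\o_k)] \;=\; \frac{1}{n}\sum_{k=1}^n (P^k f)(\o).
\end{align*}
Since $\bbQ_\l$ is a probability measure and $P$ is a positive Markov operator with $\bbQ_\l P = \bbQ_\l$, the Hopf--Dunford--Schwartz maximal ergodic inequality applies and gives, for every $A>0$,
\begin{align*}
\bbQ_\l\Bigl(\sup_{n\geq 1} g_n > A\Bigr) \;\leq\; \frac{\bbE_{\bbQ_\l}[f]}{A}.
\end{align*}
Letting $A \to \infty$ yields $\sup_{n\geq 1} g_n(\o) < \infty$ for $\bbQ_\l$-a.e.\ $\o$, which trivially implies $n^{-\ep/2} g_n(\o) \to 0$ for $\bbQ_\l$-a.e.\ $\o$, completing the proof.

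The only real issue is the invocation of the maximal inequality, but it applies off the shelf here: no ergodicity of $P$ is needed, only the invariance of $\bbQ_\l$ (already established earlier in the paper) and the fact that $\bbQ_\l$ is a probability measure. As a slightly stronger alternative, the pointwise ergodic theorem for Markov operators (Chacon--Ornstein / Hopf) directly gives that $g_n$ converges $\bbQ_\l$-a.s.\ to a finite $L^1(\bbQ_\l)$ limit, which in particular forces boundedness. A naive attempt based on applying Markov's inequality to each $g_n$ separately (using only $\bbE_{\bbQ_\l}[g_n] = \bbE_{\bbQ_\l}[f]$) combined with Borel--Cantelli yields at best $g_n = o(n^{1+\delta})$ $\bbQ_\l$-a.s.\ for arbitrary $\delta>0$, which is insufficient when $\ep$ is small; controlling the supremum over $n$ at once is what makes the ergodic-theoretic input essential.
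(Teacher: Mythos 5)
Your proof is correct, but it takes a genuinely different route from the paper's. The paper argues by hand: it sets $F_n(\bar\o):=n^{-1}\sum_{k=1}^n f(\bar\o_k)$, notes that $\|F_n\|_{L^1(\bbQ_\l\otimes P^\l_\o)}$ stays bounded by the $L^1$ Birkhoff theorem applied to the stationary ergodic annealed path measure $\bbQ_\l\otimes P^\l_\o$ (from \cite[Cor.~2.1.25]{Z}), applies Markov's inequality with threshold $n^\g$ ($0<\g<\ep/2$) and Borel--Cantelli along the sparse subsequence $n_j=j^\a$ with $\a\g>1$, and then interpolates between consecutive $n_j$'s using the nonnegativity of $f$ (so that $E^\l_\o[F_n]\le \tfrac{n_{j+1}}{n_j}E^\l_\o[F_{n_{j+1}}]$). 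This yields $E^\l_\o[F_n]=O(n^\g)$ a.s.\ for every $\g>0$, which suffices since $\g<\ep/2$ — note that this subsequence trick is exactly what rescues the ``naive Markov plus Borel--Cantelli'' strategy you dismiss at the end of your proposal. Your route instead identifies $E^\l_\o[f(\bar\o_k)]=(P^kf)(\o)$ for the transition operator $P$ of the environment seen from the walker, which is a positive contraction on $L^1(\bbQ_\l)$ and $L^\infty$ with $\bbQ_\l$ invariant, and invokes the Hopf--Dunford--Schwartz weak $(1,1)$ maximal inequality to get $\sup_n \frac1n\sum_{k=1}^n(P^kf)(\o)<\infty$ for $\bbQ_\l$-a.e.\ $\o$. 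This is a legitimate off-the-shelf application (no ergodicity needed, only invariance, which the paper has already established), it is shorter, and it gives a strictly stronger conclusion (a.s.\ boundedness of the averaged quantity rather than $n^{o(1)}$ growth). The trade-off is that it imports the maximal ergodic machinery for Markov operators, whereas the paper's argument uses only Markov's inequality, Borel--Cantelli and the $L^1$ ergodic theorem it already cites. Both proofs are complete; yours is cleaner if one is willing to cite Dunford--Schwartz.
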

\begin{proof}
We fix positive numbers $\a,\g$ such that $\g< \ep/2$ and $\a \g >1$. We define $A_n$ as
\[ A_n:=\Big \{\o \in \O\,:\,  \ale{E^\l_\o}\bigl[ F_n \bigr]> n^\g\Big \}\,,\qquad F_n( \bar \o):= n^{-1} \sum_{k=1}^n f( \bar \o_k)\,.
\]
By Markov's inequality we have 
\begin{equation}\label{agliari}
 \bbQ_\l(A_n) \leq n^{-\g} \bbQ_\l\Big[   \ale{E^\l_\o}\bigl[ F_n \bigr]\Big]= n^{-\g} \| F_n \|_{L^1( \bbQ_\l \otimes\ale{ P^\l_\o})}  \,.
\end{equation}
Due to \cite[Corollary 2.1.25]{Z} the measure $ \bbQ_\l \otimes \ale{P^\l_\o}$ on $\O^{\N}$ is stationary and ergodic (w.r.t. time--shifts), hence by the $L^1$--Birkhoff ergodic theorem $F_n$ converges to $ \bbQ_\l[ \ale{ E^\l_\o}(f)]$ in $L^1( \bbQ_\l \otimes \ale{P^\l_\o})$ (here we use that $f\in L^1(\bbQ_\l)$). This automatically implies the convergence of the $L^1$--norms. As a byproduct with  \eqref{agliari} we get that 
$ \bbQ_\l(A_n) \leq C n^{-\g}
$ for some $n$--independent positive constant $C$. Setting now $n_j:= j^\a$, since $\a\g>1$ and therefore  $\sum_{j=1}^\infty n_j ^{-\g}<\infty$, by Borel--Cantelli lemma we conclude that for $\bbQ_\l$--a.a. $\o$ it holds $\o \not \in A_{n_j}$ for $j \geq j_0(\o)$. Hence, 
for $\bbQ_\l$--a.a. $\o$,  it holds $\ale{E^\l_\o}[ F_{n_j}] \leq n_j ^{\g}$ for $j \geq j_0(\o)$. Take such an environment $\o$ and take $n \geq n_{j_0(\o)}$. Then there exists $j\geq j_0(\o)$ such that $j^\a=n_j \leq n < n_{j+1}=(j+1)^\a$. Using that $f\geq 0$, for some $n$--independent constant $C'>0$ we can bound 
\begin{equation}
\ale{E^\l_\o}\bigl[ F_n\bigr] \leq \frac{n_{j+1}}{n_j} \ale{E_\o^\l} \bigl[ F_{n_{j+1} }\bigr]\leq  \frac{n_{j+1}}{n_j}  n_{j+1}^{\g} \leq n^{\g}  \frac{n_{j+1}^{1+\g} }{n_j^{1+\g}} \leq C' n^\g\,.
\end{equation}
Since $ \g<\ep/2$ and  $n^{-\frac{\ep }{2}} \frac{1}{n}\sum _{k=1}^{n} \ale{E_\o^\l}\bigl[f(\bar{\o}_k)  \bigr] = n^{-\frac{\ep }{2}}  F_n (\bar \o)$, we get the thesis.
\end{proof}


\section{Proof of Theorem \ref{suppongo}}\label{larome}

  \ale{ Assumption \ref{uffina} is satisfied due  to Remark \ref{fieno}.
As already observed, since the conductances are i.i.d., also Assumption \ref{jesus}  is satisfied, i.e. the environment is invariant under reflection.  As a consequence, the law of $(X^{\o, \l}_n)_{n\geq 0}$ under $\bbP \otimes P_0^{\o,\l}$ equals the law of $(-X^{\o, -\l}_n)_{n\geq 0}$ under $\bbP \otimes P_0^{\o,-\l}$. In particular, it holds $v_X(-\l)= - v_X(\l)$  and,  if the annealed CLT \eqref{diluvio} holds for  $\l >0$, then the same formula \eqref{diluvio} holds by replacing $\l $ with $-\l$ and taking $\s^2(-\l):= \s^2(\l)$.  It remains therefore to prove the annealed CLT and identity  \eqref{rey} for $\l>0 $. From now on we restrict to $\l>0$.
 To get the annealed CLT, due to Theorem \ref{baldo} and since $\bbE[\bar{S}(\l)]<\infty$, we only need to verify  Assumption \ref{gatto}   with $\cG_k$ being the $\sigma$-algebra generated by $(\rho_i:\,i\leq k)$. 
To this aim we first observe that, since 
 $\rho_i=\frac{c_{i-1}}{c_i}$,} we have
\begin{align*}
U=\sum_{i=0}^\infty \frac{c_{-i-1}}{c_0}\e^{-2\l(i+1)} 
	\qquad\mbox{and}\qquad
V=\sum_{i=1}^\infty \frac{c_{0}}{c_i}\e^{-2\l i} \,.
\end{align*}
Let $i\geq 0$ and $j\geq 1$. Since $\rho_0\cdots \rho_{-i}=c_{-i-1}/c_0$ and $\rho_1\cdots\rho_j=c_0/c_j$, we have the identities $\E[\rho_0^{2+\ep}\cdots \rho_{-i}^{2+\ep}]=\E[c_{0}^{2+\ep}]\E[1/c_0^{2+\ep}]$ and
$\E[\rho_0^{2+\ep}\cdots \rho_{-i}^{2+\ep}\rho_1\cdots\rho_j]=\E[c_{0}^{2+\ep}]\E[1/c_0^{1+\ep}]\E[1/c_0]$ and $\E[\rho_0^{2}\cdots \rho_{-i}^{2}]=\E[c_{0}^{2}]\E[1/c_0^{2}]$. \ale{Trivially, $\bbE[U^2]<\infty$. By  Propositions \ref{alternativa1} and \ref{alternativa2}, Assumption \ref{gatto} is therefore verified.}

\smallskip

To compute $\sigma^2(\l)$ we observe that
\begin{align*}
\E[U]=\E[V]
	&=AB\frac{\e^{-2\l}}{1-\e^{-2\l}}\\
\E[U^2]&=D\E\Big[\sum_{i=0}^\infty c_{-i-1}^2\e^{-4\l(i+1)}+\sum_{i\not = j=0}^\infty c_{-i-1}c_{-j-1} \e^{-2\l(i+1)}\e^{-2\l(j+1)}\Big]\\
	&=CD \frac{\e^{-4\l}}{1-\e^{-4\l}}+A^2D\Big(\frac{\e^{-4\l}}{(1-\e^{-2\l})^2}-\frac{\e^{-4\l}}{(1-\e^{-4\l})}\Big)\\
	&=CD \frac{\e^{-4\l}}{1-\e^{-4\l}}+A^2D\frac{2\e^{-6\l}}{(1-\e^{-2\l})(1-\e^{-4\l})}\\
\E[VU]
	&=AB\frac{\e^{-4\l}}{(1-\e^{-2\l})^2}\\
\E[VU^2]
	&=B\E[V/c_0]\E[(c_0U)^2]=B^2\frac{\e^{-2\l}}{1-\e^{-2\l}}\Big(C\frac{\e^{-4\l}}{1-\e^{-4\l}}+A^2\frac{2\e^{-6\l}}{(1-\e^{-2\l})(1-\e^{-4\l})}\Big)
\end{align*}
From the above computations and \ale{\eqref{coldiretti1} } we  get
\begin{align*}
\sigma_1^2(\l)
	&=\frac{4}{(1+2AB\frac{\e^{-2\l}}{1-\e^{-2\l}})^3}\times\\
	&\times\Big[ \frac{CD\e^{-4\l}}{1-\e^{-4\l}}+\frac{2\e^{-6\l}(A^2D+B^2C)}{(1-\e^{-2\l})(1-\e^{-4\l})}+\frac{4A^2B^2\e^{-8\l}}{(1-\e^{-2\l})^2(1-\e^{-4\l})}+\frac{AB\e^{-2\l}}{1-\e^{-2\l}}+\frac{2AB\e^{-4\l}}{(1-\e^{-2\l})^2}\Big]\,.
\end{align*}
Equivalently, we have
\begin{equation}\label{rey1}
\begin{split}
\sigma_1^2(\l)
	=\frac{4(\e^{2\l}-1)^3}{(\e^{2\l}-1+2AB)^3}\Big[ & \frac{CD}{\e^{4\l}-1}+\frac{2(A^2D+B^2C)}{(\e^{2\l}-1)(\e^{4\l}-1)}\\
	&+\frac{4A^2B^2}{(\e^{2\l}-1)^2(\e^{4\l}-1)}+\frac{AB}{\e^{2\l}-1}+\frac{2AB}{(\e^{2\l}-1)^2}\Big]\,.
\end{split}
\end{equation}
For $\sigma_2^2(\l)$ we need to calculate $\E[U\theta^n U]$ for $n\geq 1$. We first take  $n\geq 2$.  In this case we can write
\begin{equation}\label{sugar}
\begin{split}
\E[U\theta^n U]
	&=\E\Big[U\sum_{j=0}^{n-2} \frac{c_{n-j-1}}{c_n}\e^{-2\l(j+1)}\Big]
		+\E\Big[U\frac{c_{0}}{c_n}\e^{-2\l n}\Big]
		+\E\Big[U\sum_{j=n}^{\infty} \frac{c_{n-j-1}}{c_n}\e^{-2\l(j+1)}\Big]\\
	&=:C_1+C_2+C_3\,.
\end{split}
\end{equation}
The three terms are calculated as
\begin{align}
C_1
	&=\E[U]AB\sum_{j=0}^{n-2} \e^{-2\l(j+1)}
	=A^2B^2\frac{\e^{-2\l}}{1-\e^{-2\l}}\Big(\frac{\e^{-2\l}-\e^{-2\l n}}{1-\e^{-2\l}}\Big)\nonumber\\
	&
	=\E[U]^2-A^2B^2\frac{\e^{-2\l}}{(1-\e^{-2\l})^2}\e^{-2\l n} \label{cinico} \\
C_2
	&=AB\frac{\e^{-2\l}}{1-\e^{-2\l}}\e^{-2\l n}\nonumber \\
C_3
	&= B^2\e^{-2\l n}\E\Big[\sum_{i=0}^\infty c_{-i-1} \e^{-2\l(i+1)}\sum_{j=0}^{\infty} {c_{-j-1}}\e^{-2\l(j+1)}\Big]=B^2\e^{-2\l n}\frac{\E[U^2]}{D}\,.\nonumber
\end{align}
In the case $n=1$ \eqref{sugar} is again valid with the convention that $C_1:=0$. 
On the other hand, for $n=1$, the expression in  \eqref{cinico} is zero, hence the above formulas for $C_1, C_2, C_3$ are valid also in the case $n=1$.
Hence we can calculate
\begin{align*}
& \sum_{n\geq 1}\left(\E[U\theta^n U]-\E[U]^2\right)
	=\Big(-A^2B^2\frac{\e^{-2\l}}{(1-\e^{-2\l})^2}+
	AB\frac{\e^{-2\l}}{1-\e^{-2\l}}+B^2\frac{\E[U^2]}{D}
	\Big)
	\sum_{n=1}^\infty\e^{-2\l n}\\
	&=-\frac{A^2B^2\e^{-4\l}}{(1-\e^{-2\l})^3}+
	\frac{AB\e^{-4\l}}{(1-\e^{-2\l})^2}+ \frac{B^2C\e^{-6\l}}{(1-\e^{-4\l})(1-\e^{-2\l})}+\frac{2A^2B^2\e^{-8\l}}{(1-\e^{-2\l})^2(1-\e^{-4\l})}
\end{align*}
On the other hand, by the above computations of $E[U]$, $E[U^2]$, we have
\[
E[U^2]-E[U]^2=
CD \frac{\e^{-4\l}}{1-\e^{-4\l}}+\frac{2 A^2D \e^{-6\l}}{(1-\e^{-2\l})(1-\e^{-4\l})}-A^2B^2\frac{\e^{-4\l}}{(1-\e^{-2\l})^2}\,.
\]
Due to the above identities and \eqref{coldiretti2} we have 
\begin{equation}\label{rey2}
\begin{split}
v(\l)\sigma_{2}^2(\l)
	=&\frac{4(\e^{2\l}-1)^3}{(\e^{2\l}-1+2AB)^3}
	\Big[ \frac{CD}{\e^{4\l}-1}+\frac{2A^2D}{(\e^{2\l}-1)(\e^{4\l}-1)}-\frac{A^2B^2}{(\e^{2\l}-1)^2}\\
	&-\frac{2A^2B^2\e^{2\l}}{(e^{2\l}-1)^3}+
	\frac{2 AB }{(\e^{2\l}-1)^2}+ \frac{2 B^2C }{(\e^{4\l}-1)(\e^{2\l}-1)}+\frac{4A^2B^2 }{(\e^{2\l}-1)^2(\e^{4\l}-1)}\Big]\,.
\end{split}
\end{equation}
\ale{By   \eqref{coeff_diff}, summing the expressions \eqref{rey1} and \eqref{rey2},  we get $\s^2(\l)$ and in particular   \eqref{rey}.}

\bigskip

\noindent
{\bf Acknowledgements}. We thank P. Mathieu for useful discussions.

\end{document}